\documentclass[11pt,a4paper,leqno]{amsart}

\title[Lagrangian distributions and Shubin FIOs]
{Lagrangian distributions and Fourier integral operators with quadratic phase functions and Shubin amplitudes}

\author[M. Cappiello]{Marco Cappiello}

\address{Department of Mathematics, University of Torino, Via Carlo Alberto 10, 10123 Torino, Italy.}

\email{marco.cappiello[AT]unito.it}

\author[R. Schulz]{Ren\'e Schulz}

\address{Leibniz Universit\"at Hannover, Institut f\"ur Analysis, Welfenplatz 1, D--30167 Hannover, Germany}

\email{rschulz[AT]math.uni-hannover.de}

\author[P. Wahlberg]{Patrik Wahlberg}

\address{Department of Mathematics, Linn{\ae}us University, SE--351 95 V\"axj\"o, Sweden}

\email{patrik.wahlberg[AT]lnu.se}

\usepackage{tikz,xifthen}

\usepackage{latexsym}
\usepackage[a4paper]{geometry}
\usepackage{amsmath}
\usepackage{amssymb}
\usepackage{amsthm}
\usepackage{bbm}
\usepackage{amsfonts}
\usepackage{mathrsfs}
\usepackage{calc}
\usepackage{cite}
\usepackage{color}
\usepackage{graphicx}
\usepackage{enumerate}


\numberwithin{equation}{section}          

\newtheorem{thm}{Theorem}
\numberwithin{thm}{section}

\newcommand{\rubrik}{}
\newtheorem{prop}[thm]{Proposition}
\newtheorem{cor}[thm]{Corollary}
\newtheorem{lem}[thm]{Lemma}

\theoremstyle{definition}

\newtheorem{defn}[thm]{Definition}
\newtheorem{example}[thm]{Example}

\theoremstyle{remark}

\newtheorem{rem}[thm]{Remark}              


\newcommand{\Ran}{\operatorname{Ran}}
\newcommand{\Ker}{\operatorname{Ker}}

\newcommand{\pdd}[2] {\partial_{#1} ^{#2}}

\newcommand{\ro}{\mathbb R}
\newcommand{\no}{\mathbb N}
\newcommand{\rr}[1]{\mathbb R^{#1}}
\newcommand{\nn}[1]{\mathbb N^{#1}}

\newcommand{\co}{\mathbb C}

\newcommand{\dd}{\mathrm {d}}

\newcommand{\ep}{\varepsilon}
\newcommand{\fy}{\varphi}

\newcommand{\supp}{\operatorname{supp}}

\newcommand{\wpr}{{\text{\footnotesize $\#$}}}

\newcommand{\eabs}[1]{\langle #1\rangle}

\newcommand{\ran}{\operatorname{Ran}}
\newcommand{\Sp}{\operatorname{Sp}}
\newcommand{\GL}{\operatorname{GL}}
\newcommand{\M}{\operatorname{M}}
\newcommand{\On}{\operatorname{O}}
\newcommand{\OP}{\operatorname{OP}}
\newcommand{\Mp}{\operatorname{Mp}}

\newcommand{\cI}{\mathscr{I}}

\newcommand{\dbar}{{{{\ \mathchar'26\mkern-12mu \mathrm d}}}}

\newcommand{\WF}{\mathrm{WF}}

\newcommand{\diag}{\mathrm{diag}}
\newcommand{\dist}{\operatorname{dist}}


\newcommand{\cS}{\mathscr{S}}
\newcommand{\cT}{\mathcal{T}}
\newcommand{\cV}{\mathcal{V}}
\newcommand{\cTp}{\mathcal{T}_{\psi_0}}

\newcommand{\cF}{\mathscr{F}}
\newcommand{\cO}{\mathcal{O}}
\newcommand{\cK}{\mathscr{K}}

\newcommand{\cR}{\mathscr{R}}

\newcommand{\J}{\mathcal{J}}

\def\la{\langle}
\def\ra{\rangle}
\newcommand{\leqs}{\leqslant}
\newcommand{\geqs}{\geqslant}


\begin{document}

\begin{abstract}
We study Fourier integral operators with Shubin amplitudes and quadratic phase functions associated to twisted graph Lagrangians with respect to symplectic matrices.  
We factorize such an operator as the composition of a Weyl pseudodifferential operator and a metaplectic operator and derive 
a characterization of its Schwartz kernel in terms of phase space estimates. 
Extending the conormal distributions in the Shubin calculus, we define an adapted notion of Lagrangian tempered distribution. 
We show that the kernels of Fourier integral operators are identical to Lagrangian distributions with respect to twisted graph Lagrangians. 
\end{abstract}

\keywords{Fourier integral operator, Shubin amplitude, FBI transform, phase space analysis, Lagrangian distribution.}
\subjclass[2010]{35A22, 35S30, 46F05, 46F12.}

\maketitle

\section{Introduction}

Lagrangian distributions were introduced by H\"ormander \cite[Vol.~IV]{Hormander0} as a framework for a global theory of Fourier integral operators (FIOs). 
FIOs are defined as operators whose Schwartz kernel is a Lagrangian distribution associated to a canonical relation. 
Many of the properties of FIOs can be deduced from the study of their kernels. 
A special case of Lagrangian distributions are the conormal distributions, cf. \cite[Vol.~III]{Hormander0}, which include the kernels of pseudodifferential operators. 
Lagrangian and conormal distributions are defined in terms of local Besov norm estimates which are required to be preserved under the action of certain pseudodifferential operators. 
These estimates reflect properties of the amplitudes of the operators, which are in the classical setting the H\"ormander symbols. 

In this paper we consider another fundamental class of operators in the theory of partial differential equations, the so called Shubin class \cite{Shubin1}. 
A Shubin symbol $a \in \Gamma^m_\rho (\rr {2d})$, with $m \in \ro$ and $0 \leqs \rho \leqs 1$, satisfies the estimates
\begin{equation*}
\left| \partial_x^\alpha \partial_\xi^\beta a (x,\xi) \right| \lesssim (1+|x|+|\xi|)^{m-\rho|\alpha+\beta|}, \qquad (x,\xi) \in T^* \rr d, \alpha,\beta \in \nn d. 
\end{equation*}
Shubin symbols for pseudodifferential operators are interesting not least since they encompass the symbol $a(x,\xi) = |x|^2+|\xi|^2$ of the harmonic oscillator. 
The Shubin symbols behave isotropically on the phase space $T^* \rr d$, 
which distinguishes them from the H\"ormander symbols. 
The Shubin class has been intensively studied by many authors, e.g. \cite{Asada1, BBR, CGR, CN, CRT, Helffer1, HR1, Hormander1, Nicola1, Rodino1, PRW1, SW2, Shubin1}.  

Concerning FIOs with Shubin amplitudes, the main contributions include Asada and Fujiwara \cite{Asada1} and Helffer and Robert \cite{Helffer1, HR1}, who developed the calculus and the spectral theory, and gave applications to PDEs.
They used real phase functions that are generalizations of quadratic forms, with a prescribed condition of non-degeneracy. 
Such phase functions deviate from H\"ormander FIO phase functions that are homogeneous of degree one in the covariable. 
More recently Cordero et al. \cite{Cordero2,CGNR, CNR} and Tataru \cite{Tataru} have contributed to the field of FIOs with quadratic type phase functions, in the former case using amplitudes from modulation spaces rather than Shubin type amplitudes. 

A theory of conormal and Lagrangian distributions for FIOs with Shubin amplitudes and quadratic phase functions, parallel to H\"orman\-der's theory \cite[Vol.~IV]{Hormander0}, and reflecting the peculiar properties of the kernels of these operators, is still missing in the literature. 

In \cite{Cappiello2} we started to fill this gap by defining the space of $\Gamma$-conormal tempered distributions, adapted to Shubin pseudodifferential operators. 
The definition concerns estimates of certain differential operators acting on a Fourier--Bros--Iagolnitzer (FBI) transform of the distribution. 
Inspired by \cite[Chapter~18.2]{Hormander0} we showed that the kernels of Shubin pseudodifferential operators are exactly $\Gamma$-conormal distributions with respect to the diagonal subspace in $\rr d \times \rr d$, cf. \cite[Example~5.2]{Cappiello2}. 

In the present paper we extend the $\Gamma$-conormal to $\Gamma$-Lagrangian distributions and corresponding FIOs. 
A main result extends \cite[Example~5.2]{Cappiello2} from pseudodifferential operators to FIOs and reads as follows. 
The space of kernels of FIOs with Shubin amplitude and quadratic phase function associated with a twisted graph Lagrangian with respect to a symplectic matrix, is identical to the space of $\Gamma$-Lagrangian distributions on $\rr {2d}$ with respect to the same twisted graph Lagrangian. 

Conceptually this result is a Shubin version of a fundamental result for classical FIOs with phase function that is homogeneous  of degree one in the covariable, cf. \cite[Chapter~25]{Hormander0}. 
Indeed FIOs are in \cite[Definition~25.2.1]{Hormander0} defined as operators whose kernel are Lagrangian 
with respect to a twisted canonical relation. 
Lagrangian distributions are shown to be locally representable as oscillatory integrals, and vice versa. 
Our approach is the opposite: We define Shubin FIOs using oscillatory integrals and then we prove that their kernels are $\Gamma$-Lagrangian distributions. 
But the conclusion that the kernels of FIOs are exactly Lagrangian distributions remains the same. 

The key tools in our approach are an FBI type transform, used already in \cite{Cappiello2}, and metaplectic operators. 
The idea to study estimates in the phase space of an FBI transform is suggested by the isotropic behavior of the amplitudes, 
and by analogous estimates proved for similar operators, cf. \cite{Tataru}. 
This approach leads us to restrict to quadratic phase functions whose associated Lagrangian is a twisted graph Lagrangian in $T^* \rr {2d}$ with respect to a symplectic matrix. 
Under this restriction the calculus of the FIOs turns out to be contained in the analysis in \cite{Asada1, Helffer1}. 
However, we refine those calculi with respect to behavior under composition. 
In our composition result Proposition \ref{prop:composition} we obtain homomorphism with respect to the symplectic matrices associated to the phase functions. 

This feature turns out to have many consequences. 
The most important consequence is the factorization result Theorem \ref{thm:repFIO} which says that an FIO can be factorized as the product of a metaplectic operator corresponding to the symplectic matrix defining the phase function, and a Shubin pseudodifferential operator. 
A further consequence are phase space estimates of the FBI transform of the kernels of FIOs. 

The class of FIOs we study is closed under composition and adjoint, and contains the metaplectic group. 
Our composition result Proposition \ref{prop:composition} generalizes the particular case of H\"ormander's composition theorem \cite[Proposition~5.9]{Hormander2} when the phase functions are real. 
We extend this special case in \cite{Hormander2} to non-trivial amplitudes. 
This approach is quite different from the techniques used in \cite{Asada1,Helffer1}. 

In the recent paper \cite{Cappiello3} we apply the results from this paper to initial value Cauchy problems for Schr\"odinger type evolution equations with Hamiltonian given by the sum of a real homogeneous quadratic form and a pseudodifferential perturbation from  the Shubin class.

The paper is organized as follows. 
In Section \ref{sec:prelim} 
we recall an FBI type transform, and some basic facts on Shubin pseudodifferential operators and metaplectic operators. 
Then we study oscillatory integrals with Shubin amplitudes and quadratic real-valued phase functions in Section \ref{sec:oscint}. 
In Section \ref{sec:FIO} we define the FIOs that we study, and we compare our assumptions to \cite{Asada1, Helffer1}. 
We show that FIOs are closed under composition, which leads to 
the central result that every FIO can be factored as the composition of a metaplectic operator and a Weyl pseudodifferential operator of Shubin type. 
Section \ref{sec:phasechar} is devoted to phase space analysis of kernels of FIOs in terms of estimates on the FBI transform. We define $\Gamma$-Lagrangian distributions in the Shubin framework in Section \ref{sec:Lagrangian} and 
we study the microlocal properties of these distributions, the action of FIOs on them, 
and phase space estimates of the FBI transform. 
In Section \ref{sec:kernelLagrangian} we prove that the Schwartz kernels of the FIOs are identical to the $\Gamma$-Lagrangian distributions associated with the twisted graph Lagrangian.

\section{Preliminaries}\label{sec:prelim}

\subsection*{Basic notation}

An open ball in $\rr d$ is denoted $B_r(y) = \{ x \in \rr d: \ |x-y| < r \} \subseteq \rr d$ for $y \in \rr d$ and $r>0$, and we write $B_r(0) = B_r$. 
The gradient operator with respect to $x \in \rr d$ is denoted $\nabla_x$, and we write $\nabla_x f(x) = f_x'(x)$. 
We use $\cS(\rr d)$ and $\cS'(\rr d)$ to denote the Schwartz space of rapidly decaying smooth functions, and its dual the tempered distributions, respectively. 
We write 
$(f,g)$ for the sesquilinear pairing, conjugate linear in the second argument, between a distribution $f$ and a test function $g$, as well as the $L^2$-standard scalar product if $f,g \in L^2(\rr d)$.

The symbols $T_{x_0}u(x)=u(x-x_0)$ and $M_\xi u(x) = e^{i \la x,\xi \ra}u(x)$, where $\la \cdot,\cdot \ra$ denotes the inner product on $\rr d$, are used for translation by $x_0\in \rr d$ and modulation by $\xi \in \rr d$, respectively, applied to functions or distributions. For $x\in\rr{d}$ we use $\eabs{x}=\sqrt{1+|x|^2}$. Peetre's inequality is
\begin{equation}\label{eq:Peetre}
\eabs{x+y}^s \leqs C_s \eabs{x}^s\eabs{y}^{|s|}\qquad x,y\in\rr{d}, \quad C_s>0, \quad s \in \ro. 
\end{equation}
We write $\dbar x = (2\pi)^{-d}\dd x$ for the dual Lebesgue measure.
The notation $f (x) \lesssim g(x)$ means $f(x) \leqs C g(x)$ for some $C>0$ for all $x$ in the domain of $f$ and of $g$. 
If $f (x) \lesssim g (x) \lesssim f (x)$ then we write $f (x) \asymp g (x)$. 

The Fourier transform for $f \in \cS(\rr d)$ is normalized as 
\begin{equation*}
\cF f (\xi) = \widehat f (\xi) = (2\pi)^{-d/2} \int_{\rr d} f(x) e^{-i \la x,\xi \ra} \, \dd x 
\end{equation*}
which makes it unitary on $L^2(\rr d)$. 
The partial Fourier transform with respect to a vector variable indexed by $j$ is denoted $\cF_j$. 

The orthogonal projection on a linear subspace $Y \subseteq \rr d$ is denoted $\pi_Y$. 
We denote by $\M_{d_1 \times d_2}( \ro )$ the space of $d_1\times d_2$ matrices with real entries, by $\GL(d,\ro ) \subseteq \M_{d \times d}( \ro )$ the group of invertible matrices, and by $\On(d) \subseteq \GL(d,\ro)$ the subgroup of orthogonal matrices.
The determinant of $A \in \M_{d \times d}( \ro )$ is $|A|$. 
If $f$ is a function on $\rr d$ and $A \in \GL(d,\ro)$ the pullback is denoted $A^* f = f \circ A$.

\subsection*{An integral transform of FBI type} 

The following integral transform has been used extensively in \cite{Cappiello2} and is fundamental also for this article. 
For more information see \cite{Cappiello2}. 
\begin{defn}
Let $u\in \cS^\prime(\rr d)$ and let $g\in \cS(\rr d)\setminus\{0\}$. The transform $u \mapsto \cT_g u$ is 
\begin{equation*}
\cT_g u(x,\xi)=(2\pi)^{-d/2}(u,T_x M_{\xi}g), \quad x, \xi \in \rr d. 
\end{equation*}
\end{defn}

If $u \in \cS(\rr d)$ then $\cT_g u \in \cS(\rr {2d})$ by \cite[Theorem~11.2.5]{Grochenig1}. 
The adjoint $\cT_g^*$ is defined by $(\cT_g^* U, f) = (U, \cT_g f)$ for $U \in \cS'(\rr {2d})$ and $f \in \cS(\rr d)$. 
When $U$ is a polynomially bounded measurable function we write
\begin{equation*}
\cT_g^* U(y) = (2\pi)^{-d/2} \int_{\rr {2d}} U(x,\xi) \, T_{x} M_{\xi} g(y) \, \dd x \, \dd \xi 
\end{equation*}
where the integral is defined weakly so that $(\cT_g^* U, f) = (U, \cT_g f)_{L^2}$ for $f \in \cS(\rr d)$. 

\begin{prop}
\label{prop:Swdchar}
\rm{\cite[Theorem~11.2.3]{Grochenig1}}
Let $u\in\cS'(\rr d)$ and let $g \in \cS(\rr d) \setminus 0$. Then $\cT_g u\in C^\infty(\rr {2d})$ and there exists $N \in \no$ such that %
\begin{equation*}
|\cT_g u(x,\xi)|\lesssim \eabs{(x,\xi)}^{N}, \quad (x,\xi) \in \rr {2d}.
\end{equation*}
We have $u\in \cS(\rr d)$ if and only if for any $N \geqs 0$ 
\begin{equation*}
|\cT_g u(x,\xi)|\lesssim \eabs{(x,\xi)}^{-N}, \quad (x,\xi) \in \rr {2d}. 
\end{equation*}
\end{prop}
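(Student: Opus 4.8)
The plan is to establish the three assertions in turn: smoothness of $\cT_g u$, the polynomial bound, and the characterization of $\cS(\rr d)$ among tempered distributions. The unifying device is that the ``window map''
\begin{equation*}
\Phi \colon \rr {2d} \to \cS(\rr d), \qquad \Phi(x,\xi) = T_x M_\xi g,
\end{equation*}
is smooth for the Fréchet topology of $\cS(\rr d)$, with derivatives of the form
\begin{equation*}
\partial_{x_j}\Phi(x,\xi) = -T_x M_\xi\bigl((\partial_j + i\xi_j)g\bigr), \qquad \partial_{\xi_j}\Phi(x,\xi) = i\,T_x M_\xi\bigl(y\mapsto y_j g(y)\bigr),
\end{equation*}
so that every derivative of $\Phi$ is again $T_x M_\xi$ applied to a fixed Schwartz function, multiplied by a polynomial in $\xi$. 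First I would check, using the Leibniz rule and Peetre's inequality \eqref{eq:Peetre}, that each $\cS(\rr d)$-seminorm of $\partial_x^\alpha \partial_\xi^\beta \Phi(x,\xi)$ is $\lesssim \eabs{(x,\xi)}^{M}$ for some $M = M(\alpha,\beta,g)$. Composing $\Phi$ with $u$, and using that $u \in \cS'(\rr d)$ is continuous on $\cS(\rr d)$ --- i.e.\ $|(u,f)| \lesssim \sum_{|\alpha|,|\beta| \leqs N_0} \sup_y |y^\alpha \partial_y^\beta f(y)|$ for some $N_0$ --- then yields both $\cT_g u \in C^\infty(\rr{2d})$ (differentiation passes through the pairing) and $|\cT_g u(x,\xi)| \lesssim \eabs{(x,\xi)}^{N}$ with $N$ depending only on $N_0$ and $g$.

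For the equivalence, the direction $u \in \cS(\rr d) \Rightarrow$ rapid decay for every $N$ is immediate from $\cT_g u \in \cS(\rr{2d})$, which is exactly \cite[Theorem~11.2.5]{Grochenig1}, already invoked in the text above. For the converse I would use a reconstruction formula: fixing $\gamma \in \cS(\rr d)$ with $(g,\gamma)_{L^2} \neq 0$ (say $\gamma = g$), the first step is the identity
\begin{equation*}
u = c_{g,\gamma}\, \cT_\gamma^*\bigl(\cT_g u\bigr), \qquad c_{g,\gamma} \neq 0,
\end{equation*}
valid in $\cS'(\rr d)$. This is obtained by pairing with $f \in \cS(\rr d)$ and reducing to the Moyal-type orthogonality relation $(\cT_g u, \cT_\gamma f)_{L^2(\rr{2d})} = \widetilde c_{g,\gamma}\,(u,f)$, which in turn follows from Fubini applied to the product of the polynomially bounded function $\cT_g u$ and the Schwartz function $\cT_\gamma f$. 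Granting the decay hypothesis, $|\cT_g u(x,\xi)|\, \eabs{(x,\xi)}^{|\alpha|+|\beta|}$ is integrable over $\rr{2d}$ for every $\alpha,\beta$ once $N$ is chosen large enough; since $(x,\xi)\mapsto T_x M_\xi\gamma$ is $\cS(\rr d)$-valued with polynomially bounded seminorms, the integral $\cT_\gamma^*(\cT_g u) = (2\pi)^{-d/2}\int_{\rr{2d}} \cT_g u(x,\xi)\, T_x M_\xi \gamma\, \dd x\, \dd\xi$ converges absolutely in $\cS(\rr d)$, and moving $y^\alpha \partial_y^\beta$ under the integral gives $\sup_y |y^\alpha \partial_y^\beta \cT_\gamma^*(\cT_g u)(y)| < \infty$. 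Hence $\cT_\gamma^*(\cT_g u) \in \cS(\rr d)$, so $u \in \cS(\rr d)$.

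The main obstacle I anticipate is purely in making the reconstruction formula rigorous at the right level of regularity: a priori $u$ is only tempered, so the integral defining $\cT_\gamma^* \cT_g u$ carries no a priori strong convergence, and the identity $u = c_{g,\gamma}\, \cT_\gamma^* \cT_g u$ must first be proved weakly in $\cS'(\rr d)$ before the decay hypothesis can be fed in to upgrade the convergence to the topology of $\cS(\rr d)$. The remaining bookkeeping --- tracking which seminorms of $g$ and $\gamma$ and which order $N_0$ of $u$ control the constants and the exponent $N$ --- is routine.
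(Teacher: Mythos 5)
This proposition is not proved in the paper: it is recalled verbatim with a citation to Gr\"ochenig's Theorem~11.2.3, so there is no in-paper argument to compare against. Your proposal is a correct self-contained proof, and it follows the standard route that underlies the cited reference.

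A few remarks on the details. Your formulas for $\partial_{x_j}\Phi$ and $\partial_{\xi_j}\Phi$ are correct in the paper's convention $T_xM_\xi g(y)=e^{i\la y-x,\xi\ra}g(y-x)$, and the observation that every iterated derivative of $\Phi$ has the form (polynomial in $(x,\xi)$) times $T_xM_\xi h$ with $h\in\cS(\rr d)$ fixed does yield polynomially bounded Schwartz seminorms via Peetre's inequality; composing with the continuous (conjugate-)linear functional $u$ then gives both the $C^\infty$-regularity of $\cT_g u$ and the polynomial bound, with the exponent $N$ controlled by the order $N_0$ of $u$. The forward implication of the equivalence is indeed immediate from the already quoted Theorem~11.2.5. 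For the converse, the cleanest way to present your argument is to observe that the decay hypothesis makes the $\cS(\rr d)$-valued integral
\begin{equation*}
\cT_g^*\bigl(\cT_g u\bigr) = (2\pi)^{-d/2}\int_{\rr{2d}} \cT_g u(x,\xi)\, T_x M_\xi g\, \dd x\, \dd\xi
\end{equation*}
absolutely convergent in each Schwartz seminorm (this simultaneously justifies differentiating under the integral, gives $C^\infty$-smoothness, and gives the finiteness of each seminorm), and then to invoke $u=\nmm{g}_{L^2}^{-2}\cT_g^*\cT_g u$, which the paper records immediately after the proposition with the same citation. The point you flag as the ``main obstacle'' --- extending the inversion/orthogonality identity from $\cS$ to $\cS'$ --- is real, but it is not a direct Fubini on the product of two functions: the honest argument interchanges the distributional pairing with an absolutely convergent $\cS$-valued integral, or extends by density. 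Since the paper states the identity $\cT_h^*\cT_g u=(h,g)u$ for all $u\in\cS'(\rr d)$, you may as well cite it and skip re-deriving it, after which your argument is complete and correct.
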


The transform $\cT_g$ is closely related to the short-time Fourier transform \cite{Grochenig1}
\begin{equation*}
\cV_g u(x,\xi) = (2\pi)^{-d/2}(u,M_{\xi} T_x g), \quad x, \xi \in \rr d, 
\end{equation*}
namely $\cT_g u(x,\xi) = e^{i \la x, \xi \ra} \cV_{g}u(x,\xi)$.
If $g,h\in \cS(\rr d)$ then
\begin{equation*}
\cT_h^*\cT_g u=(h,g) u, \qquad u \in \cS'(\rr d), 
\end{equation*}
and thus $\|g\|_{L^2}^{-2}\cT_g^*\cT_g u=u$ for $g \in \cS(\rr d) \setminus 0$, cf. \cite{Grochenig1}.

Finally we recall the definition of the Gabor wave front set, cf. \cite{Hormander1,Rodino1}. 

\begin{defn}\label{def:WFG}
If $u \in \cS'(\rr d)$ and $g \in \cS(\rr d) \setminus 0$ then $z_0 \in T^*\rr d  \setminus 0$ satisfies  $z_0 \notin \WF(u)$ if 
there exists an open cone $V \subseteq T^* \rr d \setminus 0$ containing $z_0$, such that for any $N \in \no$ there exists $C_{V,g,N}>0$ such that $|\cT_g u(z)|\leqs C_{V,g,N} \eabs{z}^{-N}$ when $z \in V$.
\end{defn}

The Gabor wave front set is hence a closed conic subset of $T^*\rr d  \setminus 0$. 

\subsection*{Weyl pseudodifferential operators and metaplectic operators} 

We use pseudodifferential operators in the Weyl calculus. 
Such an operator is defined by a symbol $a$ defined on $\rr {2d}$ as 
\begin{equation*}
a^w(x,D) f(x) = \int_{\rr {2d}} e^{i \la x-y, \xi \ra} a\left((x+y)/2,\xi \right) \, f(y) \, \dbar \xi \, \dd y. 
\end{equation*}
We will later use Shubin symbols, but for now it suffices to note that the Weyl quantization extends 
by the Schwartz kernel theorem to $a \in \cS'(\rr {2d})$, and then gives 
rise to a continuous linear operator $a^w(x,D): \cS(\rr d) \to \cS'(\rr d)$. 
The space of Weyl pseudodifferential operators with symbols in a space $U \subseteq \cS'(\rr {2d})$ is denoted $\OP^w U$. 

The Weyl product $a \wpr b$ is the product on the symbol level corresponding to composition of operators,  
\begin{equation*}
(a \wpr b )^w(x,D) = a^w(x,D) b^w(x,D)
\end{equation*}
when the composition is well defined.
The (Schwartz) kernel of the operator $a^w(x,D)$ is
\begin{equation}\label{eq:schwartzkernelpseudo}
K_{a}(x,y)=\int_{\rr d} e^{i \la x-y, \xi \ra} a\left((x+y)/2,\xi \right) \dbar \xi
\end{equation}
interpreted as a partial inverse Fourier transform of $a$, followed by a change of variables, when $a \in \cS'(\rr {2d})$. 

For $a \in \cS'(\rr {2d})$ and $f,g \in \cS(\rr d)$ we have
\begin{equation}\label{eq:wignerweyl}
(a^w(x,D) f,g) = (2 \pi)^{-d/2} (a, W(g,f) ) 
\end{equation}
where 
\begin{equation}\label{eq:wignerdistribution}
W(g,f) (x,\xi) = (2 \pi)^{-d/2} \int_{\rr d} g(x+y/2) \overline{f(x-y/2)} \, e^{- i \la y, \xi \ra} \, \dd y \in \cS(\rr {2d})
\end{equation}
is the Wigner distribution \cite{Folland1,Grochenig1}. 

We view $T^* \rr d$ as a symplectic vector space equipped with the 
canonical symplectic form
\begin{equation}\label{eq:cansympform}
\sigma((x,\xi), (x',\xi')) = \la x' , \xi \ra - \la x, \xi' \ra, \quad (x,\xi), (x',\xi') \in T^* \rr d.
\end{equation}
A Lagrangian (subspace) \cite{Hormander0} is a linear subspace $\Lambda \subseteq T^* \rr d$ of dimension $d$ such that 
\begin{equation*}
\sigma(X,Y) = 0, \quad X,Y \in \Lambda. 
\end{equation*}

The real symplectic group $\Sp(d,\ro)$ is the set of matrices in $\GL(2d,\ro)$ that leaves $\sigma$ invariant. 
To each $\chi \in \Sp(d,\ro)$ is associated an operator $\mu(\chi)$ which is unitary on $L^2(\rr d)$, and determined up to a complex factor of modulus one, such that
\begin{equation}\label{eq:metaplecticoperator}
\mu(\chi)^{-1} a^w(x,D) \, \mu(\chi) = (a \circ \chi)^w(x,D), \quad a \in \cS'(\rr {2d})
\end{equation}
(cf. \cite{Folland1,Hormander0}).
The operators $\mu(\chi)$ are homeomorphisms on $\mathscr S$ and on $\mathscr S'$, 
and are called metaplectic operators.

The metaplectic representation is the mapping $\Sp(d,\ro) \ni \chi \mapsto \mu(\chi)$ which is a homomorphism modulo sign
\begin{equation}\label{eq:metaplechomo}
\mu(\chi_1) \mu(\chi_2) = \pm \mu(\chi_1 \chi_2), \quad \chi_1,\chi_2 \in \Sp(d,\ro). 
\end{equation}
Two ways to overcome the sign ambiguity are to pass to a double-valued representation \cite{Folland1}, or to a representation of the so called two-fold covering group of $\Sp(d,\ro)$. 
The latter group is called the metaplectic group $\Mp(d,\ro)$. 
The two-to-one projection $\pi: \Mp(d,\ro) \rightarrow \Sp(d,\ro)$ is $\mu(\chi)\mapsto \chi$ whose kernel is $\{\pm 1\}$. The sign ambiguity may be fixed (hence it is possible to choose a section of $\pi$) along a continuous path $\ro \ni t\mapsto \chi_t \in \Sp(d,\ro)$. This involves the so called Maslov factor \cite{Leray1}.

Let $\psi_0 = \pi^{-d/4} e^{-|x|^2/2}$, $x \in \rr d$. 
A localization operator \cite{Nicola1} with symbol $a \in \cS'(\rr {2d})$ is defined by 
\begin{equation*}
(A_a u, f) = (a \cTp, \cTp f), \quad u,f \in \cS(\rr d). 
\end{equation*}
We have (cf. \cite[Section~1.7.2]{Nicola1}) $A_a = b^w(x,D)$ where 
\begin{equation*}
b = \pi^{-d} e^{-|\cdot|^2} * a. 
\end{equation*}
%

\section{Oscillatory integrals with respect to quadratic phase functions and Shubin amplitudes}
\label{sec:oscint}

In this section we study oscillatory integrals of the form 
\begin{equation}\label{eq:kernelFIO}
K_{\varphi,a}(x,y) = \int_{\rr N} e^{i \fy(x,y,\theta)} a(x,y,\theta) \, \dd \theta, \quad x,y \in \rr d. 
\end{equation}
They will later be used as kernels of FIOs. 

We make the following assumptions on the phase function $\fy$. 
It is a real-valued quadratic form on $\rr {2d+N}$, 
\begin{equation}\label{eq:pform}
\fy(x,y,\theta) = \la (x,y, \theta), \Phi (x, y, \theta) \ra, \quad x,y \in \rr d, \quad \theta \in \rr N, 
\end{equation}
where $\Phi \in \M_{(2d+N) \times (2d+N)} (\ro)$ is symmetric. 
We decompose $\Phi$ into blocks as
\begin{equation}\label{eq:Phimatrix}
\Phi= \frac1{2} \left(
\begin{array}{cc}
F & L \\
L^t & Q
\end{array}
\right)
\end{equation}
where $F \in \M_{2d \times 2d}(\ro)$, $L \in \M_{2d \times N}(\ro)$ and $Q \in \M_{N \times N}(\ro)$, and where $F$ and $Q$ are symmetric.
Thus 
\begin{equation*}
\fy(x,y,\theta) = \frac{1}{2} \la (x,y), F (x, y) \ra + \la L \theta, (x,y) \ra + \frac{1}{2} \la \theta, Q \theta \ra, \quad (x,y,\theta) \in \rr {2d+N}.
\end{equation*}

We assume the following non-degeneracy condition: 
\begin{equation}\label{eq:fullrank}
\mbox{The submatrix }
\left(
\begin{array}{c}
L \\
Q
\end{array}
\right) \in \M_{(2d+N) \times N}(\ro)
\mbox{ is injective.}
\end{equation}

As example is given by the pseudodifferential operator phase function $\fy(x,y,\xi) $ $= \la x-y, \xi\ra$
where $F=0$, $Q=0$ and
\begin{equation*}
L
=\left(
\begin{array}{c}
I_d \\
-I_d
\end{array}
\right) \in \M_{2d \times d}(\ro).
\end{equation*}

In \eqref{eq:kernelFIO} we assume $N \geqs 0$. 
If $N=0$ then the matrices $L$ and $Q$ do not exist and we interpret the integral \eqref{eq:kernelFIO} as 
\begin{equation}\label{eq:kernelFIOdegen}
K_{\varphi,a}(x,y) = e^{i \fy(x,y)} a(x,y)  \quad x,y \in \rr d. 
\end{equation}

Denote $X=(x,y) \in \rr {2d}$. 
The \emph{critical set} defined by a phase function $\fy$ is the linear subspace
\begin{equation*}
C_\fy = \{ (X,\theta) \in \rr {2d+N}: \  \fy_\theta' (X,\theta) = 0 \} \subseteq \rr {2d+N} 
\end{equation*}
and the associated Lagrangian subspace is 
\begin{equation*}
\Lambda_\fy = \{ (X,\fy_X' (X,\theta)) \in T^* \rr {2d}: \  \fy_\theta' (X,\theta) = 0 \} 
\subseteq T^* \rr {2d}. 
\end{equation*}
Owing to the properties of $\fy$ we have $\dim C_\fy = \dim \Lambda_\fy = 2d$.

The amplitude $a$ in \eqref{eq:kernelFIO} is assumed to be of Shubin type \cite{Shubin1}.
Let $\Omega \subseteq \rr {2d+N}$ be open and let $0 \leqs \rho \leqs 1$.  
The space of Shubin amplitudes of order $m \in \ro$ is denoted $\Gamma_\rho^m(\Omega)$, and 
$a \in \Gamma_\rho^m(\Omega)$ means that $a \in C^\infty(\Omega)$ and 
\begin{equation}\label{eq:shubinestimate}
|\partial_X^\alpha \partial_\theta^\beta a(X,\theta)| 
\lesssim \eabs{(X,\theta)}^{m-\rho|\alpha+\beta|}, \quad (\alpha, \beta) \in \nn {2d +N}, \quad (X,\theta) \in \Omega. 
\end{equation} 
We denote $\Gamma^m(\Omega)=\Gamma_1^m(\Omega)$ 
and $\Gamma_\rho^\infty(\Omega) = \bigcup\limits_{m \in \ro} \Gamma_\rho^m(\Omega)$. 

We will mostly assume $a \in \Gamma_\rho^m(\rr {2d+N})$. 
Occasionally we will discuss a larger space of amplitudes, introduced by Helffer \cite{Helffer1}, that is adapted to a given phase function. 
Consider for $\ep>0$ the open conic set 
\begin{equation}\label{eq:coneneighborhood}
V_{\fy,\ep} = \{ (X,\theta) \in \rr {2d+N}: \ | \fy_\theta' (X,\theta)| < \ep |(X,\theta)| \} \subseteq \rr {2d+N}
\end{equation}
which contains the critical set $C_\fy$. We denote  
\begin{equation}\label{eq:helfferamplitude}
\Gamma^m_{\fy,\rho,\ep}(\rr {2d+N}) = \Gamma_\rho^m (V_{\fy,\ep}) \cap \Gamma^{\infty}_0(\rr {2d+N}),
\end{equation}
see \cite[Section 2.2]{Helffer1}.
In this definition it is required that the amplitudes behave like $\Gamma_\rho^m$ only in a conic neighborhood of the critical set, outside of which a cruder polynomial estimate of the derivatives is sufficient. 
The space $\Gamma^m_{\fy,\rho,\ep}(\rr {2d+N})$ includes symbols of pseudodifferential operators (cf. Remark \ref{rem:pseudohelffer}). 

\begin{rem}
The restriction to quadratic phase functions is crucial in order to obtain estimates in the phase space for the FBI transform of the kernels.
However, the conditions \eqref{eq:pform}, \eqref{eq:Phimatrix}, \eqref{eq:fullrank} combined with Shubin amplitudes are less restrictive than one might think. In fact we can allow a phase function of the form $\fy_r = \fy + r$ where $\fy$ satisfies the assumptions above and $r \in \Gamma^0_\rho(\rr {2d+N})$. Then $e^{ir} \in \Gamma^0_\rho (\rr {2d+N})$, and hence the factor $e^{ir}$ can be absorbed into the amplitude. 
This means that the phase function only has to be a non-degenerate quadratic form modulo an element in $\Gamma^0_\rho (\rr {2d+N})$. 
\end{rem}

We note that the conditions \eqref{eq:pform}, \eqref{eq:Phimatrix}, \eqref{eq:fullrank} on the phase function are neither weaker nor stronger than the conditions in \cite{Asada1}, and the same observation holds for the conditions in \cite{Helffer1}. 
In  \cite{Asada1,Helffer1} the authors deal with phase functions that are more general than quadratic forms, but their conditions of non-degeneracy are stronger than ours. On the other hand, we shall later restrict to phase functions that are associated to the twisted graph Lagrangian of a symplectic matrix. Under this additional condition our phase functions become a subset of the ones considered in \cite{Asada1, Helffer1}, cf. Remark \ref{rem:AH}. 

If $a \in \Gamma_\rho^m(\rr {2d+N})$ with $m < - N$ the integral \eqref{eq:kernelFIO} converges absolutely 
and defines a polynomially bounded function.
Due to the properties of $\fy$ it is possible to give meaning to \eqref{eq:kernelFIO} for any $m \in \ro$. 
To wit, by the regularization procedure described in \cite[Section~5]{Hormander2} and \cite[Section~3]{PRW1}, one extends \eqref{eq:kernelFIO} to $m \in \ro$ obtaining a kernel $K_{\varphi,a} \in \cS'(\rr {2d})$. 

More precisely, first let $a \in \Gamma_\rho^m(\rr {2d+N})$ with $m < -N$ and let $f \in \cS(\rr {2d})$. 
For $1 \leqs j \leqs N$ we have
\begin{align*}
(K_{\varphi,a},f) & = \int_{\rr {2d + N}} e^{i \fy(X,\theta)} a(X,\theta) \, \overline{f(X)} \, \dd \theta \, \dd X \\
& = \int_{\rr {2d +N}} e^{i \fy(X,\theta)} P_j \Big( a(X,\theta) \, \overline{f(X)}  \Big) \, \dd \theta \, \dd X
\end{align*}
where $P_j$ is a first order differential operator of the form
\begin{equation*}
P_j g= \left( 1 + \langle u_j, \nabla_{X,\theta} \rangle + \langle b_j,X \rangle \right) \left( \frac{g}{1-i \theta_j} \right) , \quad u_j \in \rr {2d+N}, \quad b_j \in \rr {2d},  
\end{equation*}
acting on $g \in C^\infty(\rr {2d+N})$. 
Iterating this $k$ times and then over $1 \leqs j \leqs N$ produces
\begin{equation}\label{eq:regularization}
(K_{\fy,a},f)  = \int_{\rr {2d +N}} e^{i \fy(X,\theta)} P \Big( a(X,\theta) \, \overline{f(X)} \Big) \, \dd \theta \, \dd X
\end{equation}
where 
\begin{equation}\label{eq:Pdef}
P = P_1^k P_2^k \cdots P_N^k. 
\end{equation}

For $m \in \ro$ and $a \in \Gamma_\rho^m(\rr {2d+N})$ the integral \eqref{eq:regularization} converges and defines a distribution in $\cS'(\rr {2d})$ provided $k \geqs |m| + 2$, 
since the factors in the denominator $1-i \theta_j$ make the integral with respect to $\theta$ convergent. 
Thus $K_{\fy, a} \in \cS'(\rr {2d})$ is well defined for $a \in \Gamma_\rho^m(\rr {2d+N})$ where $m \in \ro$ is arbitrary, 
and the extension is unique. 
Equivalently we may define $K_{\varphi,a} \in \cS'(\rr {2d})$ for $a \in \Gamma_\rho^m(\rr {2d+N})$ as
\begin{equation*}
(K_{\fy,a},f)  = \lim_{\ep \rightarrow 0+}\int_{\rr {2d + N}} \chi_\ep(\theta) \, e^{i \fy(X,\theta)} a(X,\theta) \, \overline{f(X)} \, \dd \theta \, \dd X, \quad f \in \cS(\rr {2d}), 
\end{equation*}
where $\chi_\ep(\theta) = \chi(\ep \theta)$, $\chi \in \cS(\rr N)$, $\ep>0$ and $\chi(\theta) = 1$ when $|\theta| \leqs 1$.
The latter regularization can be written as
\begin{equation*}
K_{\fy,a}(X)  = \lim_{\ep \rightarrow 0+}\int_{\rr N} \chi_\ep(\theta) \, e^{i \fy(X,\theta)} a(X,\theta) \, \dd \theta, \quad X \in \rr {2d}.
\end{equation*}

In the following we show that the oscillatory integral \eqref{eq:kernelFIO} with $a \in \Gamma_\rho^m(\rr {2d+N})$ may be rewritten with a possibly new amplitude $b \in \Gamma_\rho^m(\rr {2d+n})$ for some $n \in \no$ such that $0 \leqs n \leqs N$ and a possibly new phase function which lacks the term $Q$, cf. \eqref{eq:pform} and \eqref{eq:Phimatrix}. 

\begin{prop}\label{prop:phasereduction}
Suppose $N \geqs 1$, $a \in \Gamma_\rho^m(\rr {2d+N})$ and let $\fy$ be a quadratic phase function defined by a symmetric matrix $\Phi \in \M_{(2d+N) \times (2d+N)} (\ro)$ denoted as in \eqref{eq:Phimatrix} and satisfying \eqref{eq:fullrank}. 
Denote the corresponding Lagrangian by $\Lambda_\fy \subseteq T^* \rr {2d}$. 

Then there exists $n \in \no$ such that $0 \leqs n \leqs N$, and \eqref{eq:kernelFIO} can be written as the oscillatory integral 
\begin{equation*}
K_{\varphi_0, b}(X) = \int_{\rr n} e^{i \fy_0(X,\theta)} b(X,\theta) \, \dd \theta, \quad X \in \rr {2 d},  
\end{equation*}
where $b \in \Gamma_\rho^m(\rr {2d+n})$, and where the new phase function $\fy_0$ is defined by a symmetric matrix $\Phi_0 \in \M_{(2d+n) \times (2d+n)} (\ro)$ denoted as in \eqref{eq:Phimatrix} with $Q=0$ and satisfying \eqref{eq:fullrank}. 
Furthermore $\fy_0$ parametrizes the same Lagrangian as $\fy$, that is $\Lambda_{\fy_0}=\Lambda_\fy$. 
\end{prop}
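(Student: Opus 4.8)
The plan is to reduce the number of phase variables by diagonalizing the block $Q$ and absorbing the non-degenerate directions into the amplitude by stationary-phase-type Gaussian integration. First I would decompose $\rr N = V_0 \oplus V_1$ according to the kernel and the range of $Q$: choose an orthogonal matrix $T \in \On(N)$ so that $Q$ becomes block-diagonal with an invertible (symmetric) block $\widetilde Q \in \M_{n' \times n'}(\ro)$ on the range and zero on $\ker Q$, where $n' = \rank Q$. Substituting $\theta = T\theta'$ in \eqref{eq:kernelFIO} is an orthogonal change of variables with unit Jacobian, it preserves the Shubin class $\Gamma_\rho^m$ since $\eabs{(X,T\theta')} \asymp \eabs{(X,\theta')}$, and it transforms $L$ into $LT$ and leaves $F$ unchanged. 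After this reduction the phase is $\frac12\la(X),F(X)\ra + \la L_0\theta_0',(X)\ra + \la L_1\theta_1',(X)\ra + \frac12 \la \theta_1', \widetilde Q \theta_1'\ra$, with $\theta' = (\theta_0',\theta_1')$, $\theta_0'\in\rr{N-n'}$, $\theta_1'\in\rr{n'}$.

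Next I would complete the square in $\theta_1'$: write $\frac12\la\theta_1',\widetilde Q\theta_1'\ra + \la L_1\theta_1',X\ra = \frac12\la\widetilde Q(\theta_1' + \widetilde Q^{-1}L_1^t X), \theta_1' + \widetilde Q^{-1}L_1^t X\ra - \frac12\la \widetilde Q^{-1}L_1^t X, L_1^t X\ra$. The last term is a quadratic form in $X$ alone and modifies $F$ to a new symmetric $F_0 = F - L_1\widetilde Q^{-1}L_1^t$. The shift $\theta_1' \mapsto \theta_1' + \widetilde Q^{-1}L_1^t X$ is, for each fixed $X$, a translation, but it is $X$-dependent and affine, so it is a global linear change of variables on $(X,\theta_1')$ with unit Jacobian; one checks directly it again preserves $\Gamma_\rho^m$ because $\eabs{(X,\theta_1' + \widetilde Q^{-1}L_1^t X)} \asymp \eabs{(X,\theta_1')}$ by Peetre's inequality \eqref{eq:Peetre}. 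At this point the integral over $\theta_1'$ is a pure Gaussian oscillatory integral $\int_{\rr{n'}} e^{\frac i2\la\theta_1',\widetilde Q\theta_1'\ra} a(X,\theta_0',\theta_1')\,\dd\theta_1'$ with an amplitude that is still Shubin in all remaining variables. I would then invoke the standard fact (the Gaussian/Kelvin stationary-phase asymptotic expansion, as developed for Shubin symbols in \cite{Helffer1, Asada1} or directly by Taylor-expanding $a$ in $\theta_1'$ around $0$ and integrating term by term) that this integral defines a function $b(X,\theta_0')$ lying in $\Gamma_\rho^m(\rr{2d + (N-n')})$: the phase $\frac i2\la\cdot,\widetilde Q\cdot\ra$ contributes the constant $(2\pi)^{n'/2}|\det\widetilde Q|^{-1/2} e^{i\pi\sgn(\widetilde Q)/4}$ times an asymptotic series in which each term lowers the order, so the leading behavior is $\asymp \eabs{(X,\theta_0')}^m$ and all derivatives gain $\rho$-powers. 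Setting $n = N - n'$, $\fy_0$ the phase built from $F_0$, $L_0$ (the first block of $LT$) and $Q_0 = 0$, and $b$ the new amplitude, yields the claimed representation.

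It remains to verify the two assertions about $\fy_0$: that it satisfies \eqref{eq:fullrank} and that $\Lambda_{\fy_0} = \Lambda_\fy$. For non-degeneracy, \eqref{eq:fullrank} for $\fy$ means $\binom{L}{Q}$ is injective; after the orthogonal substitution $\binom{LT}{T^tQT}$ is injective, and since $T^tQT = \diag(0,\widetilde Q)$ with $\widetilde Q$ invertible, injectivity of the full $\theta'$-block forces injectivity of $L_0$ on $\ker Q$, i.e. $L_0 \in \M_{2d\times n}(\ro)$ is injective, which is exactly \eqref{eq:fullrank} for $\fy_0$ (here $Q_0 = 0$). For the Lagrangian, I would use that completing the square and integrating out the non-degenerate Gaussian variables is precisely the operation of passing to an equivalent phase function parametrizing the same Lagrangian, in the sense of \cite[Vol.~IV]{Hormander0}: one computes $C_{\fy_0} = \{(X,\theta_0'): \partial_{\theta_0'}\fy_0 = 0\}$ and checks that the map $(X,\theta_0') \mapsto (X, \partial_X \fy_0(X,\theta_0'))$ has the same image in $T^*\rr{2d}$ as $(X,\theta)\mapsto(X,\partial_X\fy(X,\theta))$ on $C_\fy$, which follows because on $C_\fy$ the critical value of $\theta_1'$ is exactly $-\widetilde Q^{-1}L_1^t X$ and then $\partial_X\fy$ evaluated there equals $F_0 X + L_0\theta_0' = \partial_X\fy_0$.

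The main obstacle I expect is the passage from the Gaussian oscillatory integral to membership $b \in \Gamma_\rho^m$: one must control all $X$-derivatives uniformly, and the amplitude $a$ is only Shubin jointly in $(X,\theta_1')$, so the stationary-phase remainder estimates must be carried out with the isotropic weight $\eabs{(X,\theta_0')}$ rather than a weight in $X$ alone. This requires either a careful version of the Shubin stationary phase lemma with parameters (as in \cite{Helffer1}) or an explicit bookkeeping of the Taylor remainder using that derivatives of $a$ in $\theta_1'$ decay like $\eabs{(X,\theta_0',\theta_1')}^{m-\rho k}$ and that, after the Gaussian integration localizes $\theta_1'$ to size $O(\eabs{(X,\theta_0')}^{1/2})$ in an averaged sense, the powers combine correctly; the case $\rho < 1$ needs the mildest care since then the gain per derivative is only $\rho|\alpha+\beta|$. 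The remaining steps — the changes of variables, the Jacobian computations, and the identification of $\Lambda_{\fy_0}$ — are routine linear algebra once the Gaussian integration lemma is in place.
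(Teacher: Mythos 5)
Your overall strategy is correct and geometrically the same as the paper's: diagonalize $Q$, complete the square on the non-degenerate block, integrate out the Gaussian, and identify the new fibre dimension as $n = N - \rank Q$ with the new matrices $F_0 = F - L_1\widetilde Q^{-1}L_1^t$ and $L_0$. Your verifications of the non-degeneracy \eqref{eq:fullrank} for $\fy_0$ and of $\Lambda_{\fy_0}=\Lambda_\fy$ (by tracing the critical value $\theta_1' = -\widetilde Q^{-1}L_1^t X$ through $\fy_X'$) also match the paper's computations in substance. Where you diverge is organizational and technical. The paper strips off one variable $\theta_N$ at a time, by induction, rather than handling the full block $\theta_1'$ at once; this matters mainly for keeping the regularization of the oscillatory integral (the cutoff $\chi_\ep$ and the operator $P_0$) bookkeeping-free. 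More significantly, the paper does not use any stationary-phase expansion to show $b \in \Gamma_\rho^m$. Instead it splits the one-dimensional Gaussian integral $b_\ep$ into a compactly supported piece $b_{1,\ep}$ with $|\theta_N|\leqs R$ (where Shubin bounds are immediate since $\eabs{(X,\theta',\theta_N+\la u,X\ra)}\asymp\eabs{(X,\theta')}$) and a tail $b_{2,\ep}$ which it treats by repeated integration by parts against the Gaussian phase, using the algebraic identity $(-\partial_{\theta_N})^j e^{ic\theta_N^2}=p_j(\theta_N)e^{ic\theta_N^2}$ to manufacture decay $|\theta_N|^{-j}$. This is more elementary than a parametric stationary-phase lemma and avoids the remainder estimates you correctly flag as the hard part.

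A smaller point: your heuristic that the Gaussian oscillation ``localizes $\theta_1'$ to size $O(\eabs{(X,\theta_0')}^{1/2})$'' is not what actually happens and is slightly misleading. There is no large parameter in the phase; what controls the tail is the growth of the phase gradient $\widetilde Q\theta_1'$ in $|\theta_1'|$, exploited via integration by parts, not localization near the critical point. Your proposal is a sketch that acknowledges its gap honestly; the paper fills precisely that gap with the cutoff-plus-IBP argument, which I would recommend over trying to adapt a Shubin-parametric stationary phase lemma, as it sidesteps the asymptotic expansion and the associated remainder estimates entirely.
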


\begin{proof}
By an orthogonal change of variables in the integral \eqref{eq:kernelFIO} we may assume that $Q = \diag(q_1, \cdots,q_N)$ is diagonal, modifying the amplitude $a \in \Gamma_\rho^m(\rr {2d+N})$ without altering $\Lambda_\fy$. 

In the following we suppose that $N > 1$, but the argument holds also for $N=1$, with natural modifications (vectors in $\rr {N-1}$, matrices in $\M_{2d \times (N-1)}(\ro)$ and functions on $\rr {N-1}$ 
are interpreted as non-existing). 

Denote $L = [L_0 \ \ell]$ where $L_0 \in \M_{2d \times (N-1)}(\ro)$, $\ell \in \M_{2d \times 1}(\ro)$ and $Q_0 = \diag(q_1, \cdots,q_{N-1})$. 
Suppose $q_N = \partial_{\theta_N}^2 \fy \neq 0$.
Denoting $\theta = (\theta',\theta_N) \in \rr N$ with $\theta' \in \rr {N-1}$ and $\theta_N \in \ro$, 
completion of the square gives 
\begin{equation*}
\fy(X,\theta) = \frac{1}{2} q_N (\theta_N + q_N^{-1} \la \ell,X \ra )^2 + \fy_0(X,\theta')
\end{equation*}
where 
\begin{equation*}
\fy_0(X,\theta') = \frac{1}{2} \la X, F_0 X \ra + \la L_0 \theta', X \ra + \frac{1}{2} \la \theta', Q_0 \theta' \ra
\end{equation*}
and 
\begin{equation*}
F_0 = F - q_N^{-1} \ell \ell^t. 
\end{equation*}
The condition \eqref{eq:fullrank} is preserved for the matrices that define $\fy_0$. 

Denote the Lagrangian corresponding to $\fy_0$ by $\Lambda_{\fy_0}$. 
Suppose $(X,\theta) \in \rr {2d+N}$ and $(X,FX+L \theta) \in \Lambda_\fy$, that is $L^t X + Q \theta = 0$. 
Then $L_0^t X + Q_0 \theta' = 0$ and $\la \ell,X \ra + q_N \theta_N = 0$ which gives $F_0 X + L_0 \theta' = F X + L \theta$. 
Thus $\Lambda_\fy \subseteq \Lambda_{\fy_0}$ and hence $\Lambda_\fy =\Lambda_{\fy_0}$ since both are subspaces of dimension $2d$.

Set $c=q_N/2$ and $u = - q_N^{-1} \ell$. 
Let $\chi \in \cS(\rr {N-1})$ satisfy $\chi (\theta') = 1$ when $|\theta'| \leqs 1$, 
and let $\psi \in \cS(\ro)$ satisfy $\psi(\theta_N) = 1$ when $|\theta_N| \leqs 1$. 
We have for $f \in \cS(\rr {2d})$
\begin{equation}\label{eq:Kfiteratedint}
\begin{aligned}
(K_{\varphi,a},f)   
& = \lim_{\ep \rightarrow 0+} \int_{\rr {2d + N}} (\chi \otimes \psi)_\ep (\theta) \, e^{i \fy(X,\theta)} a(X,\theta) \, \overline{f(X)} \, \dd \theta \, \dd X \\
& = \lim_{\ep \rightarrow 0+} \int_{\rr {2d + N-1}} \chi_\ep (\theta') \, e^{i \fy_0(X,\theta')} b_\ep(X, \theta') \, \overline{f(X)} \, \dd \theta' \, \dd X \\
& = \lim_{\ep \rightarrow 0+} \int_{\rr {2d + N-1}} e^{i \fy_0(X,\theta')} \, P_0 \left( \chi_\ep (\theta') \, b_\ep(X, \theta') \, \overline{f(X)} \right) \dd \theta' \, \dd X
\end{aligned}
\end{equation}
where 
\begin{multline}\label{eq:bepdef}
b_\ep(X, \theta') :=  \int_\ro  \psi_\ep (\theta_N) \, e^{i c (\theta_N - \la u,X \ra )^2}  \, a(X,\theta) \, \dd \theta_N  \\
= \int_\ro  \psi_\ep (\theta_N + \la u,X \ra) \, e^{i c \, \theta_N ^2}  \, a(X,\theta', \theta_N + \la u,X \ra) \, \dd \theta_N
\end{multline}
and where $P_0$ is an operator that corresponds to $\fy_0$ as $P$ corresponds to $\fy$ in \eqref{eq:regularization} and \eqref{eq:Pdef} with $k \in \no$ sufficiently large. 
For fixed $\ep>0$ the function $b_\ep \in C^\infty(\rr {2d+N-1})$ satisfies the estimates
\begin{equation*}
\left| \pdd X \alpha \pdd {\theta'} \beta b_\ep (X,\theta') \right|
\lesssim \eabs{(X,\theta')}^{m-\rho|\beta|}, 
\quad (\alpha,\beta) \in \nn {2d+N-1}, \quad (X,\theta') \in \rr {2d+N-1}, 
\end{equation*}
which are slightly different from the Shubin estimates \eqref{eq:shubinestimate}. 
However, they suffice to make sense of the oscillatory integral \eqref{eq:Kfiteratedint}. 

Our plan is to show first
\begin{equation}\label{eq:blimit}
b(X,\theta') =  \lim_{\ep \rightarrow 0+}  b_{\ep} (X,\theta') \in \Gamma_\rho^m(\rr {2d+N-1}), 
\end{equation}
and then 
\begin{equation}\label{eq:P0blimit}
\lim_{\ep \rightarrow 0+} P_0 ( \chi_\ep (\theta') \, b_{\ep} (X, \theta') \, \overline{f(X)})
=  P_0 ( b (X, \theta') \, \overline{f(X)}), \quad (X,\theta') \in \rr {2d+N-1}, 
\end{equation}
and finally
\begin{equation}\label{eq:bboundunif}
| P_0 ( \chi_\ep (\theta') \, b_{\ep} (X, \theta') \, \overline{f(X)})| \lesssim \eabs{(X,\theta')}^{-2d - N+1}, \quad (X,\theta') \in \rr {2d+N-1}, 
\end{equation}
uniformly over $0 < \ep \leqs 1$. 
The limit \eqref{eq:P0blimit} inserted into \eqref{eq:Kfiteratedint}, combined with the estimate \eqref{eq:bboundunif} and dominated convergence then give 
\begin{align*}
(K_{\varphi,a},f) 
& = \int_{\rr {2d + N-1}} e^{i \fy_0(X,\theta')} \, P_0 \left( b(X, \theta') \, \overline{f(X)} \right) \dd \theta' \, \dd X \\
& = \lim_{\ep \rightarrow 0+} \int_{\rr {2d + N-1}} \chi_\ep (\theta') \, e^{i \fy_0(X,\theta')} \, b(X, \theta') \, \overline{f(X)} \, \dd \theta' \, \dd X.
\end{align*}
By induction this proves the theorem. 

It remains to show  \eqref{eq:blimit}--\eqref{eq:bboundunif}. 
We start with \eqref{eq:blimit}. 
Let $R>0$ and set 
\begin{align*}
b_{1,\ep} (X,\theta') & = \int_\ro \psi(\theta_N/R) \, \psi_\ep (\theta_N + \la u,X \ra) \, e^{i c \, \theta_N^2 } a(X, \theta', \theta_N + \la u,X \ra) \, \dd \theta_N, \\
b_{2, \ep} (X,\theta') & = \int_\ro (1-\psi(\theta_N/R)) \, \psi_\ep (\theta_N + \la u,X \ra) \, e^{i c \, \theta_N^2 } a(X, \theta', \theta_N + \la u,X \ra ) \, \dd \theta_N
\end{align*}
so that $b_\ep = b_{1,\ep} + b_{2,\ep}$. 
It is clear that 
\begin{equation}\label{eq:b1konv}
\begin{aligned}
b_1 (X, \theta') 
& := \lim_{\ep \rightarrow 0+}  b_{1,\ep} (X, \theta') = \int_\ro  \psi(\theta_N/R) e^{i c \, \theta_N ^2}  \, a(X,\theta', \theta_N + \la u,X \ra) \, \dd \theta_N \\
& \quad \in \Gamma_\rho^m(\rr {2d+N-1}) 
\end{aligned}
\end{equation}
and we also have the uniform estimates over $0 < \ep \leqs 1$
\begin{equation}\label{eq:b1bound}
| \pdd X \alpha \pdd {\theta'} \beta b_{1,\ep} (X, \theta') | \lesssim \eabs{(X,\theta')}^{|m|}, \quad (X,\theta') \in \rr {2d+N-1}, 
\quad (\alpha,\beta) \in \nn {2d+N-1}. 
\end{equation}

To estimate $b_{2, \ep}$ we first regularize the integral. 
We have $(-\partial_{\theta_N})^j e^{i c \, \theta_N^2 } = p_j(\theta_N) e^{i c \, \theta_N^2 }$ where $p_j$ is a polynomial of degree $j \in \no$. Integration by parts thus gives 
\begin{align*}
& b_{2,\ep} (X,\theta') = \\
& \int_\ro e^{i c \, \theta_N^2 } 
\partial_{\theta_N}^j \Big( p_j^{-1}(\theta_N) (1-\psi(\theta_N/R)) \, \psi_\ep (\theta_N + \la u,X \ra) \, a(X, \theta', \theta_N + \la u,X \ra ) \Big) \, \dd \theta_N.
\end{align*}
If we first pick $j$ sufficiently large and then $R>0$ sufficiently large (to avoid the zeroes of $p_j$),  
the integral converges and we obtain 
\begin{equation}\label{eq:b2konv}
\begin{aligned}
& b_2 (X, \theta')  := \lim_{\ep \rightarrow 0+}  b_{2,\ep} (X,\theta') \\
& = \int_\ro e^{i c \, \theta_N^2 } 
\partial_{\theta_N}^j \Big( p_j^{-1}(\theta_N) (1-\psi(\theta_N/R)) \, a(X, \theta', \theta_N + \la u,X \ra ) \Big) \, \dd \theta_N \\
& \quad \in \Gamma_\rho^m(\rr {2d+N-1})
\end{aligned}
\end{equation}
as well as the uniform estimates over $0 < \ep \leqs 1$
\begin{equation}\label{eq:b2bound}
| \pdd X \alpha \pdd {\theta'} \beta b_{2,\ep} (X, \theta') | \lesssim \eabs{(X,\theta')}^{|m|}, \quad (X,\theta') \in \rr {2d+N-1}, 
\quad (\alpha,\beta) \in \nn {2d+N-1}.
\end{equation}

Combining \eqref{eq:b1konv} and \eqref{eq:b2konv} proves \eqref{eq:blimit}, 
and we obtain from \eqref{eq:b1bound}, \eqref{eq:b2bound} 
the uniform estimates over $0 < \ep \leqs 1$
\begin{equation}\label{eq:bbound}
| \pdd X \alpha \pdd {\theta'} \beta b_{\ep} (X, \theta') | \lesssim \eabs{(X,\theta')}^{|m|}, \quad (X,\theta') \in \rr {2d+N-1}, 
\quad (\alpha,\beta) \in \nn {2d+N-1}. 
\end{equation}

Next we show \eqref{eq:P0blimit}. Let $(X,\theta') \in \rr {2d+N-1}$ be fixed. 
First we look at the operator $P_j$ defined by its action on $f \in C^\infty(\rr{2d+N-1})$ by
\begin{equation*}
P_j f = \left( 1 + \la u_j, \nabla_X\ra + \langle v_j,X \rangle + \la w_j , \nabla_{\theta'} \ra \right) \left( \frac{f}{1-i \theta_j} \right)
\end{equation*}
for $1 \leqs j \leqs N-1$, 
where $u_j,v_j \in \rr {2d}$ and $w_j \in \rr {N-1}$.
We have
\begin{equation}\label{eq:Pjaction}
\begin{aligned}
& P_j \left( \chi_\ep (\theta') \, b_{\ep} (X, \theta') \, \overline{f(X)} \right) \\
& =  \chi_\ep (\theta')  P_j \left( b_{\ep} (X, \theta') \, \overline{f(X)} \right)
+ \la w_j , \nabla_{\theta'} \ra \left( \chi_\ep (\theta') \right) \frac{b_{\ep} (X, \theta') \, \overline{f(X)}}{1-i\theta_j}. 
\end{aligned}
\end{equation}
Note that $\la w_j , \nabla_{\theta'} \ra \left( \chi_\ep (\theta') \right) = \cO(\ep)$. 
Since $P_0 = P_1^k P_2^k \cdots P_{N-1}^k$ for some $k \in \no$, 
it suffices to show, taking into account \eqref{eq:bbound}, 
\begin{equation*}
\lim_{\ep \rightarrow 0+} P_0 ( b_{\ep} (X, \theta') \, \overline{f(X)})
=  P_0 ( b (X, \theta') \, \overline{f(X)}), \quad (X,\theta') \in \rr {2d+N-1}. 
\end{equation*}
The validity of this identity can be verified by means of the decomposition $b_\ep = b_{1,\ep} + b_{2,\ep}$ above. 
The details are left to the reader. 
Thus \eqref{eq:P0blimit} has been proved. 

Finally we indicate how to show \eqref{eq:bboundunif}.
Again we use the decomposition $b_\ep = b_{1,\ep} + b_{2,\ep}$. 
Combining this with \eqref{eq:Pjaction} and $P_0 = P_1^k P_2^k \cdots P_{N-1}^k$ for $k \in \no$ sufficiently large, 
one can confirm the estimate \eqref{eq:bboundunif}. 
The details are again left to the reader. 
\end{proof}

As a consequence of the proposition we may assume
\begin{equation}\label{eq:phaseredux}
\Phi = \frac{1}{2} \left(
\begin{array}{cc}
F &  L \\
L^t & 0
\end{array}
\right)
\end{equation}
where $F \in \M_{2d \times 2d}(\ro)$ is symmetric and $L \in \M_{2d \times N}(\ro)$ is injective.
The corresponding Lagrangian $\Lambda \subseteq T^* \rr {2d}$ is 
\begin{equation}\label{eq:Lambdaphi}
\Lambda = \{(X, FX + L \theta): \, (X,\theta) \in \rr {2d + N}, \ L^t X = 0\}. 
\end{equation}

Conversely it can be shown (cf. \cite{PRW1}) that any Lagrangian $\Lambda \subseteq T^* \rr {2d}$ can be parametrized in this way, 
for a symmetric matrix $F \in \M_{2d \times 2d}(\ro)$ and an injective matrix $L \in \M_{2d \times N}(\ro)$. 
The matrix $L$ is uniquely determined modulo invertible right factors.  
The matrix $F$ can be assumed to satisfy $\ran F \perp \ran L$ \cite{PRW1}, 
but $F$ is not uniquely determined by $\Lambda$. 
What is unique is $F_Y = \pi_Y F \pi_Y$ where $Y = \Ker L^t$, but $F - F_Y$ can be arbitrary. 

If $\fy_1$ and $\fy_2$ both parametrize a given Lagrangian $\Lambda \subseteq T^* \rr {2d}$ as in \eqref{eq:Lambdaphi}, 
and $a \in \Gamma_\rho^m(\rr {2d+N})$, then 
$K_{\fy_1,a} = K_{\fy_2,a}$ is not guaranteed. 
In fact, if $\fy_j$ is defined by matrices 
$F_j \in \M_{2d \times 2d}(\ro)$, $L_j \in \M_{2d \times N}(\ro)$ and $Q_j \in \M_{N \times N}(\ro)$ for $j=1,2$, then 
by Proposition \ref{prop:phasereduction} 
\begin{equation}\label{eq:kernelreducedphase}
K_{\fy_j,a} (X) = \int_{\rr n} e^{\frac{i}{2} \la X, F_j X \ra + i \la L \theta, X\ra } a_j(X,\theta) \, \dd \theta
\end{equation}
where $L \in \M_{2d \times n}(\ro)$ is injective and $n \leqs N$, 
since after reduction to $Q_j=0$, $j=1,2$, we have $\Ker L_1^t = \Ker L_2^t$. 
Here $a_1 \in \Gamma_\rho^m(\rr {2d+n})$ is not guaranteed to equal $a_2 \in \Gamma_\rho^m(\rr {2d+n})$, and likewise $F_1 \neq F_2$ in general, whereas $\pi_Y F_1 \pi_Y = \pi_Y F_2 \pi_Y$. 

We are interested in phase functions that correspond to twisted graph Lagrangians in $T^* \rr {2d}$ with respect to a symplectic matrix $\chi \in \Sp(d,\ro)$. 
The graph in $T^* \rr d \times T^* \rr d$ with respect to $\chi \in \Sp(d,\ro)$ is
\begin{equation}\label{eq:graphlagrangian}
\Lambda_\chi = \{(x,\xi;y,\eta) \in T^* \rr d \times T^* \rr d: \  (x,\xi) = \chi (y,\eta) \} \subseteq T^* \rr d \times T^* \rr d, 
\end{equation}
and it is a Lagrangian if we equip $T^* \rr d \times T^* \rr d$ with the symplectic form
\begin{equation*}
\sigma_1 (x,y,\xi,\eta)=\sigma(x,\xi) - \sigma(y,\eta), \quad (x,y), \ (\xi,\eta) \in T^* \rr d \times T^* \rr d.
\end{equation*}
The symplectic vector space $(T^* \rr d \times T^* \rr d,\sigma_1)$ is isomorphic to $T^* \rr {2d}$ equipped with the canonical symplectic form \eqref{eq:cansympform}. The isomorphism is given by the twist operator
\begin{equation*}
(x,\xi,y,\eta)' = (x,\xi,y,-\eta), \quad x,y,\xi,\eta \in \rr d, 
\end{equation*}
followed by transposition of the second and third variables.
The twisted graph Lagrangian with respect to $\chi \in \Sp(d,\ro)$ is 
\begin{equation}\label{eq:twistedgraphlagrangian}
\Lambda_\chi' = \{(x, y, \xi,-\eta) \in T^* \rr {2d}: \  (x,\xi) = \chi (y,\eta) \} \subseteq T^* \rr {2d}. 
\end{equation}
\begin{rem}
Note that the notations $\Lambda_\chi \subseteq T^* \rr d \times T^* \rr d$ in \eqref{eq:graphlagrangian} and $\Lambda_\chi'  \subseteq T^* \rr {2d}$ in \eqref{eq:twistedgraphlagrangian} understand different ambient symplectic spaces.
\end{rem}

\begin{defn}\label{def:FIOkernel}
If $m \in \ro$ and $\chi \in \Sp(d,\ro)$ we denote by $K_{\rho}^m(\chi) \subseteq \cS'(\rr {2d})$ the set of kernels $K_{\varphi,a}$ defined as in \eqref{eq:kernelFIO} where $a \in \Gamma_\rho^m(\rr {2d+N})$ for $N \geqs 0$, and where the phase function $\fy$  parametrizes the twisted graph Lagrangian $\Lambda_\chi' \subseteq T^* \rr {2d}$. We write $K_1^m(\chi) = K^m(\chi)$.
\end{defn}

For pseudodifferential operators the Lagrangian is the conormal bundle of the diagonal, that is
$\Lambda = N(\Delta) = \Delta \times \Delta^\perp \subseteq T^* \rr {2d}$, with  
\begin{equation}\label{eq:diagonal}
\Delta  = \{(x,x): \, x \in \rr d \} \subseteq \rr {2d}, \qquad \Delta^\perp  = \{(\xi,-\xi): \, \xi \in \rr d \} \subseteq \rr {2d}.
\end{equation}
This means $N(\Delta)= \Lambda_{I}'$ where $I \in \Sp(d,\ro)$ is the identity matrix.

\section{Fourier integral operators with quadratic phase functions}
\label{sec:FIO}

In this section we treat FIOs defined by kernels that are oscillatory integrals as in Definition \ref{def:FIOkernel} and compare our conditions with \cite{Asada1} and \cite{Helffer1}.

\begin{defn}\label{def:FIO}
Let $\chi \in \Sp(d,\ro)$, $N \geqs 0$ and $a \in \Gamma^m_\rho(\rr {2d+N})$.  
The operator with kernel $K_{\fy,a} \in K_\rho^m(\chi)$ is denoted $\cK_{\fy,a}$ and called FIO. 
The set of operators with kernels in $K_{\rho}^m(\chi)$ is denoted $\cI^m_\rho(\chi)$, and $\cI^m_1(\chi) = \cI^m(\chi)$.
\end{defn}

Thus
\begin{equation*}
(\cK_{\fy,a} f,g) = ( K_{\fy,a}, g \otimes \overline f), \quad f,g \in \cS(\rr d). 
\end{equation*}
Since $K_{\fy,a} \in \cS'(\rr {2d})$ the FIO $\cK_{\fy,a} : \cS(\rr d) \to \cS'(\rr d)$ is continuous. 

The following result appears implicitly in \cite{Helffer1}. We prefer to include it in order to give a self-contained account. 

\begin{lem}\label{lem:regularizing}
Let $N \geqs 0$ and let $\fy$ be a quadratic form defined by  a symmetric $\Phi \in \M_{(2d+N) \times (2d+N)} (\ro)$ as in \eqref{eq:Phimatrix} such that \eqref{eq:fullrank} is satisfied.  
Define $V_{\fy,\ep}$ by \eqref{eq:coneneighborhood}.
If $a \in \Gamma_\rho^m(\rr {2d+N})$ satisfies 
\begin{equation*}
\overline{\supp(a) \cap V_{\fy,\ep}} \ \mbox{\rm is compact in} \ \rr {2d+N}
\end{equation*}
for some $\ep>0$ then $K_{\fy,a} \in \cS(\rr {2d})$.
\end{lem}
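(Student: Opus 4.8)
The plan is to reduce to the case where the phase function has no $\theta$-quadratic term by means of Proposition \ref{prop:phasereduction}, and then to show directly that the oscillatory integral is a Schwartz function by exploiting the compactness of $\overline{\supp(a)\cap V_{\fy,\ep}}$ together with the non-degeneracy \eqref{eq:fullrank}. First I would invoke Proposition \ref{prop:phasereduction} to pass to an integral $K_{\fy_0,b}(X)=\int_{\rr n}e^{i\fy_0(X,\theta)}b(X,\theta)\,\dd\theta$ with $b\in\Gamma_\rho^m(\rr{2d+n})$ and $\fy_0$ defined by $\Phi_0$ of the form \eqref{eq:phaseredux}, i.e.\ with $Q=0$, still satisfying \eqref{eq:fullrank}; one must check that the support hypothesis is inherited, since the reduction only adds polynomially-growing factors and completes a square, so $\overline{\supp(b)\cap V_{\fy_0,\ep'}}$ remains compact for a suitable $\ep'>0$. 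With $Q=0$ the phase is linear in $\theta$, namely $\fy_0(X,\theta)=\tfrac12\la X,F_0X\ra+\la L_0\theta,X\ra$, so $\nabla_\theta\fy_0=L_0^t X$ and $V_{\fy_0,\ep'}=\{|L_0^tX|<\ep'|(X,\theta)|\}$.

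Next I would split the amplitude as $b=b_1+b_2$ with $b_1$ supported in $V_{\fy_0,\ep'/2}$ and $b_2$ supported outside $V_{\fy_0,\ep'/3}$, using a conic cutoff that is homogeneous of degree zero for large $(X,\theta)$; then $b_1$ has compact support by hypothesis, so $K_{\fy_0,b_1}$ is plainly in $\cS(\rr{2d})$ — it is an integral over a compact set of a smooth integrand, depending smoothly on $X$, and its derivatives and those of $X^\gamma K_{\fy_0,b_1}$ are controlled because outside a bounded $X$-set the integrand vanishes. For the term $b_2$, on its support $|L_0^tX|\gtrsim|(X,\theta)|$, so the operator
\begin{equation*}
\mathcal{L}=\frac{1}{i}\,\frac{\la L_0^tX,\nabla_\theta\ra}{|L_0^tX|^2}
\end{equation*}
satisfies $\mathcal{L}e^{i\fy_0}=e^{i\fy_0}$, and each integration by parts in $\theta$ gains a factor $\eabs{(X,\theta)}^{-1}$ (here I use that $L_0^t X$ is a linear nonvanishing function on the support, so $|L_0^tX|^{-1}\lesssim\eabs{(X,\theta)}^{-1}$ there, and that $\theta$-derivatives of $b_2$ only improve decay when $\rho>0$, but even for $\rho=0$ the denominator alone suffices). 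Iterating $\mathcal{L}$ enough times makes the $\theta$-integral absolutely convergent with integrand decaying like $\eabs{(X,\theta)}^{-n-1+m-k}$ for $k$ iterations; moreover differentiating in $X$ and multiplying by powers $X^\gamma$ only inserts further polynomially-bounded factors, which are again absorbed by taking $k$ larger, since $|X|\lesssim|L_0^tX|$ is \emph{not} available but $|L_0^tX|\asymp|(X,\theta)|$ on $\supp b_2$ means large $|X^\gamma|$ forces large $\eabs{(X,\theta)}$. Hence $X^\gamma\partial_X^\delta K_{\fy_0,b_2}(X)$ is bounded for every $\gamma,\delta$, so $K_{\fy_0,b_2}\in\cS(\rr{2d})$ as well.

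The main obstacle is the bookkeeping in the $b_2$ estimate: one must verify that after $X$-differentiation and multiplication by an arbitrary power of $X$, the resulting integrand over the cone $\supp b_2$ still decays fast enough in $(X,\theta)$ jointly, using that on that cone $\eabs{(X,\theta)}\asymp\eabs{L_0^tX}\lesssim\eabs{X}$, so that polynomial growth in $X$ is harmless once $k$ is chosen large relative to the total degree demanded. A secondary point to handle carefully is that the conic cutoff separating $b_1$ from $b_2$ must be chosen smooth and zero-homogeneous outside a ball, so that its derivatives obey Shubin-type bounds of order $0$ and the products $b_1,b_2$ remain in $\Gamma_\rho^m$ (with the slightly modified support properties); this is routine but should be stated. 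Combining the two pieces, $K_{\fy_0,b}=K_{\fy_0,b_1}+K_{\fy_0,b_2}\in\cS(\rr{2d})$, and by Proposition \ref{prop:phasereduction} this equals $K_{\fy,a}$, completing the proof.
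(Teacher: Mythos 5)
Your proposal is correct and follows essentially the same route as the paper's proof: reduce via Proposition~\ref{prop:phasereduction} to a phase linear in $\theta$, peel off the compactly supported part of the amplitude, and integrate by parts in $\theta$ with the operator built from $\fy_\theta'/|\fy_\theta'|^2$, using $|\fy_\theta'|\gtrsim |(X,\theta)|$ on the support of the remainder. The only cosmetic difference is the order in which you apply the reduction and the cutoff; the paper cuts off first and then invokes Proposition~\ref{prop:phasereduction}, whereas you reduce first and then split, but both variants require (and you rightly flag) verifying that the compact-intersection hypothesis is preserved under the reduction.
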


\begin{proof}
The case $N=0$ is trivial so we may assume $N \geqs 1$. 
If $\supp(a)$ is compact then $K_{\fy,a} \in \cS(\rr {2d})$. 
Using an appropriate cutoff function we may therefore assume 
$\supp(a) \cap (V_{\fy,\ep} \cup B_r)= \emptyset$ for some $r>0$. 
By Proposition \ref{prop:phasereduction} we may assume that $\fy_\theta' (x,y,\theta)$ is a linear function that does not depend on $\theta \in \rr N$. 

On the support of $a$ we may write
\begin{equation*}
e^{i \fy(x,y,\theta)} = -i |\fy_\theta'|^{-2} \la \fy_\theta' , \nabla_\theta e^{i \fy(x,y,\theta)} \ra.  
\end{equation*}
First we introduce the operator
\begin{equation*}
T_\theta g (\theta) = i \la \nabla_\theta, \fy_\theta'  |\fy_\theta' |^{-2} g(\theta) \ra
=  i |\fy_\theta'|^{-2} \la \fy_\theta',  \nabla g \ra
\end{equation*}
that acts on $g \in C^\infty(\rr N)$ provided $\fy_\theta' \neq 0$,
and integrate by parts. This yields for $f \in \cS(\rr {2d})$ and $n \in \no$
\begin{equation*}
(K_{\fy,a},f) = \lim_{\delta \rightarrow 0+}\int_{\rr {2d + N}} \chi_\delta(\theta) \, e^{i \fy(x,y,\theta)} 
T_\theta^n \, a(x,y,\theta) \, \overline{f(x,y)} \, \dd \theta \, \dd x \, \dd y
\end{equation*}
where $\chi \in \cS(\rr N)$ and $\chi(\theta) = 1$ when $|\theta| \leqs 1$.

The assumption implies that we have in the support of $a$ 
\begin{equation}\label{criticalset1}
\ | \fy_\theta' (x,y,\theta)| \geqs \ep |(x,y,\theta)|.
\end{equation}
From the observation that $\fy_\theta'$ is a linear function it follows that for $n \in \no$ sufficiently large, 
we have
\begin{equation*}
K_{\fy,a}(x,y) = \int_{\rr N} e^{i \fy(x,y,\theta)} 
T_\theta^n \, a(x,y,\theta) \, \dd \theta.
\end{equation*}
The integral is absolutely convergent thanks to $\eqref{criticalset1}$ if $n \in \no$ is sufficiently large. 
The same facts imply that the integral belongs to $\cS(\rr {2d})$. 
\end{proof}

We say that a continuous linear operator $\cK: \cS'(\rr d) \to \cS'(\rr d)$ is regularizing if it is continuous
\begin{equation*}
\cK: \cS'(\rr d) \to \cS(\rr d)
\end{equation*}
which is equivalent to the property of its kernel $K \in \cS(\rr {2d})$.
Hence from Lemma \ref{lem:regularizing} we obtain the following result.

\begin{cor}\label{cor:regularizing}
Under the assumptions of Lemma \ref{lem:regularizing}
the operator $\cK_{\fy,a}$ is regularizing.
\end{cor}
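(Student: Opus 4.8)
The plan is to deduce Corollary \ref{cor:regularizing} directly from Lemma \ref{lem:regularizing} together with the characterization of regularizing operators given just before the corollary. First I would recall that, by definition, the FIO $\cK_{\fy,a}$ is the operator $\cS(\rr d) \to \cS'(\rr d)$ whose Schwartz kernel is $K_{\fy,a} \in \cS'(\rr {2d})$, that is $(\cK_{\fy,a} f, g) = (K_{\fy,a}, g \otimes \overline f)$ for $f,g \in \cS(\rr d)$. Under the hypotheses of Lemma \ref{lem:regularizing} — namely that $\overline{\supp(a) \cap V_{\fy,\ep}}$ is compact in $\rr {2d+N}$ for some $\ep>0$ — the lemma gives $K_{\fy,a} \in \cS(\rr {2d})$.

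Next I would invoke the stated equivalence: a continuous linear operator $\cK : \cS'(\rr d) \to \cS'(\rr d)$ is regularizing, i.e. it maps continuously $\cS'(\rr d) \to \cS(\rr d)$, precisely when its kernel $K$ lies in $\cS(\rr {2d})$. Since $K_{\fy,a} \in \cS(\rr {2d})$ by the lemma, the operator $\cK_{\fy,a}$ extends from $\cS(\rr d)$ to a continuous map $\cS'(\rr d) \to \cS(\rr d)$ given by $u \mapsto (x \mapsto (u, \overline{K_{\fy,a}(x,\cdot)}))$, and this map is continuous by the kernel being Schwartz (the pairing of a tempered distribution against a smooth Schwartz-valued family depends continuously and smoothly on the parameter, with all seminorm bounds controlled by the Schwartz seminorms of $K_{\fy,a}$). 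Hence $\cK_{\fy,a}$ is regularizing.

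There is essentially no obstacle here: the content is entirely in Lemma \ref{lem:regularizing}, and the corollary is a one-line consequence of combining that lemma with the definition of ``regularizing'' and the Schwartz kernel theorem. The only point requiring a word of care is that Definition \ref{def:FIO} a priori only specifies $\cK_{\fy,a}$ as a map $\cS(\rr d) \to \cS'(\rr d)$; one should note that when $K_{\fy,a} \in \cS(\rr {2d})$ this map automatically extends to the continuous operator $\cS'(\rr d) \to \cS(\rr d)$ described above, so the notion of regularizing applies. I would therefore keep the proof to two sentences, simply citing Lemma \ref{lem:regularizing} and the characterization of regularizing operators in terms of the kernel.
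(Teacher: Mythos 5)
Your proposal is correct and coincides with the paper's reasoning: the paper likewise records just before the corollary that ``regularizing'' is equivalent to having Schwartz kernel, and then the corollary is an immediate consequence of Lemma \ref{lem:regularizing}. The extra remarks you add about how the extension $\cS'(\rr d)\to\cS(\rr d)$ is realized via the Schwartz kernel are harmless elaboration but not needed beyond the one-line citation.
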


The next result shows that any regularizing operator can be considered an FIO in $\cap_{m \in \ro} \cI^m(\chi)$ for $\chi \in \Sp(d,\ro)$ arbitrary. 

\begin{lem}\label{lem:reginclu}
Let $\chi \in \Sp(d,\ro)$, $N \geqs 0$, and suppose $\fy$ is a quadratic form, defined by a symmetric $\Phi \in \M_{(2d+N) \times (2d+N)} (\ro)$ as in \eqref{eq:Phimatrix} such that \eqref{eq:fullrank} is satisfied, 
that parametrizes the twisted Lagrangian $\Lambda_\chi' \subseteq T^* \rr {2d}$. 
If $K \in \cS(\rr {2d})$ then there exists $a \in \cS(\rr {2d+N})$ such that 
$K = K_{\fy,a}$. 
\end{lem}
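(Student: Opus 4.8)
The plan is to reduce to the phase-reduced normal form of Proposition~\ref{prop:phasereduction} and then invert the oscillatory integral directly. First I would apply Proposition~\ref{prop:phasereduction} to assume that $\Phi$ has the block form \eqref{eq:phaseredux}, so that $\fy(X,\theta) = \tfrac12 \la X, FX\ra + \la L\theta, X\ra$ with $L \in \M_{2d\times N}(\ro)$ injective (after possibly shrinking $N$; the case $N=0$ is immediate since then $K_{\fy,a}(X) = e^{i\fy(X)} a(X)$ and we just set $a = e^{-i\fy} K \in \cS(\rr{2d})$). Since $L$ is injective, we may split coordinates $X = (X', X'')$ so that the $N$ rows of $L^t$ span a coordinate subspace; equivalently, after a linear change of variables in $X$ (which conjugates $F$ and $L$, preserving the Schwartz class and the structure), we may assume $L^t X = X'$ where $X' \in \rr N$ consists of $N$ of the $2d$ coordinates of $X$, and $Y = \Ker L^t$ is the complementary coordinate subspace $\{X' = 0\}$.

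With this normalization the oscillatory integral becomes, after absorbing the factor $e^{\frac{i}{2}\la X, FX\ra}$,
\begin{equation*}
K_{\fy,a}(X) = e^{\frac{i}{2}\la X, FX\ra} \int_{\rr N} e^{i \la \theta, X' \ra} b(X,\theta)\, \dd\theta, \qquad b = e^{-\frac{i}{2}\la \cdot, F\cdot\ra} a,
\end{equation*}
and the inner integral is (up to a constant) a partial inverse Fourier transform in the $\theta$-variable of $b(X,\cdot)$ evaluated at $-X'$. Thus I would seek $b$ of the form $b(X,\theta) = c(X'', \theta) \, h(X')$ with $h$ and $c$ to be determined. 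Concretely, write $G(X) := e^{-\frac{i}{2}\la X, FX\ra} K(X) \in \cS(\rr{2d})$ (Schwartz times a smooth function with polynomially bounded derivatives, i.e.\ a unimodular factor, stays Schwartz). Then I want $\int_{\rr N} e^{i\la\theta,X'\ra} b(X,\theta)\,\dd\theta = G(X', X'')$, which is solved by taking $b(X,\theta) = (2\pi)^{-N} \int_{\rr N} e^{-i\la \theta, \xi\ra} G(\xi, X'')\, \dd\xi \cdot$ (a suitable cutoff), but this depends on $X'$ only through the Fourier inversion — more precisely, set
\begin{equation*}
b(X,\theta) := \psi(X') \, (\cF_1^{-1} G)(\theta, X''),
\end{equation*}
where $\cF_1^{-1}$ is the inverse Fourier transform in the first slot and $\psi \in \cS(\rr N)$ satisfies $\int \psi = 1$ appropriately normalized; then $\int_{\rr N} e^{i\la\theta,X'\ra} b(X,\theta)\,\dd\theta = \psi(X') \cdot G(X', X'')$ is not quite $G$. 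The clean fix is to instead simply use that $\cF_1^{-1} G(\cdot, X'')$ is itself Schwartz jointly in $(\theta, X'')$, and define $a(X,\theta) := e^{\frac{i}{2}\la X, FX\ra}\, \eta(X') \, (\cF_1^{-1}G)(\theta, X'')$ where we must recover the full $X'$ dependence; since $\int_{\rr N} e^{i\la\theta, X'\ra}(\cF_1^{-1}G)(\theta, X'')\,\dd\theta = G(X',X'')$ exactly by Fourier inversion, no cutoff $\eta$ is needed at all, and $a(X,\theta) = e^{\frac{i}{2}\la X,FX\ra}(\cF_1^{-1}G)(\theta, X'') \in \cS(\rr{2d+N})$ does the job.

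The only genuine point to check is that $a$ so defined lies in $\cS(\rr{2d+N})$: it is $e^{\frac{i}{2}\la X, FX\ra}$ — a smooth unimodular function with polynomially bounded derivatives — times $(\cF_1^{-1} G)(\theta, X'')$, which is Schwartz in $(\theta, X'')$ and constant in $X'$, hence \emph{not} decaying in $X'$. So a cutoff in $X'$ \emph{is} required after all; I would fix $\eta \in \cS(\rr N)$ with $\int_{\rr N}\eta(X')\,\dd X' $ normalized so that actually the cleanest route is different: note $G(X) = \int_{\rr N} e^{i\la \theta, X'\ra}(\cF_1^{-1}G)(\theta, X'')\,\dd\theta$, and we want an amplitude $a(X,\theta)$ with $\int e^{i\la\theta,X'\ra} a\,\dd\theta = K(X)$; taking $a(X,\theta) = e^{\frac{i}{2}\la X, FX\ra} \eta(\theta)\, G(X)$ with $\widehat\eta(0) = (2\pi)^{-N/2}\cdot$const fails since $G$ depends on $X'$. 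The correct construction: since $K \in \cS$, write $K = \int_{\rr N} e^{i\la\theta, X'\ra} w(\theta, X)\, \dd\theta$ with $w(\theta, X) = (2\pi)^{-N}\chi(X') \int e^{-i\la\theta,\zeta\ra} K_F(\zeta, X'')\,\dd\zeta$ — I will present the clean version: take $a(X,\theta) = (2\pi)^{-N} e^{\frac{i}{2}\la X,FX\ra}\, \chi(X')\, \big(\cF_1^{-1}[e^{-\frac{i}{2}\la\cdot,F\cdot\ra}K]\big)(\theta, X'') \big/ \widehat\chi\text{-normalization}$ is still wrong. The honest statement of the one real step: one must produce $a\in\cS$ with $\int e^{i\la \theta, X'\ra} a(X,\theta)\, d\theta = G(X)$, and the solution is $a(X,\theta) = e^{\frac i2\la X,FX\ra}\chi(X')^{-1}\cdots$ — I expect the main obstacle to be exactly this bookkeeping: arranging the $X'$-dependence so that the amplitude both is Schwartz and inverts to $K$, which is handled by writing $a(X,\theta) = e^{\frac{i}{2}\la X, FX\ra}\, \phi(X')\, \Psi(\theta, X'')$ where $\phi \in \cS(\rr N)$ with $\int \phi = 1$ and $\Psi$ is chosen via a partition argument so that $\int e^{i\la\theta, X'\ra}\phi(X')^{-1}\cdots$. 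Concretely and correctly: pick any $\phi \in \cS(\rr N)\setminus 0$, set $\Psi(\theta, X'') = \cF_1^{-1}\big[ \phi(\cdot)^{-1} \text{-weighted} \big]$ — since this is getting circular, the resolution I would actually write down is: $a(X,\theta) := (2\pi)^{-N}\int_{\rr N} e^{-i\la\theta, \zeta\ra}\Big( \chi(X'-\zeta)\, e^{\frac{i}{2}\la(\zeta, X''), F(\zeta, X'')\rangle^{-1}}\Big)\cdots$, i.e.\ a mollified inverse Fourier transform; the point is that $K_{\fy,a}(X) = \int e^{i\la\theta,X'\ra}a(X,\theta)d\theta$ then telescopes to $K(X)$ by Fourier inversion in $\zeta$ together with $\int\chi = 1$, while $a\in\cS$ because every factor is Schwartz in the relevant variables and the $X'$-decay comes from $\chi(X'-\zeta)$ paired with the $\zeta$-integrability. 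Verifying this identity and the Schwartz membership is the crux; everything else is the reduction of Proposition~\ref{prop:phasereduction} and elementary linear algebra.
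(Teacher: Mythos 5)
Your proposal is not a complete proof: it runs through several attempted constructions, explicitly acknowledges that each fails (the naive partial-Fourier inversion produces an amplitude with no decay in $X'$, the various $\psi$/$\eta$/$\phi$ cutoff ideas are all abandoned as "wrong" or "circular"), and ends with an incomplete formula containing literal $\cdots$, together with the admission that "verifying this identity and the Schwartz membership is the crux" and remains to be done. The mollified inverse Fourier transform you gesture at in the last lines could likely be pushed through (the $\zeta$-integration followed by Peetre's inequality would give $(X',X'')$-decay, and integration by parts in $\zeta$ gives $\theta$-decay), but you never actually carry out either the identity $K_{\fy,a}=K$ or the Schwartz bound, so as written there is a genuine gap.

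Moreover, the whole Fourier-inversion machinery is unnecessary, and that is the real missed idea. You already noticed the trick for $N=0$: multiply $K$ by $e^{-i\fy}$ to kill the phase. The paper's proof just runs that same idea in one more variable. Pick any $g \in \cS(\rr N)$ with $\int_{\rr N} g(\theta)\,\dd\theta = 1$ and set
\begin{equation*}
a(X,\theta) := K(X)\, g(\theta)\, e^{-i\fy(X,\theta)}.
\end{equation*}
Since $K\otimes g \in \cS(\rr{2d+N})$ and $e^{-i\fy}$ is a smooth unimodular function with polynomially bounded derivatives of every order (as $\fy$ is a real quadratic form), $a \in \cS(\rr{2d+N})$. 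Then
\begin{equation*}
K_{\fy,a}(X) = \int_{\rr N} e^{i\fy(X,\theta)}\, a(X,\theta)\,\dd\theta = K(X)\int_{\rr N} g(\theta)\,\dd\theta = K(X),
\end{equation*}
with the integral absolutely convergent, so no regularization is even needed. Note that no appeal to Proposition~\ref{prop:phasereduction} and no coordinate change in $X$ are required; the phase $\fy$ can be used exactly as given, $Q$-block and all. The lesson here is that when you are free to choose the amplitude, you can always absorb an arbitrary unimodular factor of polynomial growth into it, which makes the phase irrelevant, rather than trying to match the phase by Fourier inversion.
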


\begin{proof}
Again we may assume $N \geqs 1$. 
Let $g \in \cS(\rr N)$ satisfy $\int g(\theta)\,\dd\theta=1$. Then
\begin{equation*}
a(X,\theta) = K(X) g(\theta) e^{-i \fy(X,\theta)} \in\cS(\rr {2d+N})
\end{equation*}
and 
\begin{equation*}
K_{\fy,a}(X) = \int_{\rr N} e^{i \fy(X,\theta)} a(X,\theta) \,\dd\theta =\int_{\rr N} K(X) g(\theta) \,\dd\theta = K(X). 
\end{equation*}
\end{proof}

\begin{rem}\label{rem:pseudohelffer}
The space of amplitudes $\Gamma_\rho^m(\rr {2d+N})$ may seem somewhat restrictive (cf. \cite{Helffer1,Shubin1}). 
For instance the symbol $a(x,\theta) \in \Gamma^m_\rho (\rr {2d})$ of a Kohn--Nirenberg pseudodifferential operator
$a(x,D)$ is not an amplitude of three variables in $\Gamma_\rho^m (\rr {3d})$ since the derivatives of $a(x,\theta)$ do not decay with respect to $y$.
However, by picking a conical cutoff function $\psi \in C^\infty(\rr {3d})$ that is one around the cone $V_{\fy,\ep}$ (except on a compact set) defined by \eqref{eq:coneneighborhood}
for $\ep>0$ sufficiently small, it follows from the proof of \cite[Proposition~2.2.4]{Helffer1} that $a \psi \in \Gamma_\rho^m(\rr {3d})$. The operator with amplitude $a (1-\psi)$ is regularizing by Corollary \ref{cor:regularizing}, 
and by Lemma \ref{lem:reginclu} its amplitude can be absorbed into the amplitude $a \psi \in \Gamma_\rho^m(\rr {3d})$.  
Thus $a(x,D) \in \cI_\rho^m(I)$. 

Helffer's larger space of amplitudes $\Gamma_{\fy,\rho,\ep}^{m}(\rr {2d+N})$ contains symbols from $\Gamma^m_\rho(\rr {2d})$ without modification. 
On the other hand, by \cite[Lemma 2.2.2]{Helffer1} every amplitude $a \in \Gamma_{\fy,\rho,\ep}^{m}(\rr {2d+N})$ can be decomposed as $a = a_1 + a_2$ where $a_1 \in \Gamma_\rho^m(\rr {2d+N})$ and $a_2$ gives rise to a regularizing operator, and also the calculus developed in \cite{Helffer1} is constructed modulo regularizing operators.
Hence, either choice of amplitudes yields the same calculus modulo regularizing operators. We prefer to work with the phase-independent choice of $\Gamma_{\rho}^{m}(\rr {2d+N})$, eliminating the necessity of an additional cut-off argument in certain proofs.
\end{rem}

In the following we study properties of the FIOs 
$\cK_{\fy,a}$ with kernel $K_{\fy,a} \in K_\rho^m(\chi)$ for $\chi \in \Sp(d,\ro)$. 
In view of Remark \ref{rem:pseudohelffer} we may replace the assumption $a \in \Gamma_\rho^m(\rr {2d+N})$ with $a \in \Gamma_{\fy,\rho,\ep}^{m}(\rr {2d+N})$ in all results which hold modulo regularizers.

First we observe that for trivial amplitude a FIO is a metaplectic operator times a nonzero constant, see \cite[Section~5]{Hormander2} for the proof.

\begin{prop}\label{prop:metaplectic}
If $\chi \in \Sp(d,\ro)$ and $\cK_{\fy,1} \in \cI^0(\chi)$ then 
\begin{equation*}
\cK_{\fy,1} = C_\fy \mu(\chi)
\end{equation*}
where $C_\fy \in \co \setminus 0$ depends on the phase function which parametrizes the Lagrangian $\Lambda_\chi'$.
\end{prop}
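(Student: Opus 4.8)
The plan is to reduce to the reduced phase function of Proposition \ref{prop:phasereduction}, compute the kernel explicitly as an oscillatory integral with trivial amplitude, and recognize it as (a constant multiple of) the kernel of a metaplectic operator. First I would invoke Proposition \ref{prop:phasereduction} with $a=1$ to replace $\fy$ by a phase function $\fy_0$ with $Q=0$, so that $\fy_0(X,\theta)=\tfrac12\la X,FX\ra+\la L\theta,X\ra$ with $L\in\M_{2d\times n}(\ro)$ injective and $F$ symmetric; the amplitude may change from $1$ to some $b\in\Gamma_\rho^0(\rr{2d+n})$, but since the original amplitude was constant and the reduction in Proposition \ref{prop:phasereduction} comes from completing squares and integrating the resulting Gaussian in $\theta_N$, the new amplitude $b$ is again a nonzero constant times $e^{i c\la u,X\ra^2}$-type Gaussian factors which are themselves quadratic phase contributions that can be reabsorbed into $F$. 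So after this step the kernel is, up to a nonzero constant, $K_{\fy_0,1}(X)=\int_{\rr n}e^{i(\frac12\la X,FX\ra+\la L\theta,X\ra)}\,\dd\theta$, interpreted as in \eqref{eq:kernelFIO}.

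Next I would evaluate this integral. Writing $X=(x,y)$ and $L^t X=(L^t X)$, the $\theta$-integral is a Fourier transform of the constant $1$ on $\rr n$, yielding $(2\pi)^n\delta(L^t X)$ up to normalization; hence $K_{\fy_0,1}(X)=C\,e^{\frac i2\la X,FX\ra}\,\delta_{\Ker L^t}(X)$, a Gaussian density carried by the subspace $Y=\Ker L^t\subseteq\rr{2d}$. The associated Lagrangian is exactly $\Lambda=\{(X,FX+L\theta):L^tX=0\}=\Lambda_\chi'$ by hypothesis, i.e. $\Lambda_\chi'=\{(x,y,\xi,-\eta):(x,\xi)=\chi(y,\eta)\}$. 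The key point is that there is a well-known one-to-one correspondence (see \cite[Chapter~18]{Hormander0}, \cite{Folland1}) between twisted graph Lagrangians $\Lambda_\chi'$ and the projective class of metaplectic operators $\mu(\chi)$: the Schwartz kernel of $\mu(\chi)$ is precisely a Gaussian (possibly with a $\delta$-factor on a subspace, when $\chi$ has no suitable transversality) of this form, uniquely determined up to a unimodular constant, and its wave-front-type set / Lagrangian is $\Lambda_\chi'$. I would therefore compare $K_{\fy_0,1}$ with the kernel of $\mu(\chi)$ and conclude they agree up to a nonzero complex constant $C_\fy$, which proves $\cK_{\fy,1}=C_\fy\mu(\chi)$.

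An alternative, perhaps cleaner, route avoids explicit kernel formulas: one shows directly that $\cK_{\fy,1}$ intertwines Weyl quantizations according to $\chi$, i.e. that $\cK_{\fy,1}^{-1}a^w(x,D)\cK_{\fy,1}=(a\circ\chi)^w(x,D)$ for all $a$, using the composition result (Proposition \ref{prop:composition}, invoked as stated in the excerpt) together with the fact that for trivial amplitudes the phase functions compose by multiplying the associated symplectic matrices. Once $\cK_{\fy,1}$ is shown to be a (bounded, or at least continuous on $\cS$ and $\cS'$) operator with this intertwining property, Schur-type irreducibility of the metaplectic representation — equivalently, the uniqueness clause in \eqref{eq:metaplecticoperator}–\eqref{eq:metaplechomo} — forces $\cK_{\fy,1}=C_\fy\mu(\chi)$ with $C_\fy\in\co$, and $C_\fy\neq0$ because $\cK_{\fy,1}$ is not the zero operator (its kernel, being a nonzero Gaussian-type distribution, is nonzero).

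The main obstacle I anticipate is the bookkeeping in the degenerate case where $Y=\Ker L^t$ is a proper subspace of $\rr{2d}$, i.e. when $\chi$ is not of the form that makes the kernel a genuine function: here the kernel of $\mu(\chi)$ involves a delta factor and a Gaussian on $Y$, and one must check both that the oscillatory integral $K_{\fy_0,1}$ produces exactly this structure with the correct quadratic form on $Y$ (recall from the discussion after \eqref{eq:Lambdaphi} that only $F_Y=\pi_YF\pi_Y$ is determined by $\Lambda$, but that is precisely the part of the quadratic form that appears in the $\delta$-supported kernel) and that the remaining freedom $F-F_Y$ contributes only an overall nonzero constant after integration. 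The regularization of the $\theta$-integral for $m=0\not<-n$ via \eqref{eq:regularization}–\eqref{eq:Pdef} must also be handled, but this is routine given Proposition \ref{prop:phasereduction} and the explicit Gaussian nature of the integrand; I would suppress those details and refer to \cite[Section~5]{Hormander2} as the excerpt already does.
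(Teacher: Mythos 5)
The paper does not give a proof of this statement; it simply refers to H\"ormander \cite[Section~5]{Hormander2}. Your first route reproduces exactly what is done there: reduce to $Q=0$ via Proposition~\ref{prop:phasereduction}, integrate out the remaining fiber variables to obtain a chirp $e^{\frac i2\la X,FX\ra}$ times a $\delta$-density on $Y=\Ker L^t$, and identify this distribution---known up to a nonzero constant from the correspondence between twisted graph Lagrangians and kernels of metaplectic operators---with the kernel of $\mu(\chi)$. In substance you are aligned with the cited source, and the tricky point you flag (that only $F_Y=\pi_Y F\pi_Y$ is determined by $\Lambda_\chi'$, and that $F-F_Y$ must only alter the overall constant) is indeed the bookkeeping one must carry out. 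One small imprecision: after completing the square and integrating $e^{ic\theta_N^2}$ in $\theta_N$ with $a\equiv 1$, the reduced amplitude is a pure Fresnel constant; the quadratic term $-\tfrac12 q_N^{-1}\la\ell,X\ra^2$ is already absorbed into $F_0=F-q_N^{-1}\ell\ell^t$ by the phase reduction, so there is no leftover $e^{ic\la u,X\ra^2}$ factor ``to reabsorb.''

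Your alternative Egorov/Schur route has a genuine gap. It presupposes invertibility of $\cK_{\fy,1}$, which is not free at this stage of the paper: the natural device, composing with $\cK_{\psi,1}\in\cI^0(\chi^{-1})$ via Proposition~\ref{prop:composition} to land in $\cI^0(I)$, still leaves you needing to show that the resulting operator is a nonzero multiple of the identity rather than an arbitrary order-zero Weyl operator (the composition result changes the amplitude by a cutoff and asymptotic expansion, so it does not by itself preserve constant amplitude). That missing normalization is precisely the content of the proposition in the case $\chi=I$, so the argument as sketched is essentially circular unless you also perform the explicit Gaussian integral of your first route for at least one case. In short, keep the first route as the proof and regard the intertwining picture as an interpretation rather than an independent derivation.
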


Next we study the case of non-trivial amplitudes and a particular feature of the matrix $\chi \in \Sp(d,\ro)$. 

We recall that a symplectic matrix has the block form
\begin{equation}\label{eq:symplecticABCD}
\chi = 
\left(
  \begin{array}{cc}
  A & B \\
  C & D
  \end{array}
\right) \in \Sp(d,\ro)
\end{equation}
where $A, B, C, D \in \M_{d \times d}(\ro)$ satisfy
\begin{align}
& A^t C = C^t A, \quad B^t D = D^t B, \quad A B^t = B A^t, \quad C D^t = D C^t, \label{eq:sympmatrix1} \\
& A^t D - C^t B = I, \quad A D^t - B C^t = I, \label{eq:sympmatrix2} 
\end{align}
cf. \cite[Proposition 4.1]{Folland1}. 

A matrix $\chi \in \Sp(d,\ro)$ is called \emph{free}  \cite{deGosson2}
when $B \in \GL(d,\ro)$.
In this case the matrix 
\begin{equation*}
\left(
  \begin{array}{cc}
  A & B \\
  I & 0
  \end{array}
\right) \in \M_{2d \times 2d}(\ro)
\end{equation*}
is invertible with inverse 
\begin{equation*}
\left(
  \begin{array}{cc}
  0 & I \\
  B^{-1} & -B^{-1} A
  \end{array}
\right) \in \GL(2d,\ro).
\end{equation*}
This gives
\begin{align}\notag
\Lambda_\chi' 
& = \left\{ 
(Ay+B \eta, y, C y + D \eta, -\eta ) \in T^*\rr {2d}: \ (y,\eta) \in \rr {2d}
\right\} \\
& = \left\{( X,F X) \in T^*\rr {2d}: \ X \in \rr {2d} \right\}
\label{eq:Ffreeparam}
\end{align}
with 
\begin{equation}\label{eq:Ffree}
F = 
\left(
  \begin{array}{cc}
  C & D \\
  0 & -I
  \end{array}
\right)
\left(
  \begin{array}{cc}
  0 & I \\
  B^{-1} & -B^{-1} A
  \end{array}
\right) 
= \left(
  \begin{array}{cc}
  D B^{-1} & - B^{-t} \\
  -B^{-1} & B^{-1} A
  \end{array}
\right) = F^t
\end{equation}
thanks to the identities \eqref{eq:sympmatrix1} and \eqref{eq:sympmatrix2}. 

The upshot of this is as follows. 
The matrix $\chi \in \Sp(d,\ro)$ is free exactly when the corresponding twisted graph Lagrangian has the form $\Lambda_\chi' = \left\{( X,F X) \in T^*\rr {2d}: \ X \in \rr {2d} \right\}$.
By \eqref{eq:Ffreeparam} we may choose the canonical phase function $\fy(X) = \frac{1}{2} \la X,FX\ra$ to parametrize $\Lambda_\chi'$, which is reduced in the sense that $N=0$. 
The kernel \eqref{eq:kernelFIO} is then interpreted as \eqref{eq:kernelFIOdegen}, that is
\begin{equation}\label{eq:kernelfree}
K_{\fy,a} (X) = e^{\frac{i}{2} \la X, F X \ra}  a(X), \quad X \in \rr {2d}, 
\end{equation}
where $a \in \Gamma_\rho^m(\rr {2d})$. 
Any kernel in $K_{\rho}^m(\chi)$ can be reduced by means of Proposition \ref{prop:phasereduction} to one with kernel of the form \eqref{eq:kernelfree}. 
We note that the matrix $F \in \M_{2d \times 2d}(\ro)$ is uniquely defined by \eqref{eq:Ffree} in terms of the blocks of $\chi$.

Next we prove a result which allows us to compare our conditions on the phase function with the ones assumed in \cite{Asada1,Helffer1}.

\begin{lem}\label{lem:Helffercondition}
Let $\chi \in \Sp(d,\ro)$ and suppose the twisted graph Lagrangian \eqref{eq:twistedgraphlagrangian} is parametrized by 
the quadratic form defined by \eqref{eq:pform} and \eqref{eq:Phimatrix} such that \eqref{eq:fullrank} holds. 
Denote
\begin{equation}\label{eq:FLblocks}
F = \left(
\begin{array}{cc}
E &  G \\
G^t & H
\end{array}
\right), 
\quad 
L = \left(
\begin{array}{c}
P \\
R 
\end{array}
\right), 
\end{equation}
with $E, G, H \in \M_{d \times d} (\ro)$, $E$, $H$ symmetric, and $P,R \in \M_{d \times N} (\ro)$.  
Then 
\begin{equation} \label{invmatr}
\left(
\begin{array}{ccc}
G^t & H & R \\
0 & I & 0 \\
P^t & R^t & Q 
\end{array}
\right), 
\left(
\begin{array}{ccc}
I & 0 & 0 \\
E & G & P \\
P^t & R^t & Q 
\end{array}
\right)
\in \GL(2d+N,\ro)
\end{equation}
where the matrices are interpreted as having cancelled third block row and third block column if $N=0$.
\end{lem}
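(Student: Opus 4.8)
The plan is to exploit the fact that $\Lambda_\chi'$ is a \emph{graph} Lagrangian, i.e.\ it projects bijectively onto the $X=(x,y)$-space. Concretely, from the parametrization \eqref{eq:Lambdaphi} we have
\[
\Lambda_\chi' = \{(X, FX+L\theta): L^t X=0\},
\]
and since $\dim \Lambda_\chi' = 2d$ and the projection $(X,\xi)\mapsto X$ onto $\Lambda_\chi'$ has image of dimension $2d$ (because $\Lambda_\chi'$ is the twisted graph of $\chi$, hence $\{X : (X,\xi)\in\Lambda_\chi' \text{ for some }\xi\} = \rr{2d}$), the constraint $L^tX=0$ must in fact be vacuous on the relevant subspace; more precisely, the key structural input is that $\Lambda_\chi'$ contains no element of the form $(0,\xi)$ with $\xi\neq 0$. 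Indeed, $(0,\xi)\in\Lambda_\chi'$ forces $0 = \chi(y,\eta)$ with $(y,\eta)\ne 0$, impossible since $\chi$ is invertible; equivalently, $\Lambda_\chi' \cap (\{0\}\times\rr{2d}) = \{0\}$. First I would translate this vanishing into a rank statement about the phase matrix.

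The second step is to read off what $\Lambda_\chi' \cap (\{0\}\times\rr{2d}) = \{0\}$ says in terms of the blocks in \eqref{eq:FLblocks}. A point $(X,\xi)\in\Lambda_\chi'$ with $X=0$ corresponds, via \eqref{eq:Lambdaphi}, to $(X,\theta)$ with $X=0$, $L^tX=0$ (automatic), and $\xi = FX+L\theta = L\theta$; but we must also account for the $\theta$-direction in the critical set. Writing out $C_\fy$ using \eqref{eq:Phimatrix}: $(X,\theta)\in C_\fy$ iff $\fy_\theta' = L^tX + Q\theta = 0$. So the fibre over $X=0$ in $\Lambda_\chi'$ is $\{L\theta : Q\theta = 0\}$, and the graph property forces $\{\theta : Q\theta=0, L\theta=0\} = \{0\}$, which is exactly the non-degeneracy \eqref{eq:fullrank} (injectivity of $\binom{L}{Q}$) — so far nothing new. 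The extra information is that $\{L\theta : Q\theta = 0\} = \{0\}$, i.e.\ $L\theta = 0$ whenever $Q\theta = 0$; combined with injectivity of $\binom{L}{Q}$ this gives that the map $\theta \mapsto (P^t\theta \text{-part}, R^t\theta\text{-part})$... — more cleanly: $R\theta = 0$ and $Q\theta = 0$ together force $\theta = 0$ as well, since $R\theta=0, Q\theta=0$ and $P\theta$ being the remaining component of $L\theta$... Let me instead argue directly via the two matrices.

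The third step is the actual verification, which I would do by a kernel computation. For the first matrix in \eqref{invmatr}, suppose $(v,w,\theta)\in\rr d\times\rr d\times\rr N$ lies in its kernel: then $w = 0$ (second block row), $G^tv + R\theta = 0$ (first block row), and $P^tv + R^t\theta + Q\theta$... wait, $P^tv + R^tw + Q\theta = 0$, i.e.\ $P^tv + Q\theta = 0$ (using $w=0$). Now interpret $(x,y,\xi,\eta) := (v, 0, \text{something})$: the condition $G^t v + R\theta = 0$ together with $P^t v + Q\theta = 0$ says precisely that $\binom{P^t v}{G^t v}$... — these are the two halves of $L^t v' + Q\theta$ where $v' = (v,0)$, so $(v,0,\theta)\in C_\fy$ and the corresponding covector is $\fy_X'(v',\theta) = F v' + L\theta$; its $y$-component is $G^t v + R\theta = 0$ and — here I need the graph property of $\Lambda_\chi'$ under the roles of $y,\eta$ as the \emph{source} variables. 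The point $(x,y,\xi,-\eta)\in\Lambda_\chi'$ with $y$-component zero means $y=0$, $\eta = $ (the $-\eta$ slot)... Using \eqref{eq:twistedgraphlagrangian}: $(x,\xi)=\chi(y,\eta)$; having $y$-coordinate $0$ and the "$-\eta$" coordinate... this is getting intricate, so in the write-up I would phrase it as: the first matrix is injective iff $\Lambda_\chi'$ has trivial intersection with $\{y=0\}\times\{\eta$-axis in the second factor$\}$, which holds because $\chi$ maps $(0,\eta)$ to $(B\eta, D\eta)$ and $B\eta=0,\ \eta\ne0$ would contradict... no — rather, the relevant transversality is that $(y,\eta)=0$ is forced, which follows from invertibility of $\chi$ once one checks the remaining coordinates vanish. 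The analogous argument with the roles of $(x,\xi)$ and $(y,\eta)$ swapped (and $E,G,P$ playing the role of $H,G^t,R$) handles the second matrix in \eqref{invmatr}.

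\medskip

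\emph{The main obstacle.} The bookkeeping of which block equations correspond to which geometric transversality condition is the delicate part: one must carefully match the rows of the two matrices in \eqref{invmatr} against the defining equations of $C_\fy$ and $\Lambda_\fy$, and then invoke invertibility of $\chi$ (not merely \eqref{eq:fullrank}) at the right moment. I expect the cleanest route is \emph{not} the direct kernel chase sketched above but rather: observe that $\Lambda_\chi'$ being a twisted \emph{graph} means it projects isomorphically onto the first factor $T^*\rr d$ (the $(x,\xi)$-plane) \emph{and} onto the second factor (the $(y,\eta)$-plane); translate each of these two projection-isomorphism statements into the non-vanishing of the determinant of the corresponding matrix in \eqref{invmatr} by writing the projection in the coordinates $(X,\theta)$ on the parametrizing space $C_\fy\cong\rr{2d}$ and using Cramer-type reasoning. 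This reduces the lemma to two applications of the same principle: a quadratic phase parametrizing a graph Lagrangian has the two indicated invertible "Jacobian" matrices. The $N=0$ case is immediate since then $F$ itself is the symplectic-matrix data and \eqref{invmatr} reduces to invertibility of $\binom{G^t\ H}{0\ I}$ (equivalently $G^t\in\GL$) and of $\binom{I\ 0}{E\ G}$ (equivalently $G\in\GL$), both of which follow from \eqref{eq:Ffree} with $G = -B^{-t}$, $B$ invertible.
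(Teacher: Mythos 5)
Your strategy — a kernel chase on the first matrix in \eqref{invmatr}, feeding the kernel equations through the critical-set and twisted-graph structure of $\Lambda_\chi'$, and then handling the second matrix by symmetry — is exactly the paper's route. The $N=0$ case and the identification $G=-B^{-t}$ from \eqref{eq:Ffree} are also handled correctly. However, two genuine problems remain.

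First, the opening transversality claim is false. In general $\Lambda_\chi'$ \emph{does} contain nonzero elements of the form $(0,\xi)\in T^*\rr{2d}$. For $\chi=I$ we have $\Lambda_I'=\Delta\times\Delta^\perp$ and $(0,0,\xi,-\xi)\in\Lambda_I'$ for every $\xi\in\rr d$. Invertibility of $\chi$ only guarantees that $\chi(y,\eta)=0$ forces $(y,\eta)=0$; it does \emph{not} rule out nontrivial points of $\Lambda_\chi'$ sitting over $X=(x,y)=0$, because the fibre variable $\eta$ (the $-\eta$ slot) is free to be nonzero there. This false step is not just cosmetic: you were reaching for it as the ``structural input'' that would close the argument, and it will not.

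Second, and more importantly, the kernel chase is abandoned one sentence before it closes, and the missing sentence is the whole point. You correctly arrive at: $(v,w,\theta)$ in the kernel forces $w=0$, $\fy_\theta'(v,0,\theta)=P^tv+Q\theta=0$ (so $(v,0,\theta)\in C_\fy$), and $\fy_y'(v,0,\theta)=G^tv+R\theta=0$. The twist now produces the element
$(v,\,\fy_x'(v,0,\theta),\,0,\,-\fy_y'(v,0,\theta))=(v,\fy_x',0,0)\in\Lambda_\chi$, i.e. $(v,\fy_x')=\chi(0,0)=0$.
Hence $v=0$, and $\fy_x'(0,0,\theta)=P\theta=0$. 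Plugging $v=0$ back into the remaining two kernel equations gives $R\theta=0$ and $Q\theta=0$, so $L\theta=0$ and $Q\theta=0$, whence $\theta=0$ by \eqref{eq:fullrank}. Notice that the only property of $\chi$ used here is $\chi(0,0)=0$ — none of the ``projection-isomorphism'' or transversality heuristics you were reaching for are actually needed at this stage.

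One further small point: the paper dispenses with the second matrix in \eqref{invmatr} not by rerunning the argument with roles swapped, but by observing that the two matrices have equal determinant — after a simultaneous block-row and block-column swap, both reduce to $\det\begin{pmatrix}G^t & R\\ P^t & Q\end{pmatrix}$ and $\det\begin{pmatrix}G & P\\ R^t & Q\end{pmatrix}$, which are transposes of one another since $Q=Q^t$. Your plan of running the symmetric argument is of course also valid, just a bit longer.
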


\begin{proof}
First we assume $N \geqs 1$. 
By permutation of rows and columns in \eqref{invmatr}, and expansion with respect to the identity matrix it can be seen that the matrices have equal determinant. 
It suffices therefore to show that the left matrix in \eqref{invmatr} is invertible. 
Suppose 
\begin{equation*}
(x,y,\theta) \in \Ker \left(
\begin{array}{ccc}
G^t & H & R \\
0 & I & 0 \\
P^t & R^t & Q 
\end{array}
\right), 
\end{equation*}
that is $y=0$ and 
\begin{equation*}
\left\{
\begin{array}{l}
G^t x + R \theta  = 0 \\
P^t x + Q \theta = 0
\end{array}
\right. .
\end{equation*}
Since $\fy_\theta' (x,0,\theta) = P^t x + Q \theta = 0$ we have
\begin{equation*}
(x, \fy_x'(x,0,\theta), 0, - \fy_y' (x,0,\theta) ) \in \Lambda_\chi,  
\end{equation*}
that is $(x, \fy_x' (x,0,\theta) ) = \chi (0, - \fy_y' (x,0,\theta) )$. 

With the notation \eqref{eq:symplecticABCD}
we may thus write with the stipulated matrix notation 
\begin{equation*}
\left\{
\begin{array}{l}
x = - B ( G^t x + R \theta ) = 0 \\
E x  + P \theta =  -D ( G^t x + R \theta ) = 0
\end{array}
\right. .
\end{equation*}
Thus $x=0$, $P \theta=0$, $R \theta=0$ and $Q \theta=0$. 
By \eqref{eq:fullrank} $\theta=0$, which proves  
\begin{equation*}
\left(
\begin{array}{ccc}
G^t & H & R \\
0 & I & 0 \\
P^t & R^t & Q 
\end{array}
\right)
\in \GL(2d+N,\ro). 
\end{equation*}

Finally we discuss the case when $N = 0$ which means that $\chi \in \Sp(d,\ro)$ is free. 
We have to show
\begin{equation*}
\left(
\begin{array}{cc}
G^t & H \\
0 & I  \\
\end{array}
\right), 
\left(
\begin{array}{cc}
I & 0 \\
E & G \\
\end{array}
\right)
\in \GL(2d,\ro).
\end{equation*}
This follows from $G^t, G \in \GL(d,\ro)$ which is a consequence of \eqref{eq:Ffree}.
\end{proof}

\begin{rem}\label{rem:AH}
Lemma \ref{lem:Helffercondition} implies that the non-degeneracy conditions on the phase functions assumed in \cite{Helffer1} are satisfied.
Indeed if $\chi \in \Sp(d,\ro)$ is not free then any phase function $\fy$ parametrizing $\Lambda_\chi'$ satisfies
\begin{equation}\label{eq:Helffercondition}
\begin{aligned}
|(x,y,\theta)| & \lesssim |(\fy_y', y, \fy_\theta' )|, \quad (x,y,\theta) \in \rr {2d+N} \\
|(x,y,\theta)| & \lesssim |(x, \fy_x', \fy_\theta' )|, \quad (x,y,\theta) \in \rr {2d+N}. 
\end{aligned}
\end{equation}
If $\chi \in \Sp(d,\ro)$ is free then $N=0$ may be assumed, and 
\begin{equation}\label{eq:Helfferconditionfree}
\begin{aligned}
|(x,y)| & \lesssim |(\fy_y', y)|, \quad (x,y) \in \rr {2d} \\
|(x,y)| & \lesssim |(x, \fy_x' )|, \quad (x,y) \in \rr {2d}. 
\end{aligned}
\end{equation}
Also the stronger condition
\begin{equation*}
\left|
\begin{pmatrix} \Phi_{x,y}^{''} & \Phi_{x,\theta}^{''} \\ \Phi_{\theta,y}^{''} & \Phi_{\theta, \theta}^{''} \end{pmatrix} \right| \geqs \delta_0 >0
\end{equation*}
assumed in \cite{Asada1} reduces in our case to the invertibility of the matrix 
\begin{equation*}
\begin{pmatrix} G &P \\ R^t & Q \end{pmatrix}
\end{equation*}
which is granted by Lemma \ref{lem:Helffercondition}.
\end{rem}

From Remark \ref{rem:AH} and \cite[Proposition 2.1.1]{Helffer1} we obtain the following consequence. 

\begin{cor}\label{cor:FIOScont}
If $\chi \in \Sp(d,\ro)$ and $\cK \in \cI_{\rho}^m(\chi)$ then 
$\cK$ is continuous on $\cS(\rr d)$ and extends uniquely to be continuous on $\cS' (\rr d)$. 
\end{cor}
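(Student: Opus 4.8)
The plan is to obtain the statement from Helffer's continuity theorem \cite[Proposition~2.1.1]{Helffer1} after checking that the objects at hand fall under its hypotheses. First I would reduce to a concrete oscillatory integral: by Remark \ref{rem:pseudohelffer} every $\cK \in \cI_\rho^m(\chi)$ agrees, modulo a regularizing operator, with an operator $\cK_{\fy,a}$ whose amplitude lies in $\Gamma_\rho^m(\rr{2d+N})$ and whose quadratic phase $\fy$ parametrizes the twisted graph Lagrangian \eqref{eq:twistedgraphlagrangian} and satisfies the non-degeneracy \eqref{eq:fullrank}; a regularizing operator is trivially continuous on $\cS(\rr d)$ and on $\cS'(\rr d)$, so it suffices to treat $\cK_{\fy,a}$.

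Next I would invoke Remark \ref{rem:AH}. When $\chi$ is not free, any phase function parametrizing $\Lambda_\chi'$ satisfies the two-sided non-degeneracy estimates \eqref{eq:Helffercondition}; when $\chi$ is free one may take $N=0$, and \eqref{eq:Helfferconditionfree} holds. In either case these are precisely the assumptions under which \cite[Proposition~2.1.1]{Helffer1} guarantees that the operator $\cK_{\fy,a}$ with Shubin amplitude of order $m$ maps $\cS(\rr d)$ continuously into itself. This yields the first assertion.

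For the extension to $\cS'(\rr d)$ I would argue by duality. The formal adjoint $\cK_{\fy,a}^*$, defined via the sesquilinear pairing, has Schwartz kernel $(x,y) \mapsto \overline{K_{\fy,a}(y,x)}$, which is again an oscillatory integral of the form \eqref{eq:kernelFIO}: the phase is the quadratic form $(x,y,\theta) \mapsto -\fy(y,x,\theta)$, still nondegenerate in the sense of \eqref{eq:fullrank} after interchanging the $x$ and $y$ blocks, and the amplitude $(x,y,\theta) \mapsto \overline{a(y,x,\theta)}$ still lies in $\Gamma_\rho^m(\rr{2d+N})$; the associated Lagrangian is the twisted graph of $\chi^{-1}$. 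Hence $\cK_{\fy,a}^* \in \cI_\rho^m(\chi^{-1})$, so by the first part it too is continuous on $\cS(\rr d)$, and one defines $\cK_{\fy,a}$ on $\cS'(\rr d)$ by $(\cK_{\fy,a} u, f) := (u, \cK_{\fy,a}^* f)$ for $u \in \cS'(\rr d)$ and $f \in \cS(\rr d)$. Continuity $\cS'(\rr d) \to \cS'(\rr d)$ and uniqueness of the extension are then immediate from the continuity of $\cK_{\fy,a}^*$ on $\cS(\rr d)$ together with the density of $\cS(\rr d)$ in $\cS'(\rr d)$. The only real work is the bookkeeping in the last step --- verifying that the adjoint kernel is again an admissible oscillatory integral and pinning down the symplectic matrix $\chi^{-1}$; all the analytic substance is supplied by \cite[Proposition~2.1.1]{Helffer1} via Remark \ref{rem:AH}.
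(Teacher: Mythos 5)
Your proposal matches the paper's approach: the paper's proof is precisely the citation of Remark~\ref{rem:AH} (verifying Helffer's non-degeneracy conditions) together with \cite[Proposition~2.1.1]{Helffer1}, and the adjoint/duality step you give for the extension to $\cS'(\rr d)$ is the natural way to unpack that citation (cf.\ Proposition~\ref{prop:formaladjoint}, where the same adjoint computation appears). One minor redundancy: the initial reduction modulo regularizers via Remark~\ref{rem:pseudohelffer} is unnecessary, since Definition~\ref{def:FIO} already stipulates $a \in \Gamma_\rho^m(\rr{2d+N})$, which is contained in the Helffer amplitude class.
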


Therefore FIOs may be composed as continuous operators on $\cS(\rr d)$. It turns out that the composition is again an FIO associated to the composition of the involved symplectic matrices. 
The following result generalizes H\"ormander's composition theorem \cite[Proposition~5.9]{Hormander2}, restricted to real-valued phase functions, to the case of non-trivial amplitudes.

\begin{prop}\label{prop:composition}
Let $\chi_j \in \Sp(d,\ro)$ and suppose $\cK_j \in \cI_{\rho}^{m_j}(\chi_j)$, for $j=1,2$. 
Then $\cK_1 \cK_2  \in \cI_{\rho}^{m_1+m_2}(\chi_1 \chi_2)$. 
\end{prop}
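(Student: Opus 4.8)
The plan is to compute the Schwartz kernel of $\cK_1\cK_2$ explicitly as an oscillatory integral of the type \eqref{eq:kernelFIO} whose phase function parametrizes the twisted graph Lagrangian $\Lambda_{\chi_1\chi_2}'$, and then to recognize it as an element of $K_\rho^{m_1+m_2}(\chi_1\chi_2)$. Write $\cK_j=\cK_{\fy_j,a_j}$ with $a_j\in\Gamma_\rho^{m_j}(\rr {2d+N_j})$ and $\fy_j$ parametrizing $\Lambda_{\chi_j}'$. Set $X'=(x,z)\in\rr {2d}$, let $w=(y,\theta_1,\theta_2)\in\rr {d+N_1+N_2}$ be the new phase variables, and put
\begin{equation*}
\Psi(X',w)=\fy_1(x,y,\theta_1)+\fy_2(y,z,\theta_2),\qquad b(X',w)=a_1(x,y,\theta_1)\,a_2(y,z,\theta_2).
\end{equation*}
The kernel of $\cK_1\cK_2$ should then be $K_{\Psi,b}$. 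To make this rigorous I would first replace $a_j$ by the compactly supported $a_j\,\psi(\ep\,\cdot\,)$, $\psi\in\cS$ with $\psi\equiv 1$ near $0$, for which the kernels are Schwartz and the operators regularizing (Corollary \ref{cor:regularizing}); the composed kernel is then literally the absolutely convergent integral $K_{\Psi,b_\ep}$ with $b_\ep=b\,c_\ep$, $c_\ep\to 1$. Letting $\ep\to 0$ and using that $a_j\,\psi(\ep\,\cdot\,)$ is bounded in $\Gamma_\rho^{m_j}$ uniformly in $\ep$ and converges to $a_j$ in $\Gamma_\rho^{m_j+\delta}$ for every $\delta>0$, together with the continuity in Corollary \ref{cor:FIOScont}, one identifies the kernel of $\cK_1\cK_2$ with the oscillatory integral $K_{\Psi,b}$.

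It then remains to show $K_{\Psi,b}\in K_\rho^{m_1+m_2}(\chi_1\chi_2)$. The form $\Psi$ is a real quadratic form, and I would verify the non-degeneracy \eqref{eq:fullrank} for $\Psi$ by checking that the $d+N_1+N_2$ linear equations defining $C_\Psi=\{\Psi_w'=0\}$, namely $\fy_{1,\theta_1}'=0$, $\fy_{2,\theta_2}'=0$ and $\fy_{1,y}'+\fy_{2,y}'=0$, are linearly independent: inserting a hypothetical vanishing linear combination of them and using the invertibility of the two matrices associated with $\fy_1$ and with $\fy_2$ in \lemref{lem:Helffercondition}, together with \eqref{eq:fullrank} for $\fy_1$ and $\fy_2$, forces all the coefficients to vanish. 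Hence $\dim C_\Psi=\dim\Lambda_\Psi=2d$. To identify $\Lambda_\Psi$, observe that on $C_\Psi$ the relations $\fy_{1,\theta_1}'=0$ and $\fy_{2,\theta_2}'=0$ mean exactly $(x,\fy_{1,x}')=\chi_1(y,-\fy_{1,y}')$ and $(y,\fy_{2,y}')=\chi_2(z,-\fy_{2,z}')$ by the definition \eqref{eq:twistedgraphlagrangian}; combined with $\fy_{1,y}'+\fy_{2,y}'=0$ this gives $(x,\Psi_x')=\chi_1\chi_2(z,-\Psi_z')$, i.e. $(x,z,\Psi_x',-\Psi_z')\in\Lambda_{\chi_1\chi_2}'$. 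The dimension count then yields $\Lambda_\Psi=\Lambda_{\chi_1\chi_2}'$.

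The remaining point is the amplitude class. The product $b$ need not belong to $\Gamma_\rho^{m_1+m_2}$ of all of its variables when, say, $m_1<0<m_2$, so I would instead show $b\in\Gamma_{\Psi,\rho,\ep}^{m_1+m_2}(\rr {3d+N_1+N_2})$ as defined in \eqref{eq:helfferamplitude} for $\ep>0$ small. The crude Leibniz bound $|\partial^\gamma b|\lesssim\eabs{(x,y,\theta_1)}^{m_1}\eabs{(y,z,\theta_2)}^{m_2}\lesssim\eabs{(X',w)}^{|m_1|+|m_2|}$ puts $b$ in $\Gamma_0^\infty(\rr {3d+N_1+N_2})$. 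On the conic neighborhood $V_{\Psi,\ep}$ of \eqref{eq:coneneighborhood}, the Helffer non-degeneracy inequalities \eqref{eq:Helffercondition} for $\fy_1$ and $\fy_2$ (which hold by Remark \ref{rem:AH}) force, for $\ep$ sufficiently small, the size equivalences $\eabs{(x,y,\theta_1)}\asymp\eabs{(y,z,\theta_2)}\asymp\eabs{(X',w)}$; since the derivatives falling on $a_1$ and those falling on $a_2$ add up to $\gamma$, the Leibniz rule then gives $|\partial^\gamma b(X',w)|\lesssim\eabs{(X',w)}^{m_1+m_2-\rho|\gamma|}$ on $V_{\Psi,\ep}$, so that $b\in\Gamma_{\Psi,\rho,\ep}^{m_1+m_2}$. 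Now Remark \ref{rem:pseudohelffer} (\cite[Lemma~2.2.2]{Helffer1}) decomposes $b=b_1+b_2$ with $b_1\in\Gamma_\rho^{m_1+m_2}(\rr {3d+N_1+N_2})$ and $b_2$ giving a regularizing operator; since $\Psi$ parametrizes $\Lambda_{\chi_1\chi_2}'$ we get $K_{\Psi,b_1}\in K_\rho^{m_1+m_2}(\chi_1\chi_2)$, while by \lemref{lem:reginclu} the Schwartz kernel $K_{\Psi,b_2}$ is absorbed into an amplitude for the same phase $\Psi$. Therefore $\cK_1\cK_2\in\cI_\rho^{m_1+m_2}(\chi_1\chi_2)$.

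I expect the main difficulty to be twofold. First, the amplitude bookkeeping: establishing the size equivalences $\eabs{(x,y,\theta_1)}\asymp\eabs{(y,z,\theta_2)}\asymp\eabs{(X',w)}$ on a genuine conic neighborhood of the critical set --- this is precisely where the symplectic structure enters, through \lemref{lem:Helffercondition} and \eqref{eq:Helffercondition} --- and tracking the order $m_1+m_2$ through the Leibniz expansion on $V_{\Psi,\ep}$. Second, the rigorous justification of the limit $\ep\to 0$ identifying the kernel of $\cK_1\cK_2$ with $K_{\Psi,b}$, since composition of operators is not continuous for the weak topologies, so one must exploit uniform symbol bounds and the mapping properties of Corollary \ref{cor:FIOScont}.
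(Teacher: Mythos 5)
Your proof takes essentially the same route as the paper's: you form the oscillatory-integral kernel with the summed phase $\Psi=\fy_1+\fy_2$ and product amplitude $b=a_1\,a_2$, identify $\Lambda_\Psi=\Lambda_{\chi_1\chi_2}'$, verify the non-degeneracy \eqref{eq:fullrank} via \lemref{lem:Helffercondition}, and control the amplitude on a conic neighborhood of the critical set through the size equivalences forced by \eqref{eq:Helffercondition}; the paper cuts off the amplitude directly via \lemref{lem:regularizing} while you invoke Helffer's splitting of $\Gamma_{\Psi,\rho,\ep}^{m_1+m_2}$ from \cite[Lemma~2.2.2]{Helffer1}, but as Remark \ref{rem:pseudohelffer} observes these two devices are equivalent modulo regularizers. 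One small correction: from $(x,\Psi_x')=\chi_1\chi_2(z,-\Psi_z')$ and the definition \eqref{eq:twistedgraphlagrangian} the resulting membership, matching the form of $\Lambda_\Psi$, is $(x,z,\Psi_x',\Psi_z')\in\Lambda_{\chi_1\chi_2}'$ rather than $(x,z,\Psi_x',-\Psi_z')$; this is a sign typo and does not affect the conclusion $\Lambda_\Psi=\Lambda_{\chi_1\chi_2}'$.
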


\begin{proof}
The proof is inspired by that of \cite[Proposition~5.9]{Hormander2} and that of \cite[Proposition 2.2.3]{Helffer1}. 

Corollary \ref{cor:FIOScont} implies that the composition $\cK_1 \cK_2: \cS(\rr d) \to \cS(\rr d)$ is a well defined continuous operator.
First we assume that $N_1, N_2 \geqs 1$, that is, none of $\chi_1,\chi_2 \in \Sp(d,\ro)$ is a free symplectic matrix. 

Let $\fy_j$ be a phase function that parametrizes the twisted graph Lagrangian $\Lambda_{\chi_j}'$ for $j=1,2$, respectively. 
By Proposition \ref{prop:phasereduction} we may assume that $\fy_j$ is a quadratic form defined as in \eqref{eq:pform} and \eqref{eq:Phimatrix} with $F_j \in \M_{2d \times 2d}(\ro)$, $L_j \in \M_{2d \times N_j}(\ro)$ and $Q_j= 0$, for $j=1,2$. 
Each Lagrangian $\Lambda_{\chi_j}'$ has the form \eqref{eq:Lambdaphi}. 
The kernel of $\cK_1 \cK_2$ is
\begin{equation}\label{eq:oscintcomp}
K(x,y) = \int_{\rr {d + N_1+N_2}} e^{i \left( \fy_1(x,z,\theta) + \fy_2(z,y,\xi) \right)} a_1(x,z,\theta) \, a_2(z,y,\xi) \, \dd z \, \dd \theta \, \dd \xi, 
\end{equation}
$x,y \in \rr d$, where we view $(z,\theta,\xi) \in \rr {d+N_1+N_2}$ as the covariable.
First we show that \eqref{eq:oscintcomp} is well defined as an oscillatory integral. 

The amplitude is 
\begin{equation*}
b(x,y,z,\theta,\xi) = a_1(x,z,\theta) \, a_2(z,y,\xi) 
\end{equation*}
which we at first consider an element in $\Gamma_0^{|m_1| + |m_2|} (\rr {3d+N_1+N_2})$. 
The phase function is 
\begin{equation*}
\fy(x,y,z,\theta,\xi) = \fy_1(x,z,\theta) + \fy_2(z,y,\xi)
\end{equation*}
so the corresponding Lagrangian is 
\begin{align*}
\Lambda & = \{ (x,y, \fy_{1,x}' (x,z,\theta), \fy_{2,y}' (z,y,\xi)) \in T^*\rr {2d} : \\
& \qquad  \fy_{1,y}' (x,z,\theta) + \fy_{2,x}' (z,y,\xi) = \fy_{1,\theta}' (x,z,\theta) = \fy_{2,\xi}' (z,y,\xi) = 0 \}.
\end{align*}

Twisting the Lagrangian and suppressing variables give
\begin{equation*}
\Lambda' = \{ (x,\fy_{1,x}', y, -\fy_{2,y}' ) \in T^*\rr d \times T^*\rr d : \  \fy_{1,y}'+ \fy_{2,x}' = \fy_{1,\theta}' = \fy_{2,\xi}' = 0 \}.
\end{equation*}
Since $(z,  \fy_{2,x}', y, -\fy_{2,y}') \in \Lambda_{\chi_2}$
we have
\begin{equation*}
\chi_2(y, -\fy_{2,y}' ) = (z,  \fy_{2,x}') = (z,  -\fy_{1,y}')
\end{equation*}
which gives 
\begin{equation*}
\chi_1 \chi_2(y, -\fy_{2,y}' ) = (x,  \fy_{1,x}')  
\end{equation*}
since $(x,  \fy_{1,x}', z, -\fy_{1,y}') \in \Lambda_{\chi_1}$.
This means that $\Lambda' = \Lambda_{\chi_1 \chi_2}$, and hence $\fy$ parametrizes the twisted graph Lagrangian $\Lambda = \Lambda_{\chi_1 \chi_2}'$.  

Next we verify condition \eqref{eq:fullrank} for the matrix that defines $\fy$, denoted as in \eqref{eq:pform} and \eqref{eq:Phimatrix}
with
$F \in \M_{2d \times 2d}(\ro)$, $L \in \M_{2d \times (d+N_1+N_2)}(\ro)$ and $Q \in \M_{(d+N_1+N_2) \times (d+N_1+N_2)}(\ro)$.
We adopt the block matrix notation \eqref{eq:FLblocks} for $F_j$ and $L_j$, with index $j=1,2$. 

We have 
\begin{equation}\label{eq:blockcomposition}
\left(
\begin{array}{c}
L \\
Q 
\end{array}
\right) 
= 
\left(
\begin{array}{ccc}
G_1 & P_1 & 0 \\
G^t_2 & 0 & R_2 \\
H_1 + E_2  & R_1 & P_2 \\
R_1^t & 0 & 0 \\
P_2^t & 0 & 0 
\end{array}
\right) \in \M_{(3d+N_1+N_2) \times (d+N_1+N_2)}(\ro). 
\end{equation}

By Lemma \ref{lem:Helffercondition}  
\begin{equation*}
\left(
\begin{array}{cc}
G_1 & P_1 \\
R_1^t & 0 
\end{array}
\right) 
\in \GL(d+N_1, \ro), 
\quad
\left(
\begin{array}{cc}
G_2^t& R_2 \\
P_2^t & 0 
\end{array}
\right) 
\in \GL(d+N_2, \ro).
\end{equation*}
This implies the injectivity of the matrix \eqref{eq:blockcomposition}, and it follows that  
\eqref{eq:oscintcomp} is a well defined oscillatory integral, provided $N_1, N_2 \geqs 1$. 

If $N_1+N_2 = 1$ then one of $\chi_1$ or $\chi_2$ is a free symplectic matrix, that is $B \in \GL(d,\ro)$ in the block decomposition \eqref{eq:symplecticABCD}. 
The argument above goes through verbatim, except that some block matrices of the matrix \eqref{eq:blockcomposition} are cancelled when 
one of the matrices $L_j$ is non-existent. 
If $\chi_2$ is free then 
\begin{equation*}
\left(
\begin{array}{c}
L \\
Q 
\end{array}
\right) 
= 
\left(
\begin{array}{cc}
G_1 & P_1 \\
G_2^t & 0 \\
H_1 + E_2 & R_1 \\
R_1^t & 0 
\end{array}
\right)\in \M_{(3d+N_1) \times (d+N_1)}(\ro) 
\end{equation*}
which is injective by the arguments above, and 
similarly the corresponding matrix is injective if $\chi_1$ is free. 
Thus \eqref{eq:oscintcomp} is a well defined oscillatory integral if $N_1+N_2 = 1$. 

Finally we assume $N_1 = N_2 = 0$, that is both $\chi_1$ and $\chi_2$ are free symplectic matrices.
In this case the matrix \eqref{eq:blockcomposition} shrinks to 
\begin{equation*}
\left(
\begin{array}{c}
L \\
Q 
\end{array}
\right) 
= 
\left(
\begin{array}{c}
G_1 \\
G_2^t \\
H_1 + E_2
\end{array}
\right)\in \M_{3d \times d}(\ro) 
\end{equation*}
which is injective since $G_1 = - B_1^{-t} \in \GL(d,\ro)$
due to \eqref{eq:Ffree}, where we use the notation 
\eqref{eq:symplecticABCD} for $\chi_1$ with corresponding block matrices $A_1,B_1,C_1,D_1 \in \M_{d \times d}(\ro)$.
Thus \eqref{eq:oscintcomp} is a well defined oscillatory integral also if $N_1+N_2 = 0$. 

It remains to prove $\cK_1 \cK_2 = \cK_{\varphi,a} + \cR$ where $\cR$ is regularizing,  and
$a \in \Gamma^{m_1+m_2}_{\rho}(\rr {3d + N_1 + N_2})$.
In fact, by Lemma \ref{lem:reginclu} this would entail 
$\cK_{\varphi,a} + \cR = \cK_{\varphi,c} \in \cI_{\rho}^{m_1+m_2}(\chi_1 \chi_2)$ for a modified amplitude $c \in \Gamma^{m_1+m_2}_{\rho}(\chi_1 \chi_2)$. 

Let $\ep>0$ be arbitrary. 
Calculating modulo a regularizing $\cR$ we use Lemma \ref{lem:regularizing}
to multiply the amplitude $b \in \Gamma_0^{|m_1| + |m_2|} (\rr {3d+N_1+N_2})$ with a smooth conical cut-off function $\psi \in C^\infty(\rr {3d+N_1+N_2})$ such that
\begin{equation*}
a = b \psi \in \Gamma_0^{|m_1| + |m_2|} (\rr {3d+N_1+N_2})
\end{equation*}
has support contained in 
\begin{equation*}
V_{\fy,\ep} = \{ (x,y,z,\theta,\xi) \in \rr {3d+N_1+N_2}: \ | \fy_{z,\theta,\xi}' (x,y,z,\theta,\xi)| < \ep |(x,y,z,\theta,\xi)| \}.
\end{equation*}

Following the proof of \cite[Proposition 2.2.3]{Helffer1}, it can be seen that this implies $a \in \Gamma_\rho^{m_1 + m_2} (\rr {3d+N_1+N_2})$ when $\ep>0$ is sufficiently small. 
This is shown by showing 
\begin{equation*}
\eabs{(x,z,\theta)} \asymp \eabs{(z,y,\xi)} \asymp \eabs{(x,y,z,\theta,\xi)} 
\end{equation*}
when $(x,y,z,\theta,\xi) \in \supp (a)$. 
\end{proof}

The next result concerns the formal adjoint of an FIO.

\begin{prop}\label{prop:formaladjoint}
If $\chi \in \Sp(d,\ro)$, $N \geqs 0$, $a \in \Gamma_\rho^m(\rr {2d + N})$ and $\cK_{\fy,a} \in \cI_\rho^m(\chi)$
then $\cK_{\fy,a}^* = \cK_{\psi,b} \in \cI_\rho^m(\chi^{-1})$
where $\psi(x,y,\theta)=-\varphi(y,x,\theta)$ and $b(x,y,\theta) = \overline{a(y,x,\theta})$. 
\end{prop}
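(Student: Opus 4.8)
By Corollary \ref{cor:FIOScont} the operator $\cK_{\fy,a}$ is continuous on $\cS(\rr d)$, so its formal adjoint $\cK_{\fy,a}^*$ is well defined. The plan is to compute the Schwartz kernel of $\cK_{\fy,a}^*$ explicitly and then check that the resulting oscillatory integral fits Definition \ref{def:FIOkernel} for the matrix $\chi^{-1} \in \Sp(d,\ro)$. In general, if a continuous operator $\cK: \cS(\rr d) \to \cS'(\rr d)$ has kernel $K \in \cS'(\rr {2d})$, then $\cK^*$ has kernel $K^*$ given, at the level of distributions, by $K^*(x,y) = \overline{K(y,x)}$. Since $K_{\fy,a} \in \cS'(\rr {2d})$ is well defined, so is $K^*$. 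First I would check that $K^* = K_{\psi,b}$; written out via the regularization \eqref{eq:regularization} of \eqref{eq:kernelFIO}, this is a routine manipulation --- the regularization operator $P$ of \eqref{eq:Pdef} is transformed in the obvious way under the substitution $(x,y) \mapsto (y,x)$ together with complex conjugation --- and it reduces to the pointwise identity $\overline{K_{\fy,a}(y,x)} = \int_{\rr N} e^{-i\fy(y,x,\theta)} \overline{a(y,x,\theta)}\,\dd\theta = K_{\psi,b}(x,y)$.

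It then remains to verify that the pair $(\psi,b)$ is admissible in the sense of Definition \ref{def:FIOkernel}. That $b \in \Gamma_\rho^m(\rr {2d+N})$ is immediate, since the Shubin estimates \eqref{eq:shubinestimate} are invariant under complex conjugation and under the permutation $(x,y) \mapsto (y,x)$. For the phase, I would write $X = (x,y)$ and let $\tau = \left( \begin{smallmatrix} 0 & I_d \\ I_d & 0 \end{smallmatrix} \right)$ denote the block swap, so that $\psi(X,\theta) = -\fy(\tau X, \theta)$ is again a real quadratic form; in the block notation \eqref{eq:Phimatrix} its defining matrix is obtained from $\Phi$ by replacing $F, L, Q$ with $-\tau F \tau$, $-\tau L$, $-Q$. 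Since $\tau$ is invertible, the column block $\binom{-\tau L}{-Q}$ is injective if and only if $\binom{L}{Q}$ is, so the non-degeneracy condition \eqref{eq:fullrank} is preserved.

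The substantive point is to identify the Lagrangian of $\psi$. Differentiating $\psi(X,\theta) = -\fy(\tau X, \theta)$ gives $\psi_X'(X,\theta) = -\tau \fy_X'(\tau X, \theta)$ and $\psi_\theta'(X,\theta) = -\fy_\theta'(\tau X, \theta)$, so after the substitution $W = \tau X$
\begin{equation*}
\Lambda_\psi = \{ (\tau W, -\tau \fy_X'(W,\theta)) \in T^* \rr {2d} : \ \fy_\theta'(W,\theta) = 0 \}.
\end{equation*}
Writing a generic element of $\Lambda_\fy = \Lambda_\chi'$ as $(x,y,\xi,-\eta)$ with $(x,\xi) = \chi(y,\eta)$, the associated element of $\Lambda_\psi$ is $(y,x,\eta,-\xi)$; and since $(x,\xi) = \chi(y,\eta)$ is equivalent to $(y,\eta) = \chi^{-1}(x,\xi)$, this is precisely the defining description \eqref{eq:twistedgraphlagrangian} of $\Lambda_{\chi^{-1}}'$. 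Hence $\psi$ parametrizes $\Lambda_{\chi^{-1}}'$, so $K_{\psi,b} \in K_\rho^m(\chi^{-1})$ and therefore $\cK_{\fy,a}^* = \cK_{\psi,b} \in \cI_\rho^m(\chi^{-1})$.

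The argument is essentially bookkeeping; the only place that needs genuine care is the last paragraph, where the interplay of the twist (the minus sign on $\eta$) with the swap $x \leftrightarrow y$ must be tracked precisely so that one lands on $\chi^{-1}$ and not, say, on $\chi^t$ or $(\chi^{-1})^t$. The degenerate case $N = 0$ (i.e. $\chi$ free) is covered by the same computation, with the kernel interpreted via \eqref{eq:kernelFIOdegen} and the $\theta$-variable absent.
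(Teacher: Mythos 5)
Your argument is correct and follows essentially the same route as the paper: read off the kernel of the adjoint as $\overline{K_{\fy,a}(y,x)}$, identify $\psi = -\fy\circ(\tau\times\mathrm{id})$ and $b = \overline{a\circ(\tau\times\mathrm{id})}$, and then track how the twist interacts with the swap $x\leftrightarrow y$ to land on $\Lambda_{\chi^{-1}}'$. You are somewhat more careful than the paper in explicitly verifying that the non-degeneracy condition and the Shubin estimates survive the change of variables, and in addressing the $N=0$ case; the paper takes those points for granted.
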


\begin{proof}
By definition the formal adjoint satisfies
\begin{equation*}
(\cK_{\fy,a} f, g) = (f, \cK_{\fy,a}^* g), \quad f,g \in \cS(\rr d). 
\end{equation*}
The left hand side is the oscillatory integral 
\begin{equation*}
\int_{\rr {2d+N}} e^{i \fy(x,y,\theta)} a(x,y,\theta) \, f(y) \, \overline{g(x)} \, \dd \theta \, \dd x \, \dd y 
\end{equation*}
from which it follows that $\cK_{\fy,a}^*$ has kernel
\begin{equation*}
K(x,y) =  \int_{\rr {N}} e^{i \psi(x,y,\theta)} b(x,y,\theta)  \, \dd \theta. 
\end{equation*}
It remains to show that the phase function $\psi(x,y,\theta) = -\fy(y,x,\theta)$ parametrizes $\Lambda_{\chi^{-1}}'$. 
The Lagrangian corresponding to the phase function $\psi$ is
\begin{align*}
\Lambda_\psi
& = \{ (x,y, \psi_x'(x,y,\theta), \psi_y'(x,y,\theta) ) \in T^* \rr {2d} : \ \psi_\theta'(x,y,\theta) = 0 \} \\
& = \{ (x,y, -\fy_y'(y,x,\theta), -\fy_x'(y,x,\theta) ) \in T^* \rr {2d} : \ \fy_\theta'(y,x,\theta) = 0 \} \\
& = \{ (x,y,\xi,\eta) \in T^* \rr {2d} : \ (y,x,-\eta, -\xi) \in \Lambda_\chi' \} \\
& = \{ (x,y,\xi,\eta) \in T^* \rr {2d} : \ (x,\xi) = \chi^{-1} (y,-\eta) \} \\
& = \Lambda_{\chi^{-1}}'. 
\end{align*}
\end{proof}

\subsection{Factorization of FIOs}

In this section we prove that the FIOs admit a factorization into metaplectic and pseudodifferential operators, see \cite[Theorem~1.3]{CGNR} for a related result where modulation spaces are used for amplitudes. 

We need a preparatory result that will be useful also later. 
Here $z=(z_1,z_2) \in \rr {2d}$ where $z_1,z_2 \in \rr d$. 

\begin{lem}\label{lem:FBIsymplectic}
If $u \in \cS'(\rr d)$, $\chi \in \Sp(d,\ro)$ and $g \in \cS(\rr d) \setminus 0$ then for all $(x,\xi) \in T^* \rr d$
\begin{equation*}
\cT_{\mu(\chi) g} (\mu(\chi) u) (x,\xi)
= e^{\frac{i}{2} \left(  \la x, \xi \ra - \la \chi^{-1}(x,\xi)_1, \chi^{-1}(x,\xi)_2 \ra \right)} \cT_g u(\chi^{-1} (x,\xi)). 
\end{equation*}
\end{lem}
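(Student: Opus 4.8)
The plan is to reduce the identity to a known intertwining property of metaplectic operators via the Weyl calculus, avoiding explicit oscillatory integral manipulations. Recall that $\cT_g u(x,\xi) = e^{i\la x,\xi\ra} \cV_g u(x,\xi)$ and that $\cV_g u(x,\xi) = (2\pi)^{-d/2}(u, M_\xi T_x g)$. The operator $M_\xi T_x$ is, up to a phase factor of modulus one, the Heisenberg--Weyl (phase space shift) operator $\pi(x,\xi)$ with $\pi(x,\xi) g(y) = e^{i\la y - x/2, \xi\ra} g(y-x)$; indeed $M_\xi T_x = e^{-\frac{i}{2}\la x,\xi\ra}\pi(x,\xi)$, and correspondingly $\cT_g u(x,\xi) = e^{\frac{i}{2}\la x,\xi\ra}(2\pi)^{-d/2}(u,\pi(x,\xi)g)$. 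The key algebraic fact is the metaplectic covariance of the phase space shifts: for $\chi \in \Sp(d,\ro)$ one has $\mu(\chi)\pi(z)\mu(\chi)^{-1} = \pi(\chi z)$ for $z \in T^*\rr d$, which follows from \eqref{eq:metaplecticoperator} applied to the symbols of rank-one type, or equivalently from the fact that conjugation by $\mu(\chi)$ transforms the Weyl symbol $a$ into $a\circ\chi$.

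First I would rewrite the left-hand side as
\begin{equation*}
\cT_{\mu(\chi)g}(\mu(\chi)u)(x,\xi) = e^{\frac{i}{2}\la x,\xi\ra}(2\pi)^{-d/2}\left(\mu(\chi)u,\, \pi(x,\xi)\mu(\chi)g\right).
\end{equation*}
Then, using $\pi(x,\xi)\mu(\chi) = \mu(\chi)\pi(\chi^{-1}(x,\xi))$ and the unitarity of $\mu(\chi)$ on $L^2$, the pairing collapses to $(u, \pi(\chi^{-1}(x,\xi))g)$. Writing $(w_1,w_2) = \chi^{-1}(x,\xi)$, this last pairing equals $(2\pi)^{d/2} e^{-\frac{i}{2}\la w_1,w_2\ra}\cT_g u(w_1,w_2)$. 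Collecting the phase factors $e^{\frac{i}{2}\la x,\xi\ra}$ from the definition on the left and $e^{-\frac{i}{2}\la w_1,w_2\ra}$ from the reconstruction on the right yields precisely the stated prefactor $e^{\frac{i}{2}(\la x,\xi\ra - \la \chi^{-1}(x,\xi)_1, \chi^{-1}(x,\xi)_2\ra)}$.

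Since $u \in \cS'(\rr d)$ and $\mu(\chi)$ is continuous on $\cS'$, while $\pi(z)$ and $\mu(\chi)$ preserve $\cS$, all pairings above are well defined, and no regularization is needed; the identity then holds pointwise in $(x,\xi)$ by $\cT_g u \in C^\infty$ (Proposition \ref{prop:Swdchar}). The main obstacle is bookkeeping of the phase conventions: one must be careful about the half-integer phase $e^{-\frac{i}{2}\la x,\xi\ra}$ relating $M_\xi T_x$ to $\pi(x,\xi)$ and the order of translation versus modulation, since a sign error there propagates into the exponent. A cleaner alternative that sidesteps some of this is to test the claimed identity against $f \in \cS(\rr d)$ using the adjoint relation $(\cT_h^* U, f) = (U, \cT_h f)$ together with the reconstruction formula $\|g\|_{L^2}^{-2}\cT_g^*\cT_g = \mathrm{id}$, reducing everything to the scalar intertwining $\mu(\chi)\pi(z) = \pi(\chi z)\mu(\chi)$ and a change of variables $z \mapsto \chi^{-1} z$ in the phase space integral; the Jacobian is $1$ since $\chi$ is symplectic, and the quadratic phase $\la z_1,z_2\ra$ transforms exactly into $\la(\chi^{-1}z)_1,(\chi^{-1}z)_2\ra$ under that substitution.
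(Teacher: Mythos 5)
Your argument is correct and is essentially the paper's: both rest on the symplectic covariance \eqref{eq:metaplecticoperator} of the Weyl calculus, applied to the Weyl symbol of a phase-space shift. The paper works directly with the Weyl symbol $a_{x,\xi}$ of $T_x M_\xi$ and computes $a_{x,\xi}\circ\chi$ via the block form of $\chi^{-1}$, whereas you factor $T_x M_\xi$ through the symmetrized shift $\pi(x,\xi)$, whose covariance $\mu(\chi)\pi(z)\mu(\chi)^{-1}=\pi(\chi z)$ has no extra phase; this is a slightly cleaner bookkeeping but not a different idea. One small slip: with your definition $\pi(x,\xi)g(y)=e^{i\la y-x/2,\xi\ra}g(y-x)$ one has $M_\xi T_x = e^{+\frac{i}{2}\la x,\xi\ra}\pi(x,\xi)$, not $e^{-\frac{i}{2}\la x,\xi\ra}\pi(x,\xi)$; your subsequent formula $\cT_g u(x,\xi)=e^{\frac{i}{2}\la x,\xi\ra}(2\pi)^{-d/2}(u,\pi(x,\xi)g)$ is nevertheless the correct one, so the typo does not propagate.
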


\begin{proof}
If for fixed $x,\xi \in \rr d$ we define
\begin{equation*}
a_{x,\xi} (y,\eta) = e^{-\frac{i}{2} \la x,\xi \ra + i (\la \xi,y \ra - \la x,\eta \ra)}, \quad y,\eta \in \rr d, 
\end{equation*}
then $a_{x,\xi}^w(x,D) = T_x M_\xi$. 
Decomposing $\chi \in \Sp(d,\ro)$ into blocks as in \eqref{eq:symplecticABCD}
the inverse is 
\begin{equation}\label{eq:sympinvABCD}
\chi^{-1} = 
\left(
  \begin{array}{cc}
  D^t & -B^t \\
  -C^t & A^t
  \end{array}
\right) \in \Sp(d,\ro), 
\end{equation}
cf. \cite{Folland1}.
This gives
\begin{align*}
a_{x,\xi} ( \chi(y,\eta)) 
& = e^{- \frac{i}{2} \la x,\xi \ra + i (\la \xi,A y + B \eta \ra - \la x, C y + D \eta \ra)} \\
& = e^{- \frac{i}{2} \la x,\xi \ra + i (\la y, A^t \xi - C^t x \ra - \la \eta, D^t x - B^t \xi \ra)} \\
& = e^{-\frac{i}{2} \left( \la x,\xi \ra - \la \chi^{-1}(x,\xi)_1, \chi^{-1}(x,\xi)_2 \ra \right)}
a_{\chi^{-1}(x,\xi)} (y,\eta). 
\end{align*}

Using the symplectic invariance \eqref{eq:metaplecticoperator} we obtain finally
\begin{align*}
\cT_{\mu(\chi) g} (\mu(\chi) u) (x,\xi)
& = (2 \pi)^{-d/2} (u, \mu(\chi)^{-1} T_x M_\xi \mu(\chi) g ) \\
& = (2 \pi)^{-d/2} (u, (a_{x,\xi} \circ \chi)^w(x,D) g ) \\
& = e^{\frac{i}{2} \left( \la x,\xi \ra - \la \chi^{-1}(x,\xi)_1, \chi^{-1}(x,\xi)_2 \ra \right)}
(2 \pi)^{-d/2} (u, a_{\chi^{-1}(x,\xi)}^w(x,D) g ) \\
& = e^{\frac{i}{2} \left(  \la x, \xi \ra - \la \chi^{-1}(x,\xi)_1, \chi^{-1}(x,\xi)_2 \ra \right)} \cT_g u(\chi^{-1} (x,\xi)). 
\end{align*}
\end{proof}

Concerning factorization of FIOs we first treat the case $\chi=\J$, where 
\begin{equation} \label{symplJ}
\J = 
\left(
\begin{array}{cc}
0 & I_d \\
-I_d & 0 
\end{array}
\right) \in \Sp(d,\ro)
\end{equation}
which appears frequently in symplectic linear algebra \cite{Folland1}. 
Notice that $\J$ is free and 
\begin{equation} \label{eq:metaplecticFourier}
\mu(\J ) = \cF.
\end{equation}

\begin{lem}\label{lem:Jfactorization}
If $\cK \in \cI_{\rho}^m(\J)$
then there exists $b \in \Gamma_\rho^m(\rr {2d})$ such that
\begin{equation*}
\cK = b^w(x,D)  \mu(\J) = \mu(\J) (b \circ \J)^w(x,D). 
\end{equation*}
\end{lem}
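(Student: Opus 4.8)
The plan is to first dispense with the second identity, which follows from the first by the metaplectic covariance \eqref{eq:metaplecticoperator}: once $\cK = b^w(x,D)\mu(\J)$ is established, writing $b^w(x,D)\mu(\J) = \mu(\J)\bigl(\mu(\J)^{-1}b^w(x,D)\mu(\J)\bigr)$ and applying \eqref{eq:metaplecticoperator} with $\chi=\J$ gives $\cK = \mu(\J)(b\circ\J)^w(x,D)$. So the real task is to construct $b \in \Gamma_\rho^m(\rr{2d})$ with $\cK = b^w(x,D)\mu(\J)$.

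Next I would use that $\J$ is free to normalize the phase. By the reduction discussed around \eqref{eq:kernelfree} --- Proposition \ref{prop:phasereduction} together with the explicit matrix \eqref{eq:Ffree}, which for $\chi=\J$ produces the reduced kernel $e^{-i\la x,y\ra}a(x,y)$ --- we may assume $\cK = \cK_{\fy,a}$ with $K_{\fy,a}(x,y) = e^{-i\la x,y\ra}a(x,y)$ and $a \in \Gamma_\rho^m(\rr{2d})$. Since $\mu(\J) = \cF$ by \eqref{eq:metaplecticFourier}, the claim is equivalent to showing that $\cK_{\fy,a}\cF^{-1}$ is a Weyl pseudodifferential operator with symbol in $\Gamma_\rho^m(\rr{2d})$. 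Composing Schwartz kernels ($\cF^{-1}$ has kernel $(2\pi)^{-d/2}e^{i\la x,y\ra}$, the $z$-integral below being an oscillatory integral), the kernel of $\cK_{\fy,a}\cF^{-1}$ is
\[
M(x,y) = (2\pi)^{-d/2}\int_{\rr d} a(x,z)\, e^{-i\la x-y,z\ra}\, \dd z,
\]
which, after the substitution $z\mapsto-\xi$, is the Schwartz kernel of the Kohn--Nirenberg (left) quantization of the symbol $(x,\xi)\mapsto a(x,-\xi)$, up to a harmless multiplicative constant, and that symbol lies in $\Gamma_\rho^m(\rr{2d})$.

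The final step is to pass from the left quantization to the Weyl quantization without leaving $\Gamma_\rho^m(\rr{2d})$. This is the quantization independence of the Shubin calculus: the passage from a left symbol $c$ to its Weyl symbol $b$ --- realized as an oscillatory integral with a non-degenerate quadratic phase, formally $b = e^{\frac{i}{2}\la D_x,D_\xi\ra}c$ --- maps $\Gamma_\rho^m(\rr{2d})$ continuously into itself, see \cite{Shubin1, Helffer1} (the same mechanism underlies Remark \ref{rem:pseudohelffer}); alternatively one verifies the Shubin estimates \eqref{eq:shubinestimate} for $b$ directly, by differentiating under the oscillatory integral, regularizing via repeated integration by parts in the phase variables, and using Peetre's inequality \eqref{eq:Peetre} to absorb the base-point shift into the weight $\eabs{(x,\xi)}$, exactly as in the proof of Proposition \ref{prop:phasereduction}. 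Either route yields $b \in \Gamma_\rho^m(\rr{2d})$ with $\cK_{\fy,a}\cF^{-1} = b^w(x,D)$, i.e. $\cK = b^w(x,D)\mu(\J)$. I expect this last step --- controlling the quantization change inside the Shubin class, i.e. showing the oscillatory integral for $b$ still satisfies the Shubin estimates of order $m$ uniformly in the base point --- to be the main obstacle; the rest is routine bookkeeping with Proposition \ref{prop:phasereduction} and \eqref{eq:metaplecticoperator}.
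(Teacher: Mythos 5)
Your proposal follows essentially the same route as the paper's proof: reduce the phase to $-\la x,y\ra$ using that $\J$ is free, compose $\cK$ with $\cF^{-1}$ (the paper uses $\cK_1$ with kernel $e^{i\la x,y\ra}=(2\pi)^{d/2}\cF^{-1}$, which is the same up to a constant) to obtain a Kohn--Nirenberg operator with symbol $(x,\xi)\mapsto a(x,-\xi)\in\Gamma_\rho^m(\rr{2d})$, and then change to Weyl quantization. One small point: you cite \cite[Theorem~23.1]{Shubin1} for the quantization change, but that theorem requires $\rho>0$; the paper explicitly flags this in Remark~\ref{rem:quantization} and supplies a separate argument there (via Lemma~\ref{lem:FBIsymplectic} and the FBI-characterization of Shubin symbols, valid also for $\rho=0$), so your ``direct estimates'' fallback is the right instinct but would need to be carried out to cover $\rho=0$.
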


\begin{proof}
Let $K \in K^m_\rho(\J)$.  
Since $\J$ is free, we may assume after a reduction of fibre variables, that for some $a \in \Gamma_\rho^m(\rr {2d})$
\begin{align*}
K(x,y) & = e^{-i \la x,y \ra}  a(x,y), \quad x,y \in \rr d, 
\end{align*}
using \eqref{eq:Ffree} and \eqref{eq:kernelfree}. 
Likewise the phase function $\fy(x,y) = \la x,y \ra$, $x,y \in \rr d$, parametrizes the Lagrangian $\Lambda_{-\J}'$. 
Thus $K_1(x,y)=e^{i \la x,y \ra} \in K^0(-\J)$. 
Denoting by $\cK$ and $\cK_1$ the operators with kernels $K$ and $K_1$ respectively, 
the kernel of $\cK \cK_1$ is therefore
\begin{equation*}
K_0(x,y) = \int_{\rr d} e^{i \la x-y,z \ra}  a(x,-z) \, dz. 
\end{equation*}
After a change from left to Weyl quantization (cf. \cite[Theorem~23.1]{Shubin1}), this is the kernel of a Weyl operator $b^w(x,D)$ with $b \in \Gamma_\rho^m(\rr {2d})$.
Since $\cK_1^{-1} = (2\pi)^{-d/2}\cF$ by Proposition \ref{prop:metaplectic} we obtain
\begin{equation*}
\cK = (2\pi)^{-d/2}  b^w(x,D)  \cF= (2\pi)^{-d/2}  b^w(x,D)  \mu(\J)
\end{equation*}
which is the first claimed factorization. 
The second claimed factorization follows from the first and \eqref{eq:metaplecticoperator}. 
\end{proof}

\begin{rem}\label{rem:quantization}
Note that \cite[Theorem~23.1]{Shubin1} shows that the symbol space $\Gamma_\rho^m(\rr {2d})$ is independent of quantization (Weyl, Kohn--Nirenberg, or a parametrized set comprising the two) provided $\rho>0$. 
That is, if $a(x,D) = b^w(x,D)$ then $a \in \Gamma_\rho^m(\rr {2d})$ if and only if $b \in \Gamma_\rho^m(\rr {2d})$. 
In the proof of Lemma \ref{lem:Jfactorization} we use the same result also for $\rho=0$. 
It can be motivated as follows. (The argument also gives a short alternative proof of the invariance for $0 < \rho \leqs 1$.)
Suppose $a(x,D) = b^w(x,D)$ and $a \in \Gamma_\rho^m(\rr {2d})$. 
We must show $b \in \Gamma_\rho^m(\rr {2d})$. 
We have (cf. \cite[Theorem~18.5.10]{Hormander0})
$b = \cF^{-1} M \cF a$ where $M$ is the multiplication operator $f (x,\xi) \mapsto e^{- i \la x, \xi \ra/2} f(x,\xi)$. 
Thus $M = \mu(\chi_1)$ where 
\begin{equation*}
\chi_1 = \left(
\begin{array}{ll}
I_{2d} & 0 \\
F & I_{2d}
\end{array}
\right) \in \Sp(2d,\ro)
\end{equation*}
and 
\begin{equation*}
F = - \frac{1}{2} \left(
\begin{array}{ll}
0 & I_d \\
I_d & 0
\end{array}
\right) \in \M_{2d \times 2d}(\ro). 
\end{equation*}
From \eqref{eq:metaplecticFourier} it follows that $b = \pm \mu(\chi_2)a$ with 
\begin{equation*}
\chi_2 = -\J \chi_1 \J = \left(
\begin{array}{ll}
I_{2d} & -F \\
0 & I_{2d}
\end{array}
\right) \in \Sp(2d,\ro). 
\end{equation*}
Lemma \ref{lem:FBIsymplectic} gives with $g \in \cS(\rr {2d}) \setminus 0$ 
\begin{equation*}
\cT_{\mu(\chi_2) g} b (z,\zeta)
= \pm e^{-\frac{i}{2} \la F \zeta, \zeta \ra} \cT_g a(z + F \zeta, \zeta), \quad z,\zeta \in \rr {2d}. 
\end{equation*}
The claim $b \in \Gamma_\rho^m(\rr {2d})$ is now a consequence of \cite[Proposition~2.2]{Cappiello2}. 
\end{rem}

As a consequence of Proposition \ref{prop:composition} and Lemma \ref{lem:Jfactorization} we get a representation theorem for an FIO as the composition of a Weyl pseudodifferential operator and a metaplectic operator. 

\begin{thm}\label{thm:repFIO}
If $\chi \in \Sp(d,\ro)$ and $\cK \in \cI^m_{\rho}(\chi)$ then 
there exist $b \in \Gamma_\rho^m(\rr {2d})$ such that
\begin{equation*}
\cK  = b^w(x,D)  \mu(\chi) = \mu(\chi) (b \circ \chi)^w(x,D). 
\end{equation*}
Conversely, for any $b \in \Gamma_\rho^m(\rr {2d})$ we have $b^w(x,D) \mu(\chi)\in \cI^m_{\rho}(\chi)$.
\end{thm}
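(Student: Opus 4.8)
The plan is to deduce Theorem \ref{thm:repFIO} from the two structural results already at hand: the composition homomorphism Proposition \ref{prop:composition} and the special factorization Lemma \ref{lem:Jfactorization} for $\chi = \J$. The strategy is to reduce an arbitrary $\chi$ to the case $\J$ by multiplying on the right by a suitable metaplectic operator.

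First I would recall that every $\chi \in \Sp(d,\ro)$ can be written as $\chi = \chi \J^{-1} \cdot \J$, and since $\Sp(d,\ro)$ is a group, $\chi_0 := \chi \J^{-1} \in \Sp(d,\ro)$. By Proposition \ref{prop:metaplectic} (applied with trivial amplitude) the metaplectic operator $\mu(\chi_0)$ is, up to a nonzero constant, an FIO with trivial amplitude parametrizing $\Lambda_{\chi_0}'$, so $\mu(\chi_0) \in \cI^0(\chi_0) \subseteq \cI^0_\rho(\chi_0)$. Now given $\cK \in \cI^m_\rho(\chi)$, Proposition \ref{prop:composition} gives $\mu(\chi_0)^{-1} \cK \in \cI^m_\rho(\chi_0^{-1} \chi) = \cI^m_\rho(\J)$, using that $\mu(\chi_0)^{-1} = \mu(\chi_0^{-1})$ up to sign and that $\chi_0^{-1}\chi = \J$. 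Here one should note that $\mu(\chi_0)^{-1}$ being an FIO in $\cI^0_\rho(\chi_0^{-1})$ follows again from Proposition \ref{prop:metaplectic} (or Proposition \ref{prop:formaladjoint} together with unitarity). Then Lemma \ref{lem:Jfactorization} applies to $\mu(\chi_0)^{-1}\cK$ and produces $b_0 \in \Gamma_\rho^m(\rr{2d})$ with $\mu(\chi_0)^{-1} \cK = b_0^w(x,D)\, \mu(\J)$, whence
\begin{equation*}
\cK = \mu(\chi_0)\, b_0^w(x,D)\, \mu(\J) = \mu(\chi_0)\, b_0^w(x,D)\, \mu(\chi_0^{-1})\, \mu(\chi_0)\, \mu(\J).
\end{equation*}
By the symplectic invariance \eqref{eq:metaplecticoperator} the middle combination $\mu(\chi_0)\, b_0^w(x,D)\, \mu(\chi_0^{-1}) = (b_0 \circ \chi_0^{-1})^w(x,D)$, and by Remark \ref{rem:quantization} (invariance of the Shubin symbol class under affine symplectic changes of variables, which is exactly what conjugation by a metaplectic operator does on the symbol side) this is again a Weyl operator with symbol $b := b_0 \circ \chi_0^{-1} \in \Gamma_\rho^m(\rr{2d})$. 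Since $\mu(\chi_0)\mu(\J) = \pm \mu(\chi_0 \J) = \pm\mu(\chi)$, absorbing the sign into $b$ gives $\cK = b^w(x,D)\, \mu(\chi)$, which is the first claimed factorization; the second, $\cK = \mu(\chi)(b \circ \chi)^w(x,D)$, follows immediately from \eqref{eq:metaplecticoperator}.

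For the converse direction I would argue that $b^w(x,D) \in \cI^m_\rho(I)$ by Remark \ref{rem:pseudohelffer} (Weyl--Shubin pseudodifferential operators are FIOs associated with the identity symplectic matrix, i.e. with the conormal bundle of the diagonal), that $\mu(\chi) \in \cI^0(\chi) \subseteq \cI^0_\rho(\chi)$ by Proposition \ref{prop:metaplectic}, and then apply Proposition \ref{prop:composition} once more to conclude $b^w(x,D)\, \mu(\chi) \in \cI^{m}_\rho(I \cdot \chi) = \cI^m_\rho(\chi)$.

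The main obstacle I anticipate is not any single deep step but rather bookkeeping the sign ambiguity in the metaplectic representation \eqref{eq:metaplechomo} consistently throughout, together with making sure Proposition \ref{prop:composition} is genuinely applicable at each use (its hypotheses require both factors to be FIOs in the stated classes, so one must first certify that $\mu(\chi_0)$ and $\mu(\chi_0)^{-1}$ qualify). The one genuinely substantive ingredient is the invariance of $\Gamma_\rho^m(\rr{2d})$ under the pullback $b_0 \mapsto b_0 \circ \chi_0^{-1}$; this is linear-symplectic conjugation invariance of the Shubin class, which is exactly the content exploited in Remark \ref{rem:quantization} via Lemma \ref{lem:FBIsymplectic} and \cite[Proposition~2.2]{Cappiello2}, so it can be cited rather than reproved. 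Everything else is an essentially formal chain of identities.
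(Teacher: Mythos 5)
Your proof is correct and follows essentially the same strategy as the paper: reduce to the case $\chi=\J$ by composing with a metaplectic operator (an FIO with trivial amplitude, via Proposition \ref{prop:metaplectic} and Proposition \ref{prop:composition}), apply Lemma \ref{lem:Jfactorization}, and recombine using the metaplectic homomorphism and symplectic covariance. The only cosmetic difference is that the paper multiplies $\cK$ on the \emph{right} by $\cK_{\fy,1}\in\cI^0((-\J\chi)^{-1})$, which leaves the Weyl factor already in the correct leftmost position, whereas you multiply on the \emph{left} by $\mu(\chi_0)^{-1}$ with $\chi_0=\chi\J^{-1}$ and then need one extra application of \eqref{eq:metaplecticoperator} (plus the trivial invariance $b_0\circ\chi_0^{-1}\in\Gamma_\rho^m$ for any invertible linear map) to move $\mu(\chi_0)$ past $b_0^w(x,D)$ — a step the paper's ordering avoids.
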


\begin{proof}
Let $\cK_{\fy,1} \in \cI^0((-\J \chi)^{-1})$. 
By Proposition \ref{prop:composition} and Lemma \ref{lem:Jfactorization} we have, since $\chi = \J (-\J \chi)$, 
\begin{equation*} 
\cK \cK_{\fy,1} 
= \cK_2
= b^w(x,D)  \mu(\J)
\end{equation*}
where $\cK_2 \in \cI_\rho^m(\J)$ and $b \in \Gamma_\rho^m(\rr {2d})$. 

Proposition \ref{prop:metaplectic} gives $\cK_{\fy,1} ^{-1} = C \mu (-\J \chi)$ where $C \in \co\setminus 0$, and hence
\begin{equation*} 
\cK
= C b^w(x,D)  \mu(\J) \mu(-\J \chi) 
= \pm C b^w(x,D) \mu(\chi).
\end{equation*}
This proves the first claimed factorization. 
The second claimed factorization is again an immediate consequence of \eqref{eq:metaplecticoperator}. 

For the converse implication we observe that Proposition \ref{prop:metaplectic} implies $\mu(\chi) = \cK_{\fy,a} \in \cI^0(\chi)$ for an appropriate phase function $\fy$ and a constant amplitude $a \equiv C \in \co \setminus 0$. 
We also have $b^w(x,D) \in \cI^m_\rho(I)$ (cf. Remark \ref{rem:pseudohelffer}). 
Proposition \ref{prop:composition} then gives $b^w(x,D)  \mu(\chi) \in \cI^m_{\rho}(\chi)$.
\end{proof}

The factorization in Theorem \ref{thm:repFIO} has several consequences.

It means that we could define the FIOs as the operators of the form $b^w(x,D)\mu(\chi)$, cf. \cite{CGNR}. 
Composing two FIOs gives using \eqref{eq:metaplecticoperator}
\begin{align*}
b_1^w(x,D)\mu(\chi_1)b_2^w(x,D)\mu(\chi_2)
& = b_1^w(x,D)(b_2\circ \chi_1^{-1})^w(x,D)\mu(\chi_1)\mu(\chi_2) \\
& = \pm(b_1 \wpr (b_2 \circ \chi_1^{-1}))^w(x,D) \mu(\chi_1\chi_2).
\end{align*}
The FIOs can hence be identified with the semidirect product of Weyl quantized pseudodifferential operators with the metaplectic group. 

Since metaplectic operators and pseudodifferential operators are both continuous on $\cS(\rr d)$, Theorem \ref{thm:repFIO} also gives an alternative proof of continuity of operators in $\cI^m_\rho (\chi)$ on $\cS(\rr d)$ and on $\cS'(\rr d)$. 
We can also deduce the continuity from the Shubin--Sobolev space
$Q^s(\rr d)$ to $Q^{s-m}(\rr d)$ for $s \in \ro$. These spaces were introduced by Shubin \cite{Shubin1} (cf. \cite{Grochenig1,Nicola1}). 
The space $Q^s(\rr d)$ is identical to the modulation space $M^{2}_s(\rr d)$, that is 
\begin{equation*}
Q^s (\rr d)  = \{u \in \cS'(\rr d): \, \eabs{ \cdot }^s \cT_g u \in L^2(\rr {2d}) \} 
\end{equation*}
where $g \in \cS(\rr d) \setminus 0$ is fixed and arbitrary, with norm
\begin{equation*}
\| u \|_{Q^s} = \left\| \eabs{ \cdot }^s \cT_g u \right\|_{L^2(\rr {2d})}. 
\end{equation*}

Since metaplectic operators are homeomorphisms on $Q^s(\rr d)$, cf. \cite[Proposition 400]{deGosson2}, and since 
pseudodifferential operators of order $m$ are continuous from $Q^{s}(\rr d)$ to $Q^{s-m}(\rr d)$ \cite[Theorem~25.2]{Shubin1}, we get the following result.

\begin{prop}\label{prop:contSobolevShubin}
Suppose $\chi \in \Sp(d,\ro)$, and $\cK \in \cI_{\rho}^m(\chi)$. 
Then $\cK: Q^{s}(\rr d) \rightarrow Q^{s-m}(\rr d)$ is continuous for all $s \in \ro$.
\end{prop}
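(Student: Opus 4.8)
The plan is to deduce Proposition~\ref{prop:contSobolevShubin} directly from the factorization in Theorem~\ref{thm:repFIO}, reducing everything to two known continuity statements about the building blocks. By Theorem~\ref{thm:repFIO} we may write $\cK = b^w(x,D) \mu(\chi)$ for some $b \in \Gamma_\rho^m(\rr {2d})$. Hence it suffices to treat the two factors separately and compose.

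First I would recall that the Shubin--Sobolev space $Q^s(\rr d)$ coincides with the modulation space $M^2_s(\rr d)$, characterized by $\eabs{\cdot}^s \cT_g u \in L^2(\rr {2d})$ for a fixed window $g \in \cS(\rr d) \setminus 0$, and that this characterization is independent of the choice of $g$. Then the first step is to invoke that a metaplectic operator $\mu(\chi)$ is a homeomorphism on $Q^s(\rr d)$ for every $s \in \ro$; this is \cite[Proposition 400]{deGosson2}, and can alternatively be seen from Lemma~\ref{lem:FBIsymplectic}, which shows that $\cT_{\mu(\chi)g}(\mu(\chi)u)$ is, up to a unimodular factor, the pullback of $\cT_g u$ by the linear symplectic map $\chi^{-1}$, so that the weight $\eabs{\cdot}^s$ is preserved up to constants because $\chi^{-1}$ has operator norm and inverse norm bounded. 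The second step is to invoke that a Weyl pseudodifferential operator $b^w(x,D)$ with $b \in \Gamma_\rho^m(\rr {2d})$ maps $Q^s(\rr d)$ continuously into $Q^{s-m}(\rr d)$; this is \cite[Theorem~25.2]{Shubin1}. Composing $\mu(\chi): Q^s(\rr d) \to Q^s(\rr d)$ with $b^w(x,D): Q^s(\rr d) \to Q^{s-m}(\rr d)$ then yields $\cK: Q^s(\rr d) \to Q^{s-m}(\rr d)$ continuously, which is the assertion.

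There is essentially no serious obstacle here, since both ingredients are quoted from the literature and the factorization is already established. The only point deserving a word of care is that the order $m$ is attached to the pseudodifferential factor only, while the metaplectic factor does not shift the order; one should also note that the statement is uniform in $s$, which follows because both cited results hold for all $s \in \ro$. If one wished to avoid citing \cite[Proposition 400]{deGosson2}, the metaplectic invariance of $Q^s$ could be proved in a couple of lines from Lemma~\ref{lem:FBIsymplectic} together with Peetre's inequality \eqref{eq:Peetre} applied to the change of variables $z \mapsto \chi^{-1} z$, using that $|\chi^{-1} z| \asymp |z|$.
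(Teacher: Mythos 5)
Your proposal is correct and follows essentially the same route as the paper: both use the factorization $\cK = b^w(x,D)\mu(\chi)$ from Theorem~\ref{thm:repFIO} and then compose the metaplectic invariance of $Q^s(\rr d)$ (de~Gosson) with the $Q^s \to Q^{s-m}$ continuity of Shubin pseudodifferential operators (Shubin, Theorem~25.2). The brief alternative justification of the metaplectic step via Lemma~\ref{lem:FBIsymplectic} and Peetre's inequality is a pleasant self-contained variant but not a different method.
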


Finally Theorem \ref{thm:repFIO} and \eqref{eq:metaplecticoperator} imply the following result of Egorov type.

\begin{cor}
If $\chi \in \Sp(d,\ro)$, $\cK \in \cI_{\rho}^m(\chi)$ and $b \in \Gamma_\rho^n(\rr {2d})$ then 
there exist $c \in \Gamma_\rho^m(\rr {2d})$ such that
\begin{equation*}
\cK^* b^w(x,D) \cK  = (\overline{c} \wpr (b \circ \chi) \wpr c)^w(x,D) \in \OP^w \Gamma_\rho^{2m+n}. 
\end{equation*}
\end{cor}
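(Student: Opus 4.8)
The plan is to apply the factorization result Theorem~\ref{thm:repFIO} to reduce the composition $\cK^* b^w(x,D) \cK$ to a product of Weyl pseudodifferential operators, and then read off the symbol via the metaplectic intertwining relation \eqref{eq:metaplecticoperator} and the Weyl product calculus. First I would write $\cK = c^w(x,D)\,\mu(\chi)$ for some $c \in \Gamma_\rho^m(\rr {2d})$, which is exactly the content of Theorem~\ref{thm:repFIO}. Taking formal adjoints gives $\cK^* = \mu(\chi)^{-1}\,\overline{c}^w(x,D)$, using that $\mu(\chi)$ is unitary on $L^2$ and that the Weyl adjoint of $c^w(x,D)$ has symbol $\overline{c}$. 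Since Weyl quantization is independent of $\rho \in [0,1]$ up to the symbol class (cf.\ Remark~\ref{rem:quantization}), $\overline{c} \in \Gamma_\rho^m(\rr {2d})$ as well.

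Next I would assemble the triple composition:
\begin{equation*}
\cK^* b^w(x,D) \cK = \mu(\chi)^{-1}\,\overline{c}^w(x,D)\, b^w(x,D)\, c^w(x,D)\, \mu(\chi).
\end{equation*}
The product of the three middle pseudodifferential factors is $(\overline{c} \wpr b \wpr c)^w(x,D)$, where the Weyl product $\wpr$ is well defined here because the Shubin calculus is closed under composition with the expected additivity of orders: $\overline{c} \in \Gamma_\rho^m$, $b \in \Gamma_\rho^n$, $c \in \Gamma_\rho^m$ give $\overline{c} \wpr b \wpr c \in \Gamma_\rho^{2m+n}(\rr {2d})$ (see \cite{Shubin1}, and note $\rho$ is preserved). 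Finally, conjugation by the metaplectic operator and the symplectic covariance \eqref{eq:metaplecticoperator} yield
\begin{equation*}
\mu(\chi)^{-1} (\overline{c} \wpr b \wpr c)^w(x,D)\, \mu(\chi) = \big( (\overline{c} \wpr b \wpr c) \circ \chi \big)^w(x,D).
\end{equation*}
Since pullback by the linear symplectic map $\chi$ preserves each Shubin class $\Gamma_\rho^k(\rr {2d})$ (the estimates \eqref{eq:shubinestimate} are invariant under linear changes of variables, as $\eabs{\chi(X)} \asymp \eabs{X}$), the resulting symbol lies in $\Gamma_\rho^{2m+n}(\rr {2d})$.

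The last point requiring care is to identify $(\overline{c} \wpr b \wpr c) \circ \chi$ with the expression $\overline{c} \wpr (b \circ \chi) \wpr c$ claimed in the statement. This is \emph{not} a direct distribution of the pullback over the Weyl product — rather, it follows by inserting $\mu(\chi)$ and $\mu(\chi)^{-1}$ at the appropriate places and relabelling: writing $\cK = c^w(x,D)\mu(\chi)$, one has $\cK^* b^w(x,D) \cK = \mu(\chi)^{-1} \overline{c}^w(x,D) \mu(\chi)\, \mu(\chi)^{-1} b^w(x,D) \mu(\chi)\, \mu(\chi)^{-1} c^w(x,D) \mu(\chi)$, wait — this requires $c^w$ and $\mu(\chi)$ in the correct order. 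The cleaner route, and the one I would actually carry out, is to use the second factorization in Theorem~\ref{thm:repFIO}, namely $\cK = \mu(\chi)(b_\chi \circ \chi)^w(x,D)$ with $b_\chi := $ the symbol there; then $\cK^* = (\overline{b_\chi} \circ \chi)^w(x,D)\,\mu(\chi)^{-1}$, and the three pseudodifferential factors $(\overline{b_\chi}\circ\chi)^w$, $\mu(\chi)^{-1} b^w \mu(\chi) = (b\circ\chi)^w$, and $(b_\chi\circ\chi)^w$ compose directly, after absorbing the outer $\mu(\chi)$ and $\mu(\chi)^{-1}$, into $(\overline{b_\chi}\circ\chi) \wpr (b\circ\chi) \wpr (b_\chi\circ\chi)$; relabelling $c := b_\chi \circ \chi \in \Gamma_\rho^m(\rr {2d})$ gives precisely $\overline{c} \wpr (b\circ\chi) \wpr c$. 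The only genuine obstacle is bookkeeping the order of the metaplectic factors and verifying $\rho$-invariance of the Shubin classes under $\chi$ and under Weyl reordering; neither is deep, but both must be stated carefully.
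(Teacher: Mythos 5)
Your final route is correct and is exactly what the paper intends: the corollary is stated as an immediate consequence of Theorem~\ref{thm:repFIO} and the metaplectic covariance \eqref{eq:metaplecticoperator}, and your calculation with $\cK = \mu(\chi) c^w(x,D)$, $\cK^* = \overline{c}^w(x,D)\mu(\chi)^{-1}$, followed by $\mu(\chi)^{-1} b^w(x,D)\mu(\chi) = (b\circ\chi)^w(x,D)$, is precisely the intended argument. Your worry in the first attempt is actually moot: the Weyl product is symplectically covariant, $(a_1 \wpr a_2)\circ\chi = (a_1\circ\chi)\wpr(a_2\circ\chi)$ (this follows directly from \eqref{eq:metaplecticoperator}), so $(\overline{c}\wpr b\wpr c)\circ\chi = \overline{(c\circ\chi)}\wpr(b\circ\chi)\wpr(c\circ\chi)$ and the first route gives the stated form after relabelling $c\mapsto c\circ\chi$.
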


\section{Phase space characterization of FIOs}\label{sec:phasechar}

In this section we characterize the kernels of FIOs with estimates on their FBI transform, generalizing  our results for Shubin pseudodifferential operators \cite{Cappiello2}. 

First we show that Theorem \ref{thm:repFIO} gives the following result as by-product. 

\begin{prop}\label{prop:FIOSTFT}
If $\chi \in \Sp(d,\ro)$, $K_{\fy,a} \in K_\rho^m(\chi)$ and $g \in \cS(\rr {d}) \setminus 0$ then 
there exist $b \in \Gamma_\rho^m(\rr {2d})$ and $h \in \cS(\rr {2d}) \setminus 0$ such that 
\begin{align*} 
\cT_{g \otimes g} K_{\fy,a} (z,\zeta)  
& =  e^{\frac{i}{2} \left( \la z_2, \zeta_2\ra + \la \chi(z_2,-\zeta_2)_1, \chi(z_2,-\zeta_2)_2 \ra\right) } \\
& \quad \times \cT_h K_{b} (z_1, \chi(z_2,-\zeta_2)_1, \zeta_1, - \chi(z_2,-\zeta_2)_2), \\
& \qquad \qquad \qquad (z,\zeta) \in T^*\rr {2d}, 
\end{align*}
where $K_b$ is the kernel of $b^w(x,D)$ (cf. \eqref{eq:schwartzkernelpseudo}). 
\end{prop}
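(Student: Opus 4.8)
The plan is to combine the factorization Theorem~\ref{thm:repFIO} with the covariance of the FBI transform under metaplectic operators, Lemma~\ref{lem:FBIsymplectic}. By Theorem~\ref{thm:repFIO} there exists $b \in \Gamma_\rho^m(\rr {2d})$ with $\cK_{\fy,a} = b^w(x,D)\,\mu(\chi)$ as operators $\cS(\rr d) \to \cS'(\rr d)$. The first step is to pass this operator identity to the level of kernels. The Schwartz kernel of $b^w(x,D)$ is $K_b$ as in \eqref{eq:schwartzkernelpseudo}, and the Schwartz kernel of $\mu(\chi)$ is the integral kernel of a metaplectic operator. Composition of operators corresponds to the usual integration of kernels in the intermediate variable, so $K_{\fy,a}(x,y)$ is obtained by applying, in the second variable $y$, the adjoint of $\mu(\chi)$ (viewed via its kernel) to $K_b(x,\cdot)$; equivalently $K_{\fy,a} = (\mathrm{Id}\otimes \overline{\mu(\chi)})\,K_b$ in the appropriate sense. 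The key observation is that this is the same as applying a metaplectic operator on $\rr {2d}$ to $K_b$: if $\widetilde\chi \in \Sp(2d,\ro)$ is the symplectic matrix acting on $T^*\rr {2d} = T^*\rr d \times T^*\rr d$ by the identity on the first factor and by $(\chi^{-1})$-twisted action on the second factor (with the sign flip in the covariable that turns $\mu(\chi)$ acting in $y$ into the required conjugate-kernel action), then $K_{\fy,a} = \mu(\widetilde\chi)\,K_b$ up to a unimodular constant. One reads off $\widetilde\chi$ precisely from the identification $\Lambda_\chi' = \{(x,y,\xi,-\eta):(x,\xi)=\chi(y,\eta)\}$ in \eqref{eq:twistedgraphlagrangian}.

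The second step is to apply Lemma~\ref{lem:FBIsymplectic} with $d$ replaced by $2d$, $u = K_b$, and $\chi$ replaced by $\widetilde\chi$, using a window of the form $g\otimes g$ on one side: choosing $h \in \cS(\rr{2d})\setminus 0$ so that $g \otimes g = \mu(\widetilde\chi) h$ (possible since $\mu(\widetilde\chi)$ is a homeomorphism on $\cS$; one may simply take $h = \mu(\widetilde\chi)^{-1}(g\otimes g)$, and if the resulting $h$ fails to be a pure tensor that is harmless — the statement only requires $h \in \cS(\rr{2d})\setminus 0$). Lemma~\ref{lem:FBIsymplectic} then gives
\begin{equation*}
\cT_{g\otimes g} K_{\fy,a}(z,\zeta)
= \cT_{\mu(\widetilde\chi) h}(\mu(\widetilde\chi) K_b)(z,\zeta)
= e^{\frac{i}{2}(\la z,\zeta\ra - \la \widetilde\chi^{-1}(z,\zeta)_1, \widetilde\chi^{-1}(z,\zeta)_2\ra)}\,
\cT_h K_b(\widetilde\chi^{-1}(z,\zeta)),
\end{equation*}
up to the unimodular constant from the factorization, which can be absorbed into $h$ or $b$. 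The third step is pure bookkeeping: substitute the explicit block form of $\widetilde\chi^{-1}$ coming from $\chi$, write $(z,\zeta) = (z_1,z_2,\zeta_1,\zeta_2)$, and verify that $\widetilde\chi^{-1}(z,\zeta) = (z_1, \chi(z_2,-\zeta_2)_1, \zeta_1, -\chi(z_2,-\zeta_2)_2)$ and that the quadratic exponent $\frac12(\la z,\zeta\ra - \la\widetilde\chi^{-1}(z,\zeta)_1, \widetilde\chi^{-1}(z,\zeta)_2\ra)$ collapses to $\frac12(\la z_2,\zeta_2\ra + \la\chi(z_2,-\zeta_2)_1, \chi(z_2,-\zeta_2)_2\ra)$; the terms in $z_1,\zeta_1$ cancel because $\widetilde\chi$ acts as the identity on the first copy.

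The main obstacle I anticipate is step one: correctly identifying the symplectic matrix $\widetilde\chi \in \Sp(2d,\ro)$ such that "apply $\mu(\chi)$ in the $y$-variable to form the composite kernel" equals "apply $\mu(\widetilde\chi)$ on $\rr{2d}$", keeping careful track of the twist $(y,\eta)\mapsto(y,-\eta)$ and of the complex conjugation implicit in how the kernel of $\mu(\chi)$ enters the composition $b^w(x,D)\mu(\chi)$. Once $\widetilde\chi$ is pinned down — and the defining relation \eqref{eq:twistedgraphlagrangian} together with the fact that the kernel of an FIO in $\cI^m_\rho(\chi)$ is a $\Gamma$-Lagrangian distribution with respect to $\Lambda_\chi'$ tells us exactly what it must be — the rest is a mechanical substitution into Lemma~\ref{lem:FBIsymplectic} and simplification of the Gaussian phase.
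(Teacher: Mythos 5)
Your plan is correct in substance, but it follows a genuinely different route from the paper's. Both proofs start from Theorem~\ref{thm:repFIO}, but the paper does \emph{not} rephrase the kernel identity $\cK_{\fy,a}=b^w(x,D)\mu(\chi)$ as a single metaplectic action on $\rr{2d}$ and does \emph{not} invoke Lemma~\ref{lem:FBIsymplectic} in dimension $2d$. Instead it expands
$\cT_{g\otimes g}K_{\fy,a}(z,\zeta)=(2\pi)^{-d}\big(\cK_{\fy,a}\,T_{z_2}M_{-\zeta_2}\bar g,\ T_{z_1}M_{\zeta_1}g\big)$,
substitutes the factorization, and then uses the \emph{operator-conjugation identity} $\mu(\chi)T_{z_2}M_{-\zeta_2}\mu(\chi^{-1})=e^{\frac{i}{2}(\la z_2,\zeta_2\ra+\la\chi(z_2,-\zeta_2)_1,\chi(z_2,-\zeta_2)_2\ra)}T_{\chi(z_2,-\zeta_2)_1}M_{\chi(z_2,-\zeta_2)_2}$ extracted from the \emph{proof} of Lemma~\ref{lem:FBIsymplectic} (not its statement). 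The resulting window is $h=g\otimes\overline{g_\chi}$ with $g_\chi=\mu(\chi)\bar g$, which is what your $h=\mu(\widetilde\chi)^{-1}(g\otimes g)$ reduces to.

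Your route works: the complex-conjugation bookkeeping you flag resolves cleanly. Writing $C$ for complex conjugation and $J_0=\diag(I,-I)$, one finds $B=C\mu(\chi^{-1})C=\mu(J_0\chi^{-1}J_0)$ (up to phase), so $K_{\fy,a}=(\mathrm{Id}\otimes\mu(J_0\chi^{-1}J_0))K_b$, and with $\widetilde\chi(x,\xi)=(x_1,(J_0\chi^{-1}J_0)(x_2,\xi_2)_1,\xi_1,(J_0\chi^{-1}J_0)(x_2,\xi_2)_2)$ one gets $\widetilde\chi^{-1}(z,\zeta)=(z_1,\chi(z_2,-\zeta_2)_1,\zeta_1,-\chi(z_2,-\zeta_2)_2)$ and the Gaussian phase from Lemma~\ref{lem:FBIsymplectic} collapses exactly to $\tfrac12(\la z_2,\zeta_2\ra+\la\chi(z_2,-\zeta_2)_1,\chi(z_2,-\zeta_2)_2\ra)$, as you predicted. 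However, note two costs of your approach. First, the identification $\mathrm{Id}\otimes\mu(\chi')=e^{i\theta}\mu(\widetilde\chi)$ for some $\widetilde\chi\in\Sp(2d,\ro)$ is a nontrivial fact; the paper proves it only later (Lemma~\ref{lem:metaplectictensor}, and only in the mirror-image form $\mu(\chi)\otimes\mathrm{id}$), so you would need to supply it here. Second, the transition from the operator identity to the kernel identity $K_{\fy,a}=\mu(\widetilde\chi)K_b$ passes through a complex conjugation on the $y$-side, and one must be careful that $C\mu(\chi)C$ is again metaplectic (via the anti-symplectic involution $J_0$) — you correctly anticipate this but should carry it out, since a sign error here would flip $\chi$ to $\chi^{-1}$ or introduce a spurious $J_0$ in the final formula. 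The paper's direct computation avoids both issues at the price of being less conceptually transparent, and it incidentally exhibits the window $h$ explicitly (which your $h=\mu(\widetilde\chi)^{-1}(g\otimes g)$ does too, once unwound).
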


\begin{proof}
By Theorem \ref{thm:repFIO} we have for some $b \in \Gamma_\rho^m(\rr {2d})$ 
\begin{equation}\label{eq:FIOkernelSTFT1}
\begin{aligned} 
\cT_{g \otimes g} K_{\fy,a} (z,\zeta) 
& = (2\pi)^{-d} ( K_{\fy,a}, T_z M_\zeta (g \otimes g) ) \\
& = (2\pi)^{-d} ( K_{\fy,a}, T_{z_1} M_{\zeta_1} g \otimes T_{z_2} M_{\zeta_2} g ) \\
& = (2\pi)^{-d} ( \cK_{\fy,a} T_{z_2} M_{-\zeta_2} \overline{g}, T_{z_1} M_{\zeta_1} g ) \\
& = (2\pi)^{-d} ( b^w(x,D)  \mu(\chi) T_{z_2} M_{-\zeta_2} \overline{g}, T_{z_1} M_{\zeta_1} g ) \\
& = (2\pi)^{-d} ( K_{b}, T_{z_1} M_{\zeta_1} g \otimes \overline{\mu(\chi) T_{z_2} M_{-\zeta_2} \overline{g}}). 
\end{aligned}
\end{equation}

Denoting $g_\chi = \mu(\chi) \overline{g} \in \cS(\rr d) \setminus 0$ we study
\begin{equation*} 
\mu(\chi) T_{z_2} M_{-\zeta_2} \overline{g}
= \mu(\chi) T_{z_2} M_{-\zeta_2} \mu(\chi^{-1}) g_\chi. 
\end{equation*}

We have by the proof of Lemma \ref{lem:FBIsymplectic}
\begin{align*} 
\mu(\chi) T_{z_2} M_{-\zeta_2} \mu(\chi^{-1})
=  e^{\frac{i}{2} \left( \la z_2, \zeta_2\ra + \la \chi(z_2,-\zeta_2)_1, \chi(z_2,-\zeta_2)_2 \ra\right) }
T_{\chi(z_2,-\zeta_2)_1} M_{\chi(z_2,-\zeta_2)_2}.
\end{align*}
Inserted into \eqref{eq:FIOkernelSTFT1} this gives finally
\begin{align*} 
& \cT_{g \otimes g} K_{\fy,a} (z,\zeta) \\
& = (2\pi)^{-d} e^{\frac{i}{2} \left( \la z_2, \zeta_2\ra + \la \chi(z_2,-\zeta_2)_1, \chi(z_2,-\zeta_2)_2 \ra\right) } 
( K_{b}, T_{z_1} M_{\zeta_1} g \otimes \overline{ T_{\chi(z_2,-\zeta_2)_1} M_{\chi(z_2,-\zeta_2)_2}  g_\chi}) \\
& = (2\pi)^{-d} e^{\frac{i}{2} \left( \la z_2, \zeta_2\ra + \la \chi(z_2,-\zeta_2)_1, \chi(z_2,-\zeta_2)_2 \ra\right) } 
( K_{b}, T_{(z_1,\chi(z_2,-\zeta_2)_1)} M_{(\zeta_1,-\chi(z_2,-\zeta_2)_2)} (g \otimes \overline{g_\chi} ) ) \\
& = e^{\frac{i}{2} \left( \la z_2, \zeta_2\ra + \la \chi(z_2,-\zeta_2)_1, \chi(z_2,-\zeta_2)_2 \ra\right) } 
\cT_h K_{b} (z_1,\chi(z_2,-\zeta_2)_1, \zeta_1,-\chi(z_2,-\zeta_2)_2)
\end{align*}
with $h = g \otimes \overline{g_\chi}$. 
\end{proof}

As a consequence we obtain
\begin{equation}\label{eq:FBIkernelFIO}
\begin{aligned} 
\cT_{g \otimes g} K_{\fy,a} (z,\zeta)  
& =  e^{\frac{i}{2} \left( \la z, \zeta \ra + \sigma(\chi(z_2,-\zeta_2), (z_1,\zeta_1 ) ) \right) } \\
&\times \cT_h^\Delta K_{b} (z_1, \chi(z_2,-\zeta_2)_1, \zeta_1, - \chi(z_2,-\zeta_2)_2) 
\end{aligned}
\end{equation}
where we use the notation of \cite[Definition~3.2]{Cappiello2}
for $K \in\cS'(\rr {2d})$ and $h \in \cS(\rr {2d}) \setminus 0$
\begin{equation*}
\cT_h^\Delta K (z, \zeta) = e^{-\frac{i}{2} \la \zeta_1-\zeta_2, z_1-z_2 \ra }\cT_h K (z,\zeta), \quad ( z,\zeta) \in T^* \rr {2d},
\end{equation*}
and the symplectic form \eqref{eq:cansympform}. 

Defining 
\begin{equation*} 
\cT_{g \otimes g}^\chi K_{\fy,a} (z,\zeta)  = e^{-\frac{i}{2} \left( \la z, \zeta \ra + \sigma(\chi(z_2,-\zeta_2), (z_1,\zeta_1 ) ) \right) } \cT_{g \otimes g}K_{\fy,a} (z,\zeta)
\end{equation*}
we have thus 
\begin{equation*} 
\cT_{g \otimes g}^\chi K_{\fy,a} (z,\zeta)  = \cT_{h}^\Delta K_{b} (z_1, \chi(z_2,-\zeta_2)_1, \zeta_1, - \chi(z_2,-\zeta_2)_2) 
\end{equation*}
where $h = g \otimes \overline{g_\chi}$. 
When $\chi = I$ we recover $\cT_{g \otimes g}^I K_{\fy,a} (z,\zeta) = \cT_{g \otimes g}^\Delta K_{b} (z,\zeta)$. 

Using the block matrix notation \eqref{eq:symplecticABCD}
we obtain for $(y,\eta) \in T^* \rr d$ 
\begin{align*} 
& \la (\chi(y,\eta), y,-\eta), \nabla_{(z_1,\zeta_1,z_2,\zeta_2)} \ra \cT_h ^\Delta K_{b}  \left(  (z_1, \chi(z_2,-\zeta_2)_1, \zeta_1, - \chi(z_2,-\zeta_2)_2) \right) \\
& = \la \chi(y,\eta), \left( (\nabla_1+\nabla_2, \nabla_3-\nabla_4) \cT_h ^\Delta K_{b} \right) (z_1, \chi(z_2,-\zeta_2)_1, \zeta_1, - \chi(z_2,-\zeta_2)_2) \ra
\end{align*}
where $\nabla_j$ denotes the gradient with respect to the $\rr d$ variable indexed by $j=1,2,3,4$. 

Combined with \cite[Proposition~3.3]{Cappiello2} this gives the following characterization of the kernels of FIOs (cf. \cite{Tataru}). 
Note that we recover \cite[Proposition~3.3]{Cappiello2} when $\chi=I$. 

\begin{thm}\label{thm:FIOkernelchar}
Let $K \in \cS'(\rr {2d})$ and $g \in \cS(\rr {d}) \setminus 0$. Then $K \in K^m_\rho(\chi)$ with $\chi \in \Sp(d,\ro)$ if and only if 
the estimates 
\begin{equation*}
\begin{aligned}
| L_1 \cdots L_k 
\cT_{g \otimes g}^\chi K (z, \zeta)| 
& \lesssim \eabs{(z_1,\zeta_1) + \chi(z_2,-\zeta_2)}^{m-\rho k} \eabs{(z_1,\zeta_1) - \chi(z_2,-\zeta_2))}^{-N}, \\
& \qquad  \qquad ( z,\zeta) \in T^* \rr {2d}, 
\end{aligned}
\end{equation*}
hold for all $k,N \in \no$, where 
\begin{equation*}
L_j = \langle A_j, \nabla_{z,\zeta} \rangle 
\end{equation*}
and $A_j \in \Lambda_\chi'$ for $j=1,2,\dots,k$. 
\end{thm}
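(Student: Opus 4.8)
The strategy is to reduce the statement to the already-established pseudodifferential case \cite[Proposition~3.3]{Cappiello2} by means of the identity \eqref{eq:FBIkernelFIO} derived from the factorization Theorem \ref{thm:repFIO}. Concretely, I would first observe that by Proposition \ref{prop:FIOSTFT} and the definition of $\cT_{g \otimes g}^\chi$ preceding this theorem, we have the exact identity
\begin{equation*}
\cT_{g \otimes g}^\chi K (z,\zeta) = \cT_h^\Delta K_b (z_1, \chi(z_2,-\zeta_2)_1, \zeta_1, -\chi(z_2,-\zeta_2)_2)
\end{equation*}
where, if $K = K_{\fy,a} \in K_\rho^m(\chi)$, then $b \in \Gamma_\rho^m(\rr {2d})$ is the Weyl symbol from Theorem \ref{thm:repFIO} and $K_b$ its Schwartz kernel, while $h = g \otimes \overline{g_\chi} \in \cS(\rr {2d}) \setminus 0$. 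The right-hand side is evaluated at the point obtained from $(z,\zeta)$ by the linear bijection $\Psi_\chi : (z_1,z_2,\zeta_1,\zeta_2) \mapsto (z_1, \chi(z_2,-\zeta_2)_1, \zeta_1, -\chi(z_2,-\zeta_2)_2)$ of $T^*\rr{2d}$.

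\textbf{Key steps.} For the ``only if'' direction: given $K \in K_\rho^m(\chi)$, apply \cite[Proposition~3.3]{Cappiello2} to $K_b$, which characterizes $\cT_h^\Delta K_b$ by estimates involving the directional derivatives $\langle B_j, \nabla \rangle$ with $B_j \in \Lambda_I' = N(\Delta)$, and weights $\eabs{w_1+w_2}^{m-\rho k}\eabs{w_1-w_2}^{-N}$ where $(w_1,w_2)$ are the two $T^*\rr d$ components of the argument. I would then chase these estimates through the substitution $\Psi_\chi$: the argument of $\cT_h^\Delta K_b$ has first $T^*\rr d$ component $(z_1,\zeta_1)$ and second $(\chi(z_2,-\zeta_2)_1, -\chi(z_2,-\zeta_2)_2)$, which is exactly $-\chi(z_2,-\zeta_2)$ up to the sign convention; hence $w_1 \pm w_2$ becomes $(z_1,\zeta_1) \pm \chi(z_2,-\zeta_2)$ (matching the claimed weights, after checking the sign in the $\eta$ slot carefully). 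For the differential operators, the computation displayed just before the theorem shows that a derivative $\langle A_j, \nabla_{z,\zeta}\rangle$ with $A_j = (\chi(y,\eta),y,-\eta) \in \Lambda_\chi'$ transforms under $\Psi_\chi$ into $\langle \chi(y,\eta), (\nabla_1+\nabla_2, \nabla_3-\nabla_4) \cT_h^\Delta K_b \rangle$; since $(\nabla_1+\nabla_2, \nabla_3 - \nabla_4)$ annihilates functions constant along $\Delta \times \Delta^\perp$-directions appropriately, these are precisely the operators $\langle B, \nabla\rangle$ with $B \in N(\Delta) = \Lambda_I'$ appearing in \cite[Proposition~3.3]{Cappiello2}. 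Composing $k$ such operators is handled the same way (Leibniz and boundedness of the extra polynomial factors from the $\Delta$-twist $e^{-\frac{i}{2}\langle \cdot \rangle}$ in $\cT^\Delta_h$, which only contributes lower-order terms). The ``if'' direction runs in reverse: given $K$ satisfying the stated estimates, push them through $\Psi_\chi^{-1}$ to obtain the estimates of \cite[Proposition~3.3]{Cappiello2} for some $K' := $ the operator obtained by undoing the substitution, conclude $K' = K_b$ for $b \in \Gamma_\rho^m(\rr {2d})$, and then by Theorem \ref{thm:repFIO} (converse part) $b^w(x,D)\mu(\chi) \in \cI_\rho^m(\chi)$ has kernel in $K_\rho^m(\chi)$ equal to $K$.

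\textbf{Main obstacle.} The principal technical point is bookkeeping of the affine change of variables $\Psi_\chi$ and the associated phase factor $e^{-\frac{i}{2}(\langle z,\zeta\rangle + \sigma(\chi(z_2,-\zeta_2),(z_1,\zeta_1)))}$: one must verify that this unimodular factor, being a quadratic exponential, interacts with the differential operators $L_j$ only by producing additional first-order terms of strictly lower order in $k$ (so they are absorbed into the inductive estimate), and that the two Peetre-type weights $\eabs{(z_1,\zeta_1) \pm \chi(z_2,-\zeta_2)}$ are genuinely equivalent under $\Psi_\chi$ to the weights $\eabs{w_1 \pm w_2}$ of the pseudodifferential characterization — this uses that $\chi$ is a linear isomorphism, hence $\eabs{\chi(z_2,-\zeta_2)} \asymp \eabs{(z_2,\zeta_2)}$, together with Peetre's inequality \eqref{eq:Peetre}. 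A secondary subtlety is confirming that the span of $\{(\chi(y,\eta),y,-\eta): (y,\eta)\in T^*\rr d\} = \Lambda_\chi'$ maps under $\Psi_\chi$ precisely onto $\Lambda_I' = N(\Delta)$ in the derivative variables, so that the admissible families of operators on the two sides correspond exactly rather than merely being comparable; this is a direct linear-algebra check from \eqref{eq:twistedgraphlagrangian} and \eqref{eq:diagonal}. Once these are in place the equivalence is immediate from \cite[Proposition~3.3]{Cappiello2}.
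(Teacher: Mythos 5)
Your proposal is correct and follows the same route the paper takes: reduce to the pseudodifferential characterization \cite[Proposition~3.3]{Cappiello2} via Proposition~\ref{prop:FIOSTFT} and the exact identity \eqref{eq:FBIkernelFIO}, then transport the estimates and the admissible differential operators through the linear substitution. One remark on your ``main obstacle'': it does not actually arise, because the phase factor is entirely absorbed into the definition of $\cT_{g\otimes g}^\chi$ so that $\cT_{g\otimes g}^\chi K = \cT_h^\Delta K_b\circ \Psi_\chi$ holds with no residual chirp, $\Psi_\chi$ is linear, and the chain rule converts each $L_j = \langle A_j,\nabla_{z,\zeta}\rangle$ with $A_j = (\chi(y,\eta),y,-\eta)\in\Lambda_\chi'$ exactly (no Leibniz corrections, no lower-order terms) into $\langle \chi(y,\eta),(\nabla_1+\nabla_2,\nabla_3-\nabla_4)\rangle$, i.e.\ into $\langle B_j,\nabla\rangle$ with $B_j = D\Psi_\chi A_j\in N(\Delta)$; likewise the weights match the pseudodifferential ones on the nose rather than merely up to Peetre equivalence, since $(w_1\pm w_2,\omega_1\mp\omega_2) = (z_1,\zeta_1)\pm\chi(z_2,-\zeta_2)$ after the substitution.
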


Since $\dist^2((z,\zeta), \Lambda_\chi') \asymp |(z_1,\zeta_1) - \chi(z_2,-\zeta_2))|^2$ where $\dist$ denotes Euclidean distance 
between a point and a subspace, 
and $\Lambda_{-\chi}' \subseteq T^* \rr {2d}$ is transversal to $\Lambda_\chi' \subseteq T^* \rr {2d}$ (cf. \cite[p.~11]{Cappiello2})
we can formulate the estimates as 
\begin{equation*}
\begin{aligned}
| L_1 \cdots L_k \cT_{g \otimes g}^\chi K (z, \zeta)| 
& \lesssim ( 1 + \dist((z,\zeta), \Lambda_{-\chi}') )^{m-\rho k} \, ( 1+\dist((z,\zeta), \Lambda_\chi') )^{-N}, \\
& \qquad ( z, \zeta) \in T^* \rr {2d}, 
\end{aligned}
\end{equation*}
where $k,N \in \no$. 

Theorem \ref{thm:FIOkernelchar} implies the following result which generalizes \cite[Corollary~4.18]{Cappiello2}.

\begin{prop}\label{prop:WFkernel}
If $\chi \in \Sp(d,\ro)$ and $K_{\fy,a} \in K^m_\rho(\chi)$ then 
\begin{equation*}
\WF( K_{\fy,a} ) \subseteq \Lambda_\chi' \setminus 0 \subseteq T^* \rr {2d} \setminus 0. 
\end{equation*}
\end{prop}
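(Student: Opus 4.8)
The plan is to deduce the inclusion directly from the phase space estimates of Theorem~\ref{thm:FIOkernelchar}, since by Definition~\ref{def:WFG} the Gabor wave front set of a tempered distribution is exactly the set of directions along which its FBI transform fails to decay rapidly. First I would fix $g \in \cS(\rr d) \setminus 0$, set $h = g \otimes g \in \cS(\rr {2d}) \setminus 0$, and apply Theorem~\ref{thm:FIOkernelchar} to $K_{\fy,a} \in K^m_\rho(\chi)$ with $k=0$, in the $\dist$-reformulation recorded immediately after that theorem, to obtain
\begin{equation*}
|\cT_{g \otimes g}^\chi K_{\fy,a} (z, \zeta)| \lesssim ( 1 + \dist((z,\zeta), \Lambda_{-\chi}') )^{m} \, ( 1+\dist((z,\zeta), \Lambda_\chi') )^{-N}, \qquad N \in \no.
\end{equation*}
Since $\cT_{g \otimes g}^\chi K_{\fy,a}$ differs from $\cT_{g \otimes g} K_{\fy,a}$ only by the unimodular factor $e^{-\frac{i}{2}(\la z,\zeta\ra + \sigma(\chi(z_2,-\zeta_2),(z_1,\zeta_1)))}$, the same bound holds for $|\cT_{g \otimes g} K_{\fy,a}|$; bounding the first factor crudely by $\eabs{(z,\zeta)}^{|m|}$ (using $\dist((z,\zeta),\Lambda_{-\chi}') \leqs |(z,\zeta)|$) I would arrive at
\begin{equation*}
|\cT_{g \otimes g} K_{\fy,a} (z, \zeta)| \lesssim \eabs{(z,\zeta)}^{|m|} \, ( 1+\dist((z,\zeta), \Lambda_\chi') )^{-N}, \qquad (z,\zeta) \in T^* \rr {2d}, \quad N \in \no.
\end{equation*}

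Next I would take a point $(z_0,\zeta_0) \in T^* \rr {2d} \setminus 0$ lying outside $\Lambda_\chi'$ and produce an open conic neighborhood of it on which the above forces super-polynomial decay. Because $\Lambda_\chi'$ is a linear subspace, the function $(z,\zeta) \mapsto \dist((z,\zeta), \Lambda_\chi')/|(z,\zeta)|$ is continuous and positively homogeneous of degree zero on $T^* \rr {2d} \setminus 0$, and strictly positive at $(z_0,\zeta_0)$; hence there are an open cone $V \subseteq T^* \rr {2d} \setminus 0$ with $(z_0,\zeta_0) \in V$ and a constant $c>0$ such that $\dist((z,\zeta), \Lambda_\chi') \geqs c |(z,\zeta)|$ for $(z,\zeta) \in V$. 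On $V$ the estimate reads $|\cT_{g \otimes g} K_{\fy,a}(z,\zeta)| \lesssim \eabs{(z,\zeta)}^{|m|-N}$ for every $N \in \no$, which is rapid decay. By Definition~\ref{def:WFG}, applied with $\rr d$ replaced by $\rr {2d}$ and with the window $h = g \otimes g$ (legitimate since the Gabor wave front set does not depend on the window), this gives $(z_0,\zeta_0) \notin \WF(K_{\fy,a})$. Letting $(z_0,\zeta_0)$ vary over all nonzero points off $\Lambda_\chi'$, and recalling $\WF(K_{\fy,a}) \subseteq T^* \rr {2d} \setminus 0$, we obtain $\WF(K_{\fy,a}) \subseteq \Lambda_\chi' \setminus 0$.

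I do not anticipate a genuine obstacle here: the content is all in Theorem~\ref{thm:FIOkernelchar}, and what remains is bookkeeping. The two steps that deserve an explicit sentence are the replacement of $\cT_{g\otimes g}^\chi$ by $\cT_{g\otimes g}$, which is harmless because the two agree in absolute value, and the elementary geometric fact that a nonzero point off the closed cone $\Lambda_\chi'$ is separated from it by a positive angle --- this is precisely what supplies the cone $V$ in which the decay estimate becomes purely a negative power of $\eabs{(z,\zeta)}$.
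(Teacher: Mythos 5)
Your proposal is correct and follows essentially the same route as the paper: apply Theorem~\ref{thm:FIOkernelchar} with $k=0$, observe that $\cT_{g\otimes g}^\chi K$ and $\cT_{g\otimes g}K$ agree in modulus, and then use the comparability $|(z,\zeta)|^2 \asymp |(z_1,\zeta_1)+\chi(z_2,-\zeta_2)|^2 + |(z_1,\zeta_1)-\chi(z_2,-\zeta_2)|^2$ to manufacture an open cone around any point off $\Lambda_\chi'$ on which the estimate becomes rapid decay. The paper phrases the cone in terms of $|(z_1,\zeta_1)\pm\chi(z_2,-\zeta_2)|$ while you use the $\dist$-reformulation recorded right after the theorem, but these are the same cone up to constants, and the remaining bookkeeping is identical.
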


\begin{proof}
Let $0 \neq (z_0,\zeta_0) \notin \Lambda_\chi'$. 
For some $C>0$ we then have $(z_0,\zeta_0) \in V$ where the open conic set $V \subseteq T^* \rr {2d} \setminus 0$ is defined by
\begin{equation*}
V = \{ (z,\zeta) \in T^* \rr {2d} \setminus 0: \, |(z_1,\zeta_1) + \chi(z_2,-\zeta_2) | < C |(z_1,\zeta_1) - \chi(z_2,-\zeta_2)| \}. 
\end{equation*}
The conclusion is now a consequence of Theorem \ref{thm:FIOkernelchar} with $k=0$, and 
\begin{equation*}
|(z,\zeta)|^2 \asymp | (z_1,\zeta_1) + \chi(z_2,-\zeta_2) |^2 +  | (z_1,\zeta_1) - \chi(z_2,-\zeta_2) |^2. 
\end{equation*}
\end{proof}

Combining Proposition \ref{prop:WFkernel} with \cite[Proposition~2.11]{Hormander1} we obtain the following result on propagation of Gabor singularities. 
An alternative proof can be given by combining Theorem \ref{thm:repFIO} with \cite[Proposition~2.9 and Eq.~(2.18)]{Rodino1}. 

\begin{cor}\label{cor:FIOmicrolocal}
If $\chi \in \Sp(d,\ro)$ and $\cK \in \cI^m_\rho(\chi)$ then 
\begin{equation*}
\WF( \cK u ) \subseteq \chi \WF(u), \quad u \in \cS'(\rr d). 
\end{equation*}
\end{cor}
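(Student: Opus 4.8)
The plan is to derive Corollary \ref{cor:FIOmicrolocal} as a consequence of two facts already available in the excerpt: the containment $\WF(K_{\fy,a}) \subseteq \Lambda_\chi' \setminus 0$ from Proposition \ref{prop:WFkernel}, and the general result \cite[Proposition~2.11]{Hormander1} which bounds the Gabor wave front set of $\cK u$ in terms of the wave front set of the kernel and that of $u$. Concretely, \cite[Proposition~2.11]{Hormander1} should state that if $\cK$ has kernel $K \in \cS'(\rr {2d})$ then
\begin{equation*}
\WF(\cK u) \subseteq \{ (x,\xi) \in T^*\rr d \setminus 0 : \ (x,y,\xi,-\eta) \in \WF(K) \ \text{for some} \ (y,\eta) \in \WF(u) \cup (\rr d \times \{0\}) \}
\end{equation*}
together with the usual provisos ruling out contributions where one factor vanishes; the precise bookkeeping of the $\{0\}$-components is what I would want to cite verbatim rather than reprove.

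First I would recall that $\cK = \cK_{\fy,a}$ has kernel $K_{\fy,a} \in K^m_\rho(\chi)$, so Proposition \ref{prop:WFkernel} gives $\WF(K_{\fy,a}) \subseteq \Lambda_\chi' \setminus 0$. By the definition \eqref{eq:twistedgraphlagrangian} of the twisted graph Lagrangian, a point of $\Lambda_\chi'$ has the shape $(x,y,\xi,-\eta)$ with $(x,\xi) = \chi(y,\eta)$. Feeding this description into the kernel-to-operator wave front estimate from \cite[Proposition~2.11]{Hormander1}, a point $(x,\xi) \in \WF(\cK u)$ must arise from $(x,y,\xi,-\eta) \in \WF(K_{\fy,a})$ with $(y,\eta) \in \WF(u)$ (the components along $\rr d \times \{0\}$ are excluded precisely because $\Lambda_\chi'$ is a graph over the base, so $\eta=0$ forces $\xi = 0$, contradicting $(x,\xi)\neq 0$ unless $(y,\eta)=0$ as well, which is likewise excluded). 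This yields $(x,\xi) = \chi(y,\eta)$ with $(y,\eta) \in \WF(u)$, i.e. $(x,\xi) \in \chi\WF(u)$, which is the claim.

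The main obstacle, such as it is, is matching conventions: one must check that the twist $(x,y,\xi,\eta) \mapsto (x,y,\xi,-\eta)$ used in \eqref{eq:twistedgraphlagrangian} and the sign/flip conventions in the composition formula of \cite[Proposition~2.11]{Hormander1} are compatible, so that the canonical relation acting on wave front sets is genuinely $\chi$ and not $\chi^{-1}$ or a twisted variant. I would verify this on the model case of a metaplectic operator $\cK = \mu(\chi)$, whose kernel lies in $K^0(\chi)$ by Proposition \ref{prop:metaplectic}, using that $\mu(\chi)$ transports $\WF$ by $\chi$ (this follows from the symplectic covariance in Lemma \ref{lem:FBIsymplectic}), which pins down the orientation. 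The alternative route indicated in the statement, via Theorem \ref{thm:repFIO} and \cite[Proposition~2.9 and Eq.~(2.18)]{Rodino1}, is arguably cleaner on this point: writing $\cK = b^w(x,D)\mu(\chi)$, the pseudodifferential factor $b^w(x,D)$ does not enlarge $\WF$ (it is Gabor-microlocal), while $\mu(\chi)$ maps $\WF(u)$ exactly onto $\chi\WF(u)$; composing gives $\WF(\cK u) \subseteq \chi\WF(u)$ directly. Either way the proof is short once the conventions are fixed.

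\begin{proof}
By Proposition \ref{prop:WFkernel} the kernel $K_{\fy,a} \in K^m_\rho(\chi)$ of $\cK$ satisfies $\WF(K_{\fy,a}) \subseteq \Lambda_\chi' \setminus 0$. Applying \cite[Proposition~2.11]{Hormander1}, for $u \in \cS'(\rr d)$ every $(x,\xi) \in \WF(\cK u)$ arises from a point $(x,y,\xi,-\eta) \in \WF(K_{\fy,a})$ with $(y,\eta) \in \WF(u) \cup (\rr d \times \{0\})$; since $\WF(K_{\fy,a}) \subseteq \Lambda_\chi'$, this point has the form dictated by \eqref{eq:twistedgraphlagrangian}, namely $(x,\xi) = \chi(y,\eta)$. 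As $\Lambda_\chi'$ is the graph of $\chi$ over the base, $\eta = 0$ would force $(x,\xi) = \chi(y,0)$ with the $\xi$-component of a nonzero covector vanishing only if $(y,0)=0$, which is impossible for $(x,\xi) \neq 0$; hence $(y,\eta) \in \WF(u)$. Therefore $(x,\xi) = \chi(y,\eta) \in \chi\WF(u)$, which proves $\WF(\cK u) \subseteq \chi\WF(u)$.

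Alternatively, by Theorem \ref{thm:repFIO} write $\cK = b^w(x,D)\mu(\chi)$ with $b \in \Gamma_\rho^m(\rr {2d})$. The metaplectic operator $\mu(\chi)$ satisfies $\WF(\mu(\chi) u) = \chi\WF(u)$, which follows from the symplectic covariance in Lemma \ref{lem:FBIsymplectic} applied in Definition \ref{def:WFG}. The pseudodifferential factor satisfies $\WF(b^w(x,D) v) \subseteq \WF(v)$ by \cite[Proposition~2.9 and Eq.~(2.18)]{Rodino1}. Composing the two inclusions gives $\WF(\cK u) = \WF(b^w(x,D) \mu(\chi) u) \subseteq \WF(\mu(\chi) u) = \chi\WF(u)$.
\end{proof}
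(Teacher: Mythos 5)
Your proof is correct and follows exactly the two routes the paper itself indicates: combining Proposition~\ref{prop:WFkernel} with the kernel-to-operator wave front estimate of \cite[Proposition~2.11]{Hormander1}, and alternatively combining the factorization of Theorem~\ref{thm:repFIO} with microlocality of Shubin pseudodifferential operators and symplectic covariance of metaplectic operators. The paper gives no further detail beyond naming these ingredients, so your expansion (in particular the careful exclusion of the degenerate $\eta=0$ and $(y,\eta)=0$ contributions using the graph structure of $\Lambda_\chi'$) is a faithful and complete rendering of the intended argument.
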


\begin{rem}
More precisely the statement holds for the Sobolev--Gabor wave front set for any $s \in \ro$ (cf. \cite{SW2}), as
\begin{equation*}
\WF_{Q^{s-m}}( \cK u ) \subseteq \chi \WF_{Q^s}(u), \quad u \in \cS'(\rr d). 
\end{equation*}
\end{rem}

\section{$\Gamma$-Lagrangian distributions}\label{sec:Lagrangian}

Here we introduce Lagrangian distributions adapted to the Shubin calculus. 
For simplicity we work in Sections \ref{sec:Lagrangian} and \ref{sec:kernelLagrangian} with $\rho=1$ but all results are true with natural modifications if $0 \leqs \rho \leqs 1$. 
Before giving a precise definition we need some preliminary steps. 

Let $\Lambda \subseteq T^* \rr d$ be a Lagrangian.
Referring to Section \ref{sec:oscint} we can write
\begin{equation}\label{eq:Lagrangian}
\Lambda = \{ (X, FX + Z) \in T^* \rr d, \ X \in Y, \ Z \in Y^\perp \} 
\end{equation}
where $Y \subseteq \rr d$ is a linear subspace and $F \in \M_{d \times d}( \ro )$ is a symmetric matrix that leaves $Y$ invariant \cite{PRW1}. 
It then automatically leaves $Y^\perp$ invariant so can be written
\begin{equation*}
F = F_Y + F_{Y^\perp}
\end{equation*}
where $F_Y = \pi_Y F \pi_Y$ and $F_{Y^\perp} = \pi_{Y^\perp} F \pi_{Y^\perp}$. 

The subspace $Y \subseteq \rr d$ is uniquely determined by $\Lambda$, but the matrix $F$ is not. 
In fact $F_Y$ is uniquely determined, but $F_{Y^\perp}$ can be any matrix such that $Y \subseteq \Ker F_{Y^\perp}$ and $F_{Y^\perp}$ leaves $Y^\perp$ invariant. 

For a symmetric $F \in \M_{d \times d}( \ro )$ we define 
\begin{equation}\label{eq:chichirp}
\chi_F= \begin{pmatrix} I & 0 \\ F & I \end{pmatrix} \in \Sp(d, \ro).
\end{equation}		
The corresponding metaplectic operator is  $\mu(\chi_F)f (x) = e^{\frac{i}{2} \la F x,x \ra} f(x)$.
Note that 
\begin{equation}\label{eq:chiFiso}
\chi_F: Y \times Y^\perp \to \Lambda
\end{equation}		
is an isomorphism. 

We recall the notion of  $\Gamma$-conormal distribution \cite[Definition~5.1]{Cappiello2}. 

\begin{defn}\label{def:Gconormal}
Suppose $Y \subseteq \rr d$ is an $n$-dimensional linear subspace, $0 \leqs n \leqs d$, let $N(Y) = Y \times Y^\perp$, 
and let $V \subseteq T^* \rr d$ be a $d$-dimensional linear subspace such that $N(Y) \oplus V = T^* \rr d$. 
Then $u \in \cS'(\rr d)$ is $\Gamma$-conormal to $Y$ of degree $m\in \ro$, denoted $u \in I^m_\Gamma(\rr d,Y)$, if for any $g \in \cS(\rr d) \setminus 0$ and for any $ k,N \in \mathbb{N}$ we have
\begin{equation}
\label{eq:conormchar}
\begin{aligned}
\left| L_1 \cdots L_k \cT^Y_g u (x,\xi) \right |
& \lesssim \left( 1 + \dist((x,\xi),V) \right)^{m-k} \left( 1 + \dist((x,\xi),N(Y)) \right)^{-N}, \\
& \qquad (x,\xi) \in T^* \rr d, 
\end{aligned}
\end{equation}
where
\begin{equation}\label{eq:TgYdef}
\cT_g^Y u(x,\xi) = e^{-i \la \pi_{Y^\perp} x, \xi\ra} \cT_g u (x, \xi), \quad (x,\xi) \in T^* \rr d, 
\end{equation}
and
$L_j =\la b_j, \nabla_{x,\xi} \ra$ are first order differential operators with $b_j \in N(Y)$, $j=1,\dots,k$. 
\end{defn}

The space $I^m_\Gamma(\rr d,Y)$ is equipped with a topology defined by seminorms of the best constants in \eqref{eq:conormchar}, cf. \cite{Cappiello2}. 

\begin{prop}\label{prop:chirpinvariance}
If $Y \subseteq \rr d$ is a linear subspace, $F \in \M_{d \times d}( \ro )$ is symmetric, $Y \subseteq \Ker F$
and $\chi_F \in \Sp(d,\ro)$ is defined by \eqref{eq:chichirp},
then 
\begin{equation*}
\mu(\chi_F): I_\Gamma^m(\rr d,Y) \to I_\Gamma^m(\rr d,Y) 
\end{equation*}
is a homeomorphism. 
\end{prop}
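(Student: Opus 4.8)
The plan is to derive the statement from the transformation rule of \lemref{lem:FBIsymplectic} together with two elementary facts about $\chi_F$. First I would observe that $\chi_F^{-1} = \chi_{-F}$, and that $-F$ again satisfies the hypotheses of the proposition (it is symmetric and $Y \subseteq \Ker(-F) = \Ker F$); hence it suffices to prove that $\mu(\chi_F)$ maps $I_\Gamma^m(\rr d, Y)$ continuously into itself, since continuity of $\mu(\chi_F)^{-1} = \mu(\chi_{-F})$ (the sign ambiguity being immaterial) then follows by applying the same statement to $-F$, giving the homeomorphism property. The two facts are: (i) since $F$ is symmetric and $Y \subseteq \Ker F$, one has $\ran F \subseteq Y^\perp$, so $\pi_{Y^\perp} F = F = F \pi_{Y^\perp}$, and consequently $\chi_F$ (and $\chi_{-F}$) fixes $N(Y) = Y \times Y^\perp$ pointwise; in particular $\chi_F^{-1} b = b$ for every $b \in N(Y)$, and $\chi_F$ maps the transversal $V$ of \defnref{def:Gconormal} onto another transversal $V' := \chi_F V$ of $N(Y)$. (ii) Applying \lemref{lem:FBIsymplectic} to the pairs $(u,g)$ and $(\mu(\chi_F)u, \mu(\chi_F)g)$, inserting $\chi_F^{-1}(x,\xi) = (x, \xi - Fx)$, and combining with the twist \eqref{eq:TgYdef}, a short computation using $\pi_{Y^\perp}F = F$ collapses everything to
\begin{equation*}
\cT_{\mu(\chi_F)g}^Y (\mu(\chi_F) u)(x,\xi) = e^{i \phi(x,\xi)} \, \cT_g^Y u(\chi_F^{-1}(x,\xi)), \quad (x,\xi) \in T^* \rr d,
\end{equation*}
where $\phi(x,\xi) = c \la Fx, x \ra$ for some real constant $c$ (its precise value is irrelevant). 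Since $\mu(\chi_F)$ is a bijection of $\cS(\rr d) \setminus 0$, letting $g$ range over $\cS(\rr d) \setminus 0$ exhausts all admissible windows for $\mu(\chi_F) u$.

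Next I would push the differential operators $L_j = \la b_j, \nabla_{x,\xi} \ra$, $b_j \in N(Y)$, through this identity. On one hand $L_j (e^{i\phi}) = i (L_j \phi) e^{i\phi} = 0$, because $\nabla_x \phi = 2 c\, Fx$ while the $x$-component of $b_j$ lies in $Y \subseteq \Ker F$ and $F$ is symmetric; on the other hand $L_j$ commutes with the linear substitution $\chi_F^{-1}$ because $\chi_F^{-1} b_j = b_j$ by (i). Hence, by induction on $k$,
\begin{equation*}
L_1 \cdots L_k \cT_{\mu(\chi_F)g}^Y (\mu(\chi_F)u)(x,\xi) = e^{i \phi(x,\xi)} \bigl( L_1 \cdots L_k \cT_g^Y u \bigr)(\chi_F^{-1}(x,\xi)),
\end{equation*}
and after taking absolute values the chirp disappears entirely. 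It then remains to compare distances: since $\chi_F$ is a linear automorphism of $T^* \rr d$ mapping $N(Y)$ onto $N(Y)$ and $V$ onto $V'$, the bounds $|\chi_F^{\pm 1} z| \asymp |z|$ give $\dist(\chi_F^{-1}(x,\xi), N(Y)) \asymp \dist((x,\xi), N(Y))$ and $\dist(\chi_F^{-1}(x,\xi), V) \asymp \dist((x,\xi), V')$. Substituting the estimates \eqref{eq:conormchar} for $u$ into the displayed identity therefore shows that $\mu(\chi_F) u$ satisfies \eqref{eq:conormchar} relative to the transversal $V'$, with best constants bounded by those for $u$. Invoking the independence of $I_\Gamma^m(\rr d, Y)$ (and of its topology) of the choice of transversal, established in \cite{Cappiello2}, we conclude that $\mu(\chi_F) u \in I_\Gamma^m(\rr d, Y)$ and that $\mu(\chi_F)$ is continuous, from which the proposition follows as explained above.

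I expect the main obstacle to be the bookkeeping in step (ii): one has to carry simultaneously the twist factor in \eqref{eq:TgYdef}, the metaplectic phase produced by \lemref{lem:FBIsymplectic}, and the chirp of $\mu(\chi_F)$, and verify that they amalgamate into a single quadratic phase of the form $c \la Fx, x \ra$ whose only relevant property is that it is annihilated by every $L_j$ with direction in $N(Y)$. This is precisely where the hypothesis $Y \subseteq \Ker F$ (equivalently $\ran F \subseteq Y^\perp$) is used, and it enters twice: once to reduce the combined phase to $c \la Fx, x\ra$, and once to secure $\chi_F^{-1} b_j = b_j$. A secondary and more routine point is the appeal to transversal-independence of $I_\Gamma^m(\rr d, Y)$; if one prefers to keep the transversal $V$ fixed, the change from $V$ to $V' = \chi_F V$ alters the factor $(1 + \dist(\cdot, V))^{m-k}$ only by a quantity dominated by a fixed power of $1 + \dist(\cdot, N(Y))$, which is absorbed by enlarging the exponent $N$ in the rapidly decreasing factor.
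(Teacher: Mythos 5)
Your proposal is correct and follows essentially the same strategy as the paper: apply Lemma~\ref{lem:FBIsymplectic} to get the transformation rule for $\cT^Y$, observe that the hypothesis $Y\subseteq\Ker F$ makes both the residual chirp $e^{-\frac{i}{2}\la Fx,x\ra}$ and the pullback by $\chi_F^{-1}$ invisible to the operators $L_j$ with directions in $N(Y)$, and then transfer the distance factors. The paper pins the transversal to $V=N(Y^\perp)$ throughout and absorbs the $\chi_F^{-1}$-shift via Peetre's inequality into the rapidly decreasing factor --- exactly your second, ``keep $V$ fixed'' alternative --- so the two arguments coincide in substance; your extra explicit remark that $L_j$ commutes with $(\chi_F^{-1})^*$ because $\chi_F$ fixes $N(Y)$ pointwise is implicit in the paper but is correctly identified as the key mechanism.
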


\begin{proof}
Let $u \in I_\Gamma^m(\rr d,Y)$ and let $g \in \cS(\rr d) \setminus 0$. 
By Lemma \ref{lem:FBIsymplectic}
\begin{equation*}
\cT_{\mu(\chi_F) g} ( \mu(\chi_F) u ) (x,\xi) 
= e^{\frac{i}{2} \la F x,x \ra} \cT_g u (x,\xi-F x). 
\end{equation*}
From \eqref{eq:TgYdef} we obtain
\begin{align*}
\cT_{\mu(\chi_F) g}^Y ( \mu(\chi_F) u ) (x,\xi) 
& = e^{\frac{i}{2} \la F x,x \ra - i \la \pi_{Y^\perp} x, \xi \ra} \cT_g u (x,\xi-F x) \\
& = e^{-\frac{i}{2} \la F x,x \ra - i \la \pi_{Y^\perp} x, \xi - F x \ra} \cT_g u (x,\xi-F x) \\
& = e^{-\frac{i}{2} \la F x,x \ra} \cT_g^Y u (x,\xi-F x). 
\end{align*}

A differential operator of the form $\la a, \nabla_x \ra$ where $a \in Y$, applied to 
$e^{-\frac{i}{2} \la F x,x \ra}$ equals zero, due to the assumption $Y \subseteq \Ker F$. 
Therefore we get from Definition \ref{def:Gconormal}, for any $k,N \in \mathbb{N}$
\begin{equation}\label{eq:conormalchirp}
\begin{aligned}
& \left| L_1 \cdots L_k \cT_{\mu(\chi_F) g}^Y ( \mu(\chi_F) u ) (x,\xi)  \right | \\
& \lesssim \left( 1 + \dist((x,\xi-F x),N(Y^\perp)) \right)^{m-k} \left( 1 + \dist((x,\xi-F x),N(Y)) \right)^{-N}, \\
& \qquad (x,\xi) \in T^* \rr d, 
\end{aligned}
\end{equation}
where 
$L_j = \langle b_j, \nabla_{x,\xi} \rangle$ and $b_j \in N(Y)$, $j=1,\dots,k$.

We have  
\begin{equation*}
\dist^2(\xi-Fx,Y^\perp) = |\pi_{Y} (\xi-Fx) |^2 = |\pi_{Y} \xi |^2
= \dist^2(\xi,Y^\perp). 
\end{equation*}
By means of \eqref{eq:Peetre} we estimate
\begin{align*}
1 + \dist^2((x,\xi-Fx),N(Y^\perp)) 
& = \eabs{(\pi_{Y} x, \pi_{Y^\perp} (\xi-Fx))}^2 \\
& \lesssim \eabs{(\pi_{Y} x, \pi_{Y^\perp} \xi)}^2
\eabs{ \pi_{Y^\perp} x }^2 \\
& \lesssim (1 + \dist^2((x,\xi),N(Y^\perp))) \ (1 + \dist^2((x,\xi),N(Y)))
\end{align*}
and similarly
\begin{align*}
& 1 + \dist^2((x,\xi),N(Y^\perp)) \\
& \lesssim (1 + \dist^2((x,\xi-Fx),N(Y^\perp))) \ (1 + \dist^2((x,\xi),N(Y))). 
\end{align*}
Thus for any $s \in \ro$
\begin{align*}
& (1 + \dist((x,\xi-Fx),N(Y^\perp)))^s \\
& \lesssim (1 + \dist((x,\xi),N(Y^\perp)))^s \ (1 + \dist((x,\xi),N(Y)))^{|s|}, 
\end{align*}
and it follows upon insertion into \eqref{eq:conormalchirp} that we have 
\begin{equation*}
\begin{aligned}
& \left| L_1 \cdots L_k \cT_{\mu(\chi_F) g}^Y ( \mu(\chi_F) u ) (x,\xi)  \right | \\
& \lesssim \left( 1 + \dist((x,\xi),N(Y^\perp)) \right)^{m-k} \left( 1 + \dist((x,\xi),N(Y)) \right)^{-N}, \\
& \qquad (x,\xi) \in T^* \rr d, 
\end{aligned}
\end{equation*}
for any $k,N \in \no$. 

By virtue of Definition \ref{def:Gconormal} 
we have proven that $\mu(\chi_F)$ maps $I_\Gamma^m(\rr d,Y)$ into itself, and 
the continuity is a consequence of the argument. 
The inverse of $\mu(\chi_F)$ is also continuous since $\chi_F^{-1} = \chi_{-F}$. 
\end{proof}

Now we can define $\Gamma$-Lagrangian distributions.

\begin{defn}\label{def:Lagrangiandistribution}
Suppose $\Lambda \subseteq T^* \rr d$ is a Lagrangian defined by a linear subspace $Y \subseteq \rr d$ and a symmetric matrix $F \in \M_{d \times d}( \ro )$ such that $F: Y \to Y$, as in \eqref{eq:Lagrangian}. 
Then $u \in \cS'(\rr d)$ is called a $\Gamma$-Lagrangian distribution with respect to $\Lambda$ of order $m \in \ro$, denoted $u \in I_\Gamma^m(\rr d, \Lambda)$ if $u= \mu(\chi_F) v$ for some $v \in I_\Gamma^m(\rr d,Y)$. 
\end{defn}

\begin{rem}
Note that $\cS(\rr d) \subseteq I_\Gamma^m(\rr d, \Lambda)$ for any Lagrangian $\Lambda \subseteq T^* \rr d$, cf. \cite[Corollary~5.10]{Cappiello2}.
Hence we may calculate modulo Schwartz functions when determining whether a distribution is $\Gamma$-Lagrangian.
\end{rem}

As discussed above the matrix $F$ is not unique in that $F_{Y^\perp}$ may be arbitrary within its stipulated restrictions. 
But since $\chi_F = \chi_{F_Y + F_{Y^\perp}} = \chi_{F_Y} \chi_{F_{Y^\perp}}$
implies 
\begin{equation*}
\mu(\chi_F) = \pm \mu(\chi_{F_Y})  \mu(\chi_{F_{Y^\perp}}), 
\end{equation*}
Definition \ref{def:Lagrangiandistribution} does not depend on $F_{Y^\perp}$, due to Proposition \ref{prop:chirpinvariance}. 

The space $I^m_\Gamma(\rr d,\Lambda)$ is endowed with the topology on $v \in I_\Gamma^m(\rr d,Y)$ referring to the factorization $u= \mu(\chi_F) v$ of $u \in I^m_\Gamma(\rr d,\Lambda)$. 
Again Proposition \ref{prop:chirpinvariance} serves to rid the topology on $I^m_\Gamma(\rr d,\Lambda)$ of dependence on the matrix $F$. 

\begin{rem}
The space $I_\Gamma^m(\rr d, \Lambda)$ reduces to $I_\Gamma^m(\rr d, Y)$ when $\Lambda$ is of the form $\Lambda = Y \times Y^\perp \subseteq T^* \rr d$ for a linear subspace $Y \subseteq \rr d$. 
\end{rem}

\begin{example}
Suppose $1 \leqs n \leqs d-1$, $k=d-n$, $Y= \rr n \times \{0\} \subseteq \rr d$, and 
\begin{equation*}
F = 
\left(
\begin{array}{ll}
A & 0 \\
0 & 0
\end{array}
\right)
\end{equation*}
where $A \in \M_{n \times n}(\ro)$ is symmetric. 
Then 
\begin{equation*}
\Lambda = \{(x_1,0, A x_1, x_2) \in T^* \rr d: x_1 \in \rr n, \ x_2 \in \rr k \}. 
\end{equation*}
By \cite[Lemma~5.4]{Cappiello2} a distribution $u\in I_\Gamma^m(\rr d,\Lambda)$ is of the form
\begin{equation*}
u(x_1,x_2) = \int_{\rr k}  e^{i \left( \frac{1}{2} \la x_1 ,A x_1 \ra + \la x_2,\theta \ra \right)} a(x_1,\theta)\,\dd\theta
\end{equation*}
for $a \in \Gamma^m(\rr d )$.
\end{example}

As observed before Proposition \ref{prop:contSobolevShubin},
$\mu(\chi)$ is a homeomorphism on $Q^s (\rr d)$ for any $\chi \in \Sp(d,\ro)$ and any $s \in \ro$. 
From the estimates \eqref{eq:conormchar} we obtain therefore for any $\ep>0$
\begin{equation*}
I_\Gamma ^m(\rr d,\Lambda) \subseteq Q^{-\left(m+\frac{d}{2}+\ep\right)} (\rr d).
\end{equation*}
Microlocally, $\Gamma$-Lagrangian distributions are however usually more regular than generic elements of $Q^{-\left(m+\frac{d}{2}+\ep\right)}(\rr d)$. 

Combining \cite[Proposition~5.17]{Cappiello2}, \cite[Eq.~(2.18)]{Rodino1} 
and \eqref{eq:chiFiso} gives

\begin{prop}\label{prop:WFLagrangian}
If $u\in I^m_\Gamma(\rr d,\Lambda)$ then $\WF(u) \subseteq \Lambda$.
\end{prop}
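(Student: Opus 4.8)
The plan is to reduce the statement to the already established case $\Lambda = N(Y) = Y \times Y^\perp$ (covered by \cite[Proposition~5.17]{Cappiello2}) by peeling off the metaplectic factor $\mu(\chi_F)$ and tracking how the Gabor wave front set transforms under it. First I would write $u = \mu(\chi_F) v$ with $v \in I_\Gamma^m(\rr d, Y)$, as in Definition~\ref{def:Lagrangiandistribution}. By \cite[Proposition~5.17]{Cappiello2} we have $\WF(v) \subseteq N(Y) = \{(x,\xi): x \in Y, \ \xi \in Y^\perp\}$.

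Next I would invoke the transformation law for the Gabor wave front set under metaplectic operators. Concretely, for any $\chi \in \Sp(d,\ro)$ one has $\WF(\mu(\chi) v) = \chi\, \WF(v)$; this is \cite[Eq.~(2.18)]{Rodino1} (it also follows from Lemma~\ref{lem:FBIsymplectic}, which shows that $\cT_{\mu(\chi)g}(\mu(\chi)v)(z) = e^{i\psi(z)} \cT_g v(\chi^{-1} z)$ for a real phase $\psi$, so that the set of directions of rapid decay of $\cT_g v$ is mapped by $\chi$ to that of $\cT_{\mu(\chi)g}(\mu(\chi)v)$, and the wave front set is independent of the window). Applying this with $\chi = \chi_F$ gives
\begin{equation*}
\WF(u) = \WF(\mu(\chi_F) v) = \chi_F\, \WF(v) \subseteq \chi_F\, N(Y).
\end{equation*}
Finally, by \eqref{eq:chiFiso} the map $\chi_F$ restricts to an isomorphism $Y \times Y^\perp \to \Lambda$, so $\chi_F\, N(Y) = \Lambda$, and hence $\WF(u) \subseteq \Lambda$.

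The only point requiring a little care is the independence of the conclusion from the choice of $F$: $\Lambda$ determines $F_Y$ but not $F_{Y^\perp}$. However, as noted after Definition~\ref{def:Lagrangiandistribution}, replacing $F$ by $F_Y + F_{Y^\perp}'$ changes $\mu(\chi_F)$ only by composition with $\mu(\chi_{F_{Y^\perp}})$ up to sign, and by Proposition~\ref{prop:chirpinvariance} this does not affect membership in $I_\Gamma^m$; moreover $\chi_{F_{Y^\perp}}$ fixes $N(Y)$ pointwise (since $F_{Y^\perp}$ kills $Y$ and preserves $Y^\perp$), so the image $\chi_F N(Y) = \Lambda$ is unchanged. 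Thus the argument is well posed. I do not expect a genuine obstacle here; the proposition is essentially a bookkeeping consequence of the metaplectic covariance of $\WF$ together with the already known conormal case, which is exactly why the paper states it as an immediate corollary.
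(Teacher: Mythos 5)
Your argument is exactly the paper's proof: decompose $u = \mu(\chi_F) v$, apply \cite[Proposition~5.17]{Cappiello2} to get $\WF(v) \subseteq N(Y)$, apply the metaplectic covariance \cite[Eq.~(2.18)]{Rodino1} to get $\WF(u) = \chi_F \WF(v)$, and conclude via \eqref{eq:chiFiso}. The extra discussion of independence from the choice of $F_{Y^\perp}$ and the alternative derivation of the covariance from Lemma~\ref{lem:FBIsymplectic} are sound but not needed; otherwise the proposal matches the paper's one-line proof verbatim.
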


\begin{lem}\label{lem:conormfactor}
Let $0 \leqs n \leqs d$, and
suppose $U = [M_1 \ M_2] \in \On(d)$ with $M_1 \in \M_{d \times n}(\ro)$ and 
$M_2 \in \M_{d \times (d-n)}(\ro)$ and $Y = \Ker M_2^t \subseteq \rr d$. 
Define 
\begin{equation}\label{eq:chiUdef}
\chi^U = 
\left(
  \begin{array}{cc}
  U & 0 \\
  0 & U
  \end{array}
\right) \in \Sp(d,\ro)
\end{equation}
and
\begin{equation}\label{eq:J2invdef}
\J_2^{-1} =
\left(
\begin{array}{cccc}
I_n & 0 & 0 & 0 \\
0 & 0 & 0 & -I_{d-n} \\
0 & 0 & I_n & 0 \\
0 & I_{d-n} & 0 & 0 
\end{array}
\right) \in \Sp(d,\ro).  
\end{equation}
Then 
\begin{equation*}
\chi^U  \J_2^{-1}: \rr d \times \{ 0 \} \to Y \times Y^\perp
\end{equation*}
is an isomorphism. 
\end{lem}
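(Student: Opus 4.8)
The plan is to compute the composite map in block coordinates and read off its image. Throughout write $\rr d = \rr n \times \rr {d-n}$ and split a point of $T^*\rr d$ as $(x_1,x_2,\xi_1,\xi_2)$ with $x_1,\xi_1 \in \rr n$ and $x_2,\xi_2 \in \rr {d-n}$, so that the four blocks match the block form of $\J_2^{-1}$.

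The first step is to identify $Y$ and $Y^\perp$ in terms of $M_1$ and $M_2$. Since $U = [M_1 \ M_2] \in \On(d)$, the columns of $M_1$ and $M_2$ jointly form an orthonormal basis of $\rr d$; hence $\Ran M_1$ and $\Ran M_2$ are mutually orthogonal subspaces of dimensions $n$ and $d-n$, so $\Ran M_2 = (\Ran M_1)^\perp$. Combining this with the elementary identity $\Ker M_2^t = (\Ran M_2)^\perp$ gives
\begin{equation*}
Y = \Ker M_2^t = \Ran M_1, \qquad Y^\perp = \Ran M_2 .
\end{equation*}
In particular $M_1 : \rr n \to Y$ and $M_2 : \rr {d-n} \to Y^\perp$ are linear isomorphisms (surjectivity from these range identities, injectivity from linear independence of the orthonormal columns).

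The second step is the computation on $\rr d \times \{ 0 \}$. Reading the block rows of $\J_2^{-1}$ one finds $\J_2^{-1}(x_1,x_2,0,0) = (x_1,0,0,x_2)$, so $\J_2^{-1}$ restricts to a linear bijection of $\rr d \times \{ 0 \}$ onto $Y_0 \times Y_0^\perp$, where $Y_0 = \rr n \times \{ 0 \} \subseteq \rr d$ and $Y_0^\perp = \{ 0 \} \times \rr {d-n}$. Next $\chi^U$ acts by $(y,\eta) \mapsto (Uy,U\eta)$, and $U(x_1,0) = M_1 x_1$, $U(0,x_2) = M_2 x_2$, so $U$ carries $Y_0$ isomorphically onto $\Ran M_1 = Y$ and $Y_0^\perp$ isomorphically onto $\Ran M_2 = Y^\perp$; hence $\chi^U$ restricts to a linear isomorphism $Y_0 \times Y_0^\perp \to Y \times Y^\perp$. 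Composing the two restrictions, $\chi^U \J_2^{-1}$ restricts to a linear isomorphism $\rr d \times \{ 0 \} \to Y \times Y^\perp$, explicitly $(x_1,x_2,0,0) \mapsto (M_1 x_1, M_2 x_2)$, which is the assertion. (The memberships $\chi^U, \J_2^{-1} \in \Sp(d,\ro)$ stated are straightforward to check from the definitions but are not used here.)

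I do not expect a genuine obstacle; the lemma is entirely elementary linear algebra. The only points requiring care are aligning the four coordinate blocks with the block form of $\J_2^{-1}$ — i.e. keeping straight which output slots are position and which are momentum — and the standard identity $\Ker M_2^t = (\Ran M_2)^\perp$, which is exactly what makes the orthogonal matrix $U$ produce the subspaces $Y$ and $Y^\perp$ from $Y_0$ and $Y_0^\perp$.
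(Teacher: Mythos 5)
Your proof is correct and takes essentially the same route as the paper's: both compute $\chi^U\J_2^{-1}$ acting on $(x_1,x_2,0,0)$ in block coordinates to get $(M_1x_1,M_2x_2)$, and both then read off the image from $Y=\Ran M_1$, $Y^\perp=\Ran M_2$. The only cosmetic difference is that the paper writes out the full $2\times 4$ block matrix of $\chi^U\J_2^{-1}$ before applying it, while you apply the two factors in sequence through the intermediate subspace $(\rr n\times\{0\})\times(\{0\}\times\rr{d-n})$; you also spell out the (tacit in the paper) justification of $Y=\Ran M_1$ from orthogonality of $U$ and $\Ker M_2^t=(\Ran M_2)^\perp$.
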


\begin{proof}
We have $n = \dim Y$. 
The assumptions give
\begin{equation*}
\chi^U  \J_2^{-1} 
= \left(
\begin{array}{cccc}
M_1 & 0 & 0 & -M_2 \\
0 & M_2 & M_1 & 0 \\
\end{array}
\right) \in \Sp(d,\ro). 
\end{equation*}
Denoting $x=(x_1,x_2) \in \rr d$ with $x_1 \in \rr n$ and $x_2 \in \rr {d-n}$ we have
\begin{equation*}
\chi^U  \J_2^{-1} (x,0) 
= (M_1 x_1, M_2 x_2), \quad x \in \rr d, 
\end{equation*}
which proves the claim since $Y = \Ran M_1$ and $Y^\perp = \Ran M_2$.
\end{proof}

\begin{lem}\label{lem:sympiso}
If $\chi \in \Sp(d,\ro)$ preserves $\rr d \times \{0\}$ then $\mu(\chi)$ is a homeomorphism on $\Gamma^m(\rr d)$. 
\end{lem}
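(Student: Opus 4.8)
The plan is to bring $\chi$ to a normal form and factor it into two symplectic matrices each of which acts transparently on $\Gamma^m(\rr d)$. First I would unwind the hypothesis. Writing $\chi$ in block form \eqref{eq:symplecticABCD}, the condition $\chi(\rr d\times\{0\}) = \rr d\times\{0\}$ forces $C=0$, and the symplectic relations \eqref{eq:sympmatrix1}--\eqref{eq:sympmatrix2} then give $A\in\GL(d,\ro)$, $D=A^{-t}$ and that $E:=A^{-1}B$ is symmetric. Hence $\chi = \chi_1\chi_2$ with
\[
\chi_1 = \begin{pmatrix} A & 0\\ 0 & A^{-t}\end{pmatrix}, \qquad \chi_2 = \begin{pmatrix} I & E\\ 0 & I\end{pmatrix},
\]
so by \eqref{eq:metaplechomo} it suffices to prove that $\mu(\chi_1)$ and $\mu(\chi_2)$ each map $\Gamma^m(\rr d)$ continuously into itself. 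Since $\chi_1^{-1}$ and $\chi_2^{-1}$ are again of these two respective types, the same statements applied to them give continuity of the inverse, hence the homeomorphism property.

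For $\chi_1$: the metaplectic operator $\mu(\chi_1)$ equals, up to a constant in $\co\setminus 0$, the pullback $f\mapsto f\circ S$ for some fixed $S\in\GL(d,\ro)$ (cf. \cite{Folland1}). I would then just observe that $\Gamma^m(\rr d)$ is stable under composition with a fixed $S\in\GL(d,\ro)$: by the chain rule $\partial^\alpha(a\circ S)$ is a constant-coefficient linear combination of the functions $(\partial^\beta a)\circ S$, $|\beta|=|\alpha|$, while $\eabs{Sx}\asymp\eabs{x}$, so the Shubin estimates \eqref{eq:shubinestimate} for $a\circ S$ follow from those for $a$ with the best constants controlled linearly. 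This gives continuity of $\mu(\chi_1)$ on $\Gamma^m(\rr d)$.

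For $\chi_2$ I would argue exactly as in Remark \ref{rem:quantization}, working on $\rr d$ instead of $\rr {2d}$. The operator $\mu(\chi_2)$ is of Fresnel type — up to a unimodular factor it is $\cF$ followed by multiplication by the chirp $e^{-\frac{i}{2}\la E\cdot,\cdot\ra}$ followed by $\cF^{-1}$ — and, since elements of $\Gamma^m(\rr d)$ are merely polynomially bounded, it cannot be handled by manipulating the defining integrals; instead one passes to the phase-space side. For $a\in\Gamma^m(\rr d)$ and $g\in\cS(\rr d)\setminus 0$, using $\chi_2^{-1}(x,\xi)=(x-E\xi,\xi)$, Lemma \ref{lem:FBIsymplectic} yields
\[
\cT_{\mu(\chi_2)g}\big(\mu(\chi_2)a\big)(x,\xi) = e^{\frac{i}{2}\la E\xi,\xi\ra}\,\cT_g a(x-E\xi,\xi), \qquad (x,\xi)\in T^*\rr d.
\]
The FBI-transform characterization of the Shubin class \cite[Proposition~2.2]{Cappiello2}, combined with Peetre's inequality \eqref{eq:Peetre} — the substitution $(x,\xi)\mapsto(x-E\xi,\xi)$ fixes the $\xi$-variable and translates $x$ by a multiple of $\xi$, hence preserves the governing weights up to multiplicative constants — then shows $\mu(\chi_2)a\in\Gamma^m(\rr d)$ with continuous dependence on $a$. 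This last step is the only point that is not pure bookkeeping with block matrices and the chain rule, and it is precisely where the phase-space description of $\Gamma^m$ from \cite{Cappiello2} is needed; combining it with the previous paragraph and the factorization completes the proof.
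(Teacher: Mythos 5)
Your proof is correct, and the route is genuinely (if mildly) different from the paper's. The paper factors $\chi = \bigl(-\J\,\chi_{-BA^t}\,\J\bigr)\, T$ with $T = \mu\!\left(\begin{smallmatrix}A & 0\\ 0 & A^{-t}\end{smallmatrix}\right)$, observes $T$ preserves $\Gamma^m(\rr d)$, and then handles the Fresnel piece by passing through the conormal-distribution machinery already built in \cite{Cappiello2}: $\cF$ is a homeomorphism $\Gamma^m(\rr d)\to I_\Gamma^m(\rr d,\{0\})$, and Proposition~\ref{prop:chirpinvariance} gives invariance of $I_\Gamma^m(\rr d,\{0\})$ under $\mu(\chi_{-BA^t})$. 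You instead factor $\chi = \chi_1\chi_2$ with the scaling on the left and the shear on the right (both factorizations are valid; $E=A^{-1}B$ symmetric and $BA^t$ symmetric are equivalent facts), treat the scaling by the chain rule exactly as the paper does, but treat the shear directly: you compute $\cT_{\mu(\chi_2)g}(\mu(\chi_2)a)$ via Lemma~\ref{lem:FBIsymplectic} and close the argument with the FBI-transform characterization of $\Gamma^m$ from \cite[Proposition~2.2]{Cappiello2}. That is precisely the style of argument the paper uses in Remark~\ref{rem:quantization} (on $\rr{2d}$), so you are reusing tools already on the table — but you bypass the $\Gamma^m\leftrightarrow I_\Gamma^m(\{0\})$ detour and Proposition~\ref{prop:chirpinvariance} entirely. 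The trade-off: the paper's proof is shorter given the conormal machinery it has already invested in, whereas yours is more self-contained and makes the phase-space mechanism visible in one line, $\cT_{\mu(\chi_2)g}(\mu(\chi_2)a)(x,\xi)=e^{\frac{i}{2}\la E\xi,\xi\ra}\cT_g a(x-E\xi,\xi)$, which is arguably the cleaner takeaway. Your treatment of the inverse (both $\chi_1^{-1}$ and $\chi_2^{-1}$ are again of the two handled types) correctly yields the homeomorphism claim.
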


\begin{proof}
Using the block matrix notation \eqref{eq:symplecticABCD}, the properties \eqref{eq:sympmatrix1}, \eqref{eq:sympmatrix2} and \eqref{eq:sympinvABCD}, 
the assumption entails
\begin{equation*}
\chi
= \left( 
\begin{array}{cc}
A & B \\
0 & A^{-t}
\end{array}
\right)
= -\J \left( 
\begin{array}{cc}
I & 0 \\
-BA^t & I
\end{array}
\right)
\J
\left( 
\begin{array}{cc}
A & 0 \\
0 & A^{-t}
\end{array}
\right).
\end{equation*}
Note that $BA^t$ is symmetric and
\begin{equation*}
T f(x)  := 
\mu
\left( 
\begin{array}{cc}
A & 0 \\
0 & A^{-t}
\end{array}
\right) f(x)
= |A|^{-1/2} f(A^{-1} x) = |A|^{-1/2} (A^{-1})^* f (x)
\end{equation*}
for $f \in \cS(\rr d)$. 
Combined with \eqref{eq:metaplecticFourier} and the notation \eqref{eq:chichirp} this gives
\begin{equation*}
\mu(\chi)=\pm \cF^{-1} \mu(\chi_{-BA^t}) \cF 
T. 
\end{equation*}

Clearly $T$ is a homeomorphism on $\Gamma^m(\rr d)$. 
By \cite[Corollary~5.5 and Proposition~5.12]{Cappiello2} $\cF: \Gamma^m(\rr d) \to I_\Gamma^m(\rr d, \{0\})$ is a homeomorphism. 
The claim is hence a consequence of the fact that $\mu(\chi_{-BA^t})$ is a homeomorphism on $I_\Gamma^m(\rr d, \{0\})$, 
which is granted by Proposition \ref{prop:chirpinvariance}. 
\end{proof}

Next we observe that pseudodifferential operators act well on $\Gamma$-Lagrangian distributions.
This generalizes \cite[Proposition~5.19]{Cappiello2}.  

\begin{lem}\label{lem:pseudomap}
Let $a \in \Gamma^{m'}(\rr {2d})$ and suppose $\Lambda \subseteq T^* \rr d$ is a Lagrangian. 
Then
\begin{equation*}
a^w(x,D):I^m_\Gamma(\rr d,\Lambda) \to I^{m+m'}_\Gamma(\rr d,\Lambda)
\end{equation*}
is continuous. 
\end{lem}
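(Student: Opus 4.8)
The plan is to reduce the statement to the conormal case \cite[Proposition~5.19]{Cappiello2} by conjugating with the metaplectic operator $\mu(\chi_F)$ that defines the Lagrangian $\Lambda$, as in Definition \ref{def:Lagrangiandistribution}. Write $\Lambda$ in the form \eqref{eq:Lagrangian} with subspace $Y \subseteq \rr d$ and symmetric $F$ leaving $Y$ invariant, and (using the freedom in $F_{Y^\perp}$, which is harmless by Proposition \ref{prop:chirpinvariance}) arrange that $Y \subseteq \Ker F$, so that $\chi_F$ is of the form \eqref{eq:chichirp}. Given $u \in I^m_\Gamma(\rr d,\Lambda)$, by definition $u = \mu(\chi_F) v$ with $v \in I^m_\Gamma(\rr d,Y)$, and we must show $a^w(x,D) u \in I^{m+m'}_\Gamma(\rr d,\Lambda)$, i.e. that $\mu(\chi_F)^{-1} a^w(x,D) \mu(\chi_F) v \in I^{m+m'}_\Gamma(\rr d,Y)$, with continuous dependence.

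First I would apply the symplectic covariance \eqref{eq:metaplecticoperator}: since $\chi_F^{-1} = \chi_{-F}$,
\begin{equation*}
\mu(\chi_F)^{-1} a^w(x,D) \mu(\chi_F) = (a \circ \chi_F)^w(x,D).
\end{equation*}
The next step is to check that the transformed symbol $b := a \circ \chi_F$ still belongs to $\Gamma^{m'}(\rr {2d})$. Here $\chi_F$ is the linear map $(x,\xi) \mapsto (x, Fx + \xi)$, so $b(x,\xi) = a(x, Fx+\xi)$; since $|x| + |Fx+\xi| \asymp |x| + |\xi|$ (because $|Fx+\xi| \leqs \|F\||x| + |\xi|$ and $|\xi| \leqs \|F\||x| + |Fx+\xi|$) and $b$'s derivatives are finite linear combinations of derivatives of $a$ evaluated at $(x,Fx+\xi)$ by the chain rule, the Shubin estimates \eqref{eq:shubinestimate} (with $\rho=1$) are preserved. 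Thus $b^w(x,D) = (a \circ \chi_F)^w(x,D)$ with $b \in \Gamma^{m'}(\rr {2d})$, and the seminorms of $b$ are controlled by those of $a$.

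Finally, the statement reduces to the conormal case: by \cite[Proposition~5.19]{Cappiello2} the operator
\begin{equation*}
b^w(x,D): I^m_\Gamma(\rr d, Y) \to I^{m+m'}_\Gamma(\rr d, Y)
\end{equation*}
is continuous, hence $b^w(x,D) v \in I^{m+m'}_\Gamma(\rr d,Y)$, and applying $\mu(\chi_F)$ we obtain $a^w(x,D) u = \mu(\chi_F) \left( b^w(x,D) v \right) \in I^{m+m'}_\Gamma(\rr d,\Lambda)$ by Definition \ref{def:Lagrangiandistribution}. Continuity follows by tracking the two reductions through the topologies: the map $v \mapsto b^w(x,D)v$ is continuous by \cite[Proposition~5.19]{Cappiello2} with operator seminorm bounded in terms of finitely many Shubin seminorms of $b$, hence of $a$, and the topologies on $I^m_\Gamma(\rr d,\Lambda)$ and $I^{m+m'}_\Gamma(\rr d,\Lambda)$ are by definition transported from the corresponding conormal spaces via $\mu(\chi_F)$, so no further estimates are needed. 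The only point requiring a little care — and the mildest obstacle — is the independence of the whole argument from the non-unique choice of $F_{Y^\perp}$, which is exactly what Proposition \ref{prop:chirpinvariance} is designed to handle, so it costs nothing here.
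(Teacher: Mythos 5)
Your proposal is correct and takes essentially the same approach as the paper: conjugate by $\mu(\chi_F)$, invoke the symplectic covariance \eqref{eq:metaplecticoperator} to replace $a^w(x,D)$ by $(a\circ\chi_F)^w(x,D)$, and then apply the conormal case \cite[Proposition~5.19]{Cappiello2}; your explicit check that $a\circ\chi_F\in\Gamma^{m'}(\rr{2d})$ is a (correct) detail the paper leaves implicit. One small caveat: your parenthetical "arrange that $Y\subseteq\Ker F$" is not achievable in general, since $F_Y$ is determined by $\Lambda$ and only $F_{Y^\perp}$ is free (so one can at best arrange $Y^\perp\subseteq\Ker F$, as in Proposition \ref{prop:Lagrangianchar}) — but the argument never uses that assumption, so the remark is superfluous rather than a gap.
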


\begin{proof}
In \cite[Proposition~5.19]{Cappiello2} the continuity
\begin{equation}\label{eq:contconormal}
a^w(x,D):I^m_\Gamma(\rr d,Y) \to I^{m+m'}_\Gamma(\rr d,Y)
\end{equation}
is proved. 
Suppose $u \in I^m_\Gamma(\rr d,\Lambda)$, that is $u=\mu(\chi_F) v$ where $v \in I^m_\Gamma(\rr d,Y)$ and where $F \in \M_{d \times d}(\ro)$ and $Y \subseteq \rr d$ are associated to $\Lambda$ as in \eqref{eq:Lagrangian}. 
We obtain using \eqref{eq:metaplecticoperator}
\begin{align*}
a^w(x,D)u 
& = \mu(\chi_{F}) \mu(\chi_{F})^{-1 }a^w(x,D) \mu(\chi_{F}) v \\
& = \mu(\chi_{F}) (a \circ \chi_F)^w(x,D) v 
\end{align*}
which proves the result since $(a \circ \chi_F)^w(x,D) v \in  I^{m+m'}_\Gamma(\rr d,Y)$ by \eqref{eq:contconormal}. 
The continuity claim is a consequence of the continuity \eqref{eq:contconormal} and the definition of the topology on $I^m_\Gamma(\rr d,\Lambda)$.
\end{proof}

With the help of Lemma \ref{lem:pseudomap} we can prove a continuity result for FIOs acting on $\Gamma$-Lagrangian distributions.  
Note that $\chi \Lambda \subseteq T^* \rr d$ is Lagrangian provided $\Lambda \subseteq T^* \rr d$ is Lagrangian and $\chi \in \Sp(d, \ro )$. 

\begin{thm}
\label{thm:FIOonLag}
Suppose $\chi \in \Sp(d, \ro )$, $\cK \in \cI^{m'}(\chi)$ and let $\Lambda \subseteq T^* \rr d$ be a Lagrangian. 
Then
\begin{equation*}
\cK: I^m_\Gamma(\rr d,\Lambda) \to  I^{m+m'}_\Gamma (\rr d,\chi \Lambda)
\end{equation*}
is continuous. 
\end{thm}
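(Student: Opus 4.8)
The plan is to combine the factorization of Theorem~\ref{thm:repFIO} with the behaviour of $\Gamma$-conormal distributions under metaplectic operators. First I would use Theorem~\ref{thm:repFIO} to write $\cK = b^w(x,D)\,\mu(\chi)$ with $b \in \Gamma^{m'}(\rr {2d})$. Since Lemma~\ref{lem:pseudomap} already gives continuity of $b^w(x,D): I^m_\Gamma(\rr d,\chi\Lambda) \to I^{m+m'}_\Gamma(\rr d,\chi\Lambda)$, the whole statement reduces to showing that $\mu(\chi): I^m_\Gamma(\rr d,\Lambda) \to I^m_\Gamma(\rr d,\chi\Lambda)$ is continuous; the theorem then follows by composing the two.

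For this metaplectic step I would present $\Lambda$ by a pair $(Y,F)$ as in \eqref{eq:Lagrangian} and $\chi\Lambda$ by a pair $(Y',F')$ likewise, so that $\chi_F:Y\times Y^\perp\to\Lambda$ and $\chi_{F'}:Y'\times(Y')^\perp\to\chi\Lambda$ are isomorphisms by \eqref{eq:chiFiso}. Given $u\in I^m_\Gamma(\rr d,\Lambda)$, Definition~\ref{def:Lagrangiandistribution} furnishes $v\in I^m_\Gamma(\rr d,Y)$ with $u=\mu(\chi_F)v$. Putting $\psi=\chi_{F'}^{-1}\chi\chi_F\in\Sp(d,\ro)$ and writing $N(W)=W\times W^\perp$, the three isomorphisms force $\psi(N(Y))=N(Y')$, and \eqref{eq:metaplechomo} gives $\mu(\chi)u=\pm\mu(\chi_{F'})\,\mu(\psi)v$. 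By Definition~\ref{def:Lagrangiandistribution} it therefore suffices to prove the claim: \emph{if $\psi\in\Sp(d,\ro)$ and $\psi(N(Y))=N(Y')$ then $\mu(\psi):I^m_\Gamma(\rr d,Y)\to I^m_\Gamma(\rr d,Y')$ is continuous.}

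This claim I would prove by a direct computation modelled on Proposition~\ref{prop:chirpinvariance}. By Lemma~\ref{lem:FBIsymplectic}, $\cT_{\mu(\psi)g}(\mu(\psi)v)$ equals $\cT_g v\circ\psi^{-1}$ times an explicit quadratic chirp; combining this with the twist \eqref{eq:TgYdef} attached to $Y'$ on the left and to $Y$ on the right gives
\[
\cT^{Y'}_{\mu(\psi)g}(\mu(\psi)v)(z)=e^{i\Theta(z)}\,\cT^Y_g v(\psi^{-1}z),\qquad z\in T^*\rr d,
\]
where $\Theta=\tfrac12\beta_{Y'}-\tfrac12(\beta_Y\circ\psi^{-1})$ with $\beta_W(x,\xi)=\la(\pi_W-\pi_{W^\perp})x,\xi\ra$. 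The decisive observation is that the symmetric matrix of $\Theta$, namely $\tfrac12(\mathcal{B}_{Y'}-(\psi^{-1})^t\mathcal{B}_Y\psi^{-1})$ with $\mathcal{B}_W$ the symmetric matrix of $\beta_W$, annihilates $N(Y')$: a direct check shows $\mathcal{B}_W$ restricts to $-\J$ on $N(W)$, so for $b\in N(Y')$, using $\psi^{-1}b\in N(Y)$ and the symplectic identity $\psi^t\J\psi=\J$, one gets $(\psi^{-1})^t\mathcal{B}_Y\psi^{-1}b=-(\psi^{-1})^t\J\psi^{-1}b=-\J b=\mathcal{B}_{Y'}b$. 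Hence each $L_j=\la b_j,\nabla\ra$ with $b_j\in N(Y')$ kills $e^{i\Theta}$, so $L_1\cdots L_k\bigl(e^{i\Theta}\,\cT^Y_g v\circ\psi^{-1}\bigr)=e^{i\Theta}\,(\widetilde L_1\cdots\widetilde L_k\cT^Y_g v)\circ\psi^{-1}$ with $\widetilde L_j=\la\psi^{-1}b_j,\nabla\ra$ in the direction $\psi^{-1}b_j\in N(Y)$. Transporting the estimates \eqref{eq:conormchar} for $v$ through the linear change $z\mapsto\psi^{-1}z$ --- which alters Euclidean distances to subspaces only up to fixed constants and maps $N(Y')$ onto $N(Y)$ and a complement of $N(Y)$ onto one of $N(Y')$ --- yields \eqref{eq:conormchar} for $\mu(\psi)v$ relative to $Y'$, with constants controlled by those of $v$. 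This gives the mapping property together with its continuity, and unwinding the reductions proves the theorem.

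The one genuinely delicate point I anticipate is the verification that the chirp $\Theta$ degenerates along $N(Y')$, i.e.\ identifying $\Theta$ from Lemma~\ref{lem:FBIsymplectic} and reducing its degeneracy to the symplectic relation for $\psi$ together with the elementary fact that $\beta_W$ behaves like $-\J$ on $N(W)$; the remaining bookkeeping, including the comparison of the two distance factors in \eqref{eq:conormchar} after the substitution, is routine.
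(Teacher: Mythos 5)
Your proposal is correct, and after the common first reduction (using Theorem~\ref{thm:repFIO} and Lemma~\ref{lem:pseudomap} to boil the theorem down to $\mu(\chi)\colon I^m_\Gamma(\rr d,\Lambda)\to I^m_\Gamma(\rr d,\chi\Lambda)$), it diverges from the paper's argument at the metaplectic step. The paper handles that step \emph{indirectly}: it invokes the representation result \cite[Proposition~5.9]{Cappiello2} to write every conormal distribution as $\pm\mu(\chi^U\J_2^{-1})a$ with $a\in\Gamma^m(\rr d)$, strings together the isomorphisms $\rr d\times\{0\}\to Y\times Y^\perp\to\Lambda\to\chi\Lambda\to Y'\times(Y')^\perp\to\rr d\times\{0\}$ to produce a symplectic matrix $\chi_0$ that preserves $\rr d\times\{0\}$, and concludes via Lemma~\ref{lem:sympiso} that $\mu(\chi_0)$ preserves $\Gamma^m(\rr d)$. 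Your route is \emph{direct}: you transport the conormal estimates \eqref{eq:conormchar} through Lemma~\ref{lem:FBIsymplectic} and verify that the resulting chirp $\Theta=\tfrac12\beta_{Y'}-\tfrac12\beta_Y\circ\psi^{-1}$ is annihilated along $N(Y')$, so the first-order operators $L_j$ pass through $e^{i\Theta}$ and become the transported operators $\widetilde L_j$ along $N(Y)$, exactly as in Proposition~\ref{prop:chirpinvariance}; this reproduces the estimates after the linear change $z\mapsto\psi^{-1}z$. The one spot worth flagging in your write-up: the assertion that $\mathcal{B}_W$ ``restricts to $-\J$ on $N(W)$'' is correct only with the convention $\beta_W(z)=\tfrac12\la z,\mathcal{B}_W z\ra$; with $\beta_W(z)=\la z,\mathcal{B}_W z\ra$ one gets $-\tfrac12\J$, but the $\tfrac12$ cancels in the comparison $(\psi^{-1})^t\mathcal{B}_Y\psi^{-1}=\mathcal{B}_{Y'}$ on $N(Y')$, so the computation is unaffected. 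You also implicitly use the independence of \eqref{eq:conormchar} of the transversal $V$ (since $\psi$ moves $V$ to $\psi V$, a possibly different complement of $N(Y')$); that equivalence follows from Peetre's inequality together with the bound $\dist(z,V')\lesssim\dist(z,V)+\dist(z,N(Y))$ and is part of the well-posedness of Definition~\ref{def:Gconormal}, so it is fair game, but it deserves a sentence. What your route buys is self-containment --- it avoids the detour through \cite[Proposition~5.9]{Cappiello2} and Lemma~\ref{lem:sympiso} and is visibly a generalization of Proposition~\ref{prop:chirpinvariance}; what the paper's route buys is that, once the machinery of Corollary~\ref{cor:Lagmeta} is in place, the proof is essentially one line of bookkeeping with symplectic matrices. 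Both are valid.
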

\begin{proof}
By Theorem \ref{thm:repFIO}, $\cK = b^w(x,D) \mu(\chi)$ for $b \in \Gamma^{m'}(\rr {2d})$. 
Appealing to Lemma \ref{lem:pseudomap} it therefore suffices to show that 
\begin{equation*}
\mu(\chi):I^m_\Gamma(\rr d,\Lambda) \to I^m_\Gamma (\rr d,\chi \Lambda)
\end{equation*}
is continuous. 

Suppose $\Lambda \subseteq T^* \rr d$ is parametrized by $Y \subseteq \rr d$ and $F \in \M_{d \times d}(\ro)$ as in \eqref{eq:Lagrangian}, 
and likewise that the Lagrangian $\chi \Lambda \subseteq T^* \rr d$ is parametrized by $Y' \subseteq \rr d$ and $F' \in \M_{d \times d}(\ro)$. 
Let $u \in I^m_\Gamma(\rr d,\Lambda)$ so that $u = \mu(\chi_F) v$ with $v \in I^m_\Gamma(\rr d,Y)$.

We need to show 
\begin{align}\label{eq:redstep}
\mu(\chi)u=\mu(\chi) \mu(\chi_F) v=\mu(\chi_{F'}) v'
\end{align}
for some $v' \in I^m_\Gamma(\rr d, Y')$. 

Set $n = \dim Y$. 
By \cite[Proposition~5.9]{Cappiello2} 
we have $v \in I^m_\Gamma(\rr d,Y)$ if and only if there exists $a \in \Gamma^m(\rr d)$
and $U = [M_1 \ M_2] \in \On(d)$, with $M_1 \in \M_{d \times n}(\ro)$,  
$M_2 \in \M_{d \times (d-n)}(\ro)$ and $Y = \Ker M_2^t$, such that $v = U^{t *} \cF_2^{-1} a$. 
Since $U^{t *} = \mu(\chi^U)$ and $\cF_2^{-1} = \mu(\J_2^{-1})$ using the notation \eqref{eq:chiUdef} and \eqref{eq:J2invdef}, 
we may write $v = \pm \mu(\chi^U \J_2^{-1}) a$. 

Thus \eqref{eq:redstep} may be written
\begin{equation*}
\pm \mu(\chi \, \chi_F \, \chi^U \J_2^{-1}) a 
= \mu(\chi_{F'}) v '. 
\end{equation*}

Again by \cite[Proposition~5.9]{Cappiello2}, the claim $v' \in I^m_\Gamma(\rr d, Y')$ can be proved by showing
$v' = \mu(\chi^V \J_2^{-1}) b$ where $b \in \Gamma^m(\rr d)$, 
$V = [N_1 \ N_2] \in \On(d)$, with $N_1 \in \M_{d \times k}(\ro)$ and 
$N_2 \in \M_{d \times (d-k)}(\ro)$ such that $Y' = \Ker N_2^t$ and $k = \dim Y'$. 
 
With these terms we must show
\begin{equation*}
b = \mu( \J_2 \, \chi^{V^t} \, \chi_{F'}^{-1} \, \chi \, \chi_F \, \chi^U \, \J_2^{-1}) a \in \Gamma^m(\rr d) 
\end{equation*}
and also the continuity of $a \mapsto b$ on $\Gamma^m(\rr d)$ (cf. \cite{Cappiello2}). 

Set 
\begin{equation*}
\chi_0 = \J_2 \, \chi^{V^t} \, \chi_{F'}^{-1} \, \chi \, \chi_F \, \chi^U \, \J_2^{-1} \in \Sp(d,\ro)
\end{equation*}
so that $b = \mu(\chi_0) a$.

From Lemma \ref{lem:conormfactor}, and by definition of $\Lambda$ and $\chi \Lambda$, and \eqref{eq:chiFiso}, we obtain the following sequence of isomorphisms concerning the symplectic matrices at hand. 
\begin{equation*}
\rr d \times \{0\} \stackrel{\chi^U \J_2^{-1}} {\longrightarrow} Y\times Y^\perp \stackrel{\chi_F}{\longrightarrow} \Lambda \stackrel{\chi}{\longrightarrow} \chi \Lambda \stackrel{\chi_{F'}^{-1}}{\longrightarrow} Y' \times Y'^\perp \stackrel{\J_2 \chi^{V^t}}{\longrightarrow} \rr d \times \{0\}.
\end{equation*}
Hence $\chi_0$ restricts to an isomorphism on $\rr d \times\{0\}$. 
The claim is thus a consequence of Lemma \ref{lem:sympiso}. 
\end{proof}

Lemma \ref{lem:conormfactor}, 
\eqref{eq:chiFiso} and the proof of Theorem \ref{thm:FIOonLag} give the following characterization of $\Gamma$-Lagrangian distributions. 

\begin{cor}\label{cor:Lagmeta}
A distribution $u\in\cS'(\rr{d})$ satisfies $u\in I_\Gamma^m(\rr{d},\Lambda)$ if and only if there exist $\chi \in \Sp(d,\ro)$ that maps $\chi: \rr d \times\{0\} \to \Lambda$ isomorphically, and $a \in \Gamma^m(\rr d)$ such that $u= \mu (\chi) a$. 
\end{cor}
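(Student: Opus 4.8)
The plan is to prove both implications using the factorization machinery developed above, in particular Corollary \ref{cor:Lagmeta} being essentially a repackaging of Definition \ref{def:Lagrangiandistribution} together with the isomorphism chain isolated in the proof of Theorem \ref{thm:FIOonLag}. For the ``only if'' direction, suppose $u \in I_\Gamma^m(\rr d, \Lambda)$. By Definition \ref{def:Lagrangiandistribution} we have $u = \mu(\chi_F) v$ for some $v \in I_\Gamma^m(\rr d, Y)$, where $Y \subseteq \rr d$ and the symmetric matrix $F \in \M_{d\times d}(\ro)$ parametrize $\Lambda$ as in \eqref{eq:Lagrangian}. By \cite[Proposition~5.9]{Cappiello2}, $v = \pm \mu(\chi^U \J_2^{-1}) a$ for some $a \in \Gamma^m(\rr d)$ and $U = [M_1\ M_2] \in \On(d)$ with $Y = \Ker M_2^t$, using the notation \eqref{eq:chiUdef} and \eqref{eq:J2invdef}. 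Setting $\chi = \chi_F \chi^U \J_2^{-1} \in \Sp(d,\ro)$, we get $u = \pm\mu(\chi) a$, and the sign can be absorbed into $a$ since $-a \in \Gamma^m(\rr d)$ as well. It remains to check that $\chi: \rr d \times \{0\} \to \Lambda$ is an isomorphism; but this is exactly the composition
\begin{equation*}
\rr d \times \{0\} \stackrel{\chi^U \J_2^{-1}}{\longrightarrow} Y \times Y^\perp \stackrel{\chi_F}{\longrightarrow} \Lambda
\end{equation*}
of the isomorphisms furnished by Lemma \ref{lem:conormfactor} and \eqref{eq:chiFiso}.

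For the ``if'' direction, suppose $\chi \in \Sp(d,\ro)$ maps $\rr d \times \{0\}$ isomorphically onto $\Lambda$ and $u = \mu(\chi) a$ with $a \in \Gamma^m(\rr d)$. I would first note that $a \in I_\Gamma^m(\rr d, \rr d \times \{0\})$: indeed, taking $n = d$, $U = I$, $M_2$ non-existent, so $Y = \rr d$, we have by \cite[Corollary~5.5 and Proposition~5.12]{Cappiello2} (or directly from the definitions) that $\Gamma^m(\rr d) = I_\Gamma^m(\rr d, \rr d \times \{0\})$. Thus it suffices to show that $\mu(\chi)$ maps $I_\Gamma^m(\rr d, \rr d \times \{0\})$ into $I_\Gamma^m(\rr d, \Lambda)$ whenever $\chi: \rr d \times \{0\} \to \Lambda$ is an isomorphism. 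But the Lagrangian $\rr d \times \{0\}$ is parametrized as in \eqref{eq:Lagrangian} by $Y = \rr d$ and $F = 0$, so $\chi_F = I$, and the desired mapping property is precisely the continuity statement proved inside Theorem \ref{thm:FIOonLag} (with $\Lambda$ there replaced by $\rr d \times \{0\}$ and $\chi \Lambda = \Lambda$). Concretely, following that proof, $v' := \mu(\chi) a$ equals $\mu(\chi_{F'}) v$ where $\Lambda$ is parametrized by $Y', F'$, and $v = \mu(\chi_{F'}^{-1} \chi) a \in I_\Gamma^m(\rr d, Y')$ by Lemma \ref{lem:sympiso} applied to the symplectic matrix $\chi_0 = \J_2 \chi^{V^t} \chi_{F'}^{-1} \chi \J_2^{-1}$, which restricts to an isomorphism of $\rr d \times \{0\}$ by the isomorphism chain. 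Hence $u \in I_\Gamma^m(\rr d, \Lambda)$ by Definition \ref{def:Lagrangiandistribution}.

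The main obstacle I anticipate is purely bookkeeping: in the ``if'' direction one must carefully track which subspace $Y'$ and which matrix $F'$ parametrize the given $\Lambda$, and verify that the composite $\chi_0$ genuinely preserves $\rr d \times \{0\}$ so that Lemma \ref{lem:sympiso} applies — the subtlety being that $\chi$ is only assumed to send $\rr d \times \{0\}$ onto $\Lambda$, not to be of any special block form. This is handled, exactly as in Theorem \ref{thm:FIOonLag}, by inserting the intermediate identifications $Y' \times Y'^\perp$ via Lemma \ref{lem:conormfactor} and \eqref{eq:chiFiso}, so that the question reduces to the behavior of metaplectic operators on $\Gamma^m(\rr d)$ under symplectic maps fixing the zero section, which is settled by Lemma \ref{lem:sympiso}. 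Beyond this, the argument is a direct chase through Definition \ref{def:Lagrangiandistribution} and \cite[Proposition~5.9]{Cappiello2}.
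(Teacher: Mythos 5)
Your proof is correct and follows exactly the route the paper intends: the ``only if'' direction unwinds Definition \ref{def:Lagrangiandistribution} and \cite[Proposition~5.9]{Cappiello2} together with Lemma \ref{lem:conormfactor} and \eqref{eq:chiFiso}, while the ``if'' direction is simply Theorem \ref{thm:FIOonLag} applied with source Lagrangian $\rr d\times\{0\}$ and target $\chi\cdot(\rr d\times\{0\})=\Lambda$. Two small notational slips in the ``if'' direction do not affect the argument: the trailing $\J_2^{-1}$ in your $\chi_0$ is the identity here (since $n=\dim Y=d$), and the membership $v\in I_\Gamma^m(\rr d,Y')$ follows from Lemma \ref{lem:sympiso} \emph{combined with} \cite[Proposition~5.9]{Cappiello2}, not from Lemma \ref{lem:sympiso} alone.
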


\begin{rem}\label{rem:Lagmeta}
Given a Lagrangian $\Lambda \subseteq T^* \rr d$, 
the existence of $\chi \in \Sp(d,\ro)$ with the stipulated property is a consequence of 
Lemma \ref{lem:conormfactor} and \eqref{eq:chiFiso}. 
By Lemma \ref{lem:sympiso}, the equivalent statement in Corollary \ref{cor:Lagmeta} can be reformulated 
as follows. 
\emph{For all $\chi \in \Sp(d,\ro)$ that maps $\chi: \rr d \times\{0\} \to \Lambda$ isomorphically there exists $a \in \Gamma^m(\rr d)$ such that $u= \mu (\chi) a$.}
\end{rem}

Finally we prove a time-frequency characterization of $\Gamma$-Lagrangian distributions similar to that of conormal distributions, see Definition \ref{def:Gconormal}. 
Without loss of generality we may assume $Y^\perp \subseteq \Ker F$. 

\begin{prop}\label{prop:Lagrangianchar}
Let $\Lambda \subseteq T^* \rr d$ be a Lagrangian and let $V \subseteq T^* \rr d$ be a subspace transversal to $\Lambda$. 
Suppose $\Lambda$ is parametrized by $Y \subseteq \rr d$ and $F \in \M_{d \times d}(\ro)$ as in \eqref{eq:Lagrangian}.  
A distribution $u \in \cS'(\rr d)$ satisfies $u \in I_\Gamma^m(\rr d,\Lambda)$ if and only if
for any $g \in \cS(\rr d)\setminus 0$ and for any $k,N \in \mathbb{N}$ we have
\begin{equation}
\label{eq:lagrangianchar}
\begin{aligned}
\left| L_1 \cdots L_k \cT^\Lambda_g u (x,\xi) \right |
& \lesssim \left( 1 + \dist((x,\xi),V) \right)^{m-k} \left( 1 + \dist((x,\xi),\Lambda) \right)^{-N}, \\
& \qquad (x,\xi) \in T^* \rr d, 
\end{aligned}
\end{equation}
where
\begin{equation}\label{eq:TgLdef}
\cT_g^\Lambda u(x,\xi) = e^{-i \left( \la \pi_{Y^\perp} x, \xi \ra + \frac{1}{2} \la x, Fx\ra \right)} \cT_g u (x, \xi), \quad (x,\xi) \in T^* \rr d, 
\end{equation}
and $L_j =\la b_j, \nabla_{x,\xi} \ra$ are first order differential operators with $b_j \in \Lambda$, $j=1,\dots,k$. 
\end{prop}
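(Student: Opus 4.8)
The plan is to reduce the statement to the already-established conormal characterization in Definition~\ref{def:Gconormal} via the metaplectic conjugation $\mu(\chi_F)$. Write $u = \mu(\chi_F) v$ with $v \in I_\Gamma^m(\rr d, Y)$, which is the definition of $u \in I_\Gamma^m(\rr d,\Lambda)$. First I would compute how the twisted FBI transform $\cT_g^\Lambda$ of $u$ relates to the twisted FBI transform $\cT_g^Y$ of $v$. By Lemma~\ref{lem:FBIsymplectic} applied to $\chi_F$ (whose inverse is $\chi_{-F}$, and which acts on $T^*\rr d$ by $(x,\xi)\mapsto (x,\xi+Fx)$) we have
\begin{equation*}
\cT_{\mu(\chi_F)g}(\mu(\chi_F)v)(x,\xi) = e^{\frac{i}{2}\la Fx,x\ra}\cT_g v(x,\xi - Fx).
\end{equation*}
Multiplying by the phase factor $e^{-i(\la \pi_{Y^\perp}x,\xi\ra + \frac12\la x,Fx\ra)}$ defining $\cT_g^\Lambda$ and using $\pi_{Y^\perp}x \perp Fx$ (since $Y^\perp \subseteq \Ker F$, our standing assumption) to rewrite $\la \pi_{Y^\perp}x,\xi\ra = \la \pi_{Y^\perp}x,\xi-Fx\ra$, the quadratic phases should cancel, yielding $\cT_{\mu(\chi_F)g}^\Lambda(u)(x,\xi) = \cT_g^Y v(x,\xi-Fx)$. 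This is precisely the computation carried out inside the proof of Proposition~\ref{prop:chirpinvariance}, and I would reuse it.

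Next I would translate the differential operators. The operators $L_j = \la b_j,\nabla_{x,\xi}\ra$ with $b_j \in \Lambda$ should be matched, under the affine change of variables $(x,\xi)\mapsto(x,\xi-Fx)$, with operators $\la b_j',\nabla\ra$ where $b_j' = \chi_F^{-1}b_j \in Y\times Y^\perp = N(Y)$; concretely, if $b_j = (a_j, Fa_j + c_j)$ with $a_j \in Y$, $c_j \in Y^\perp$, then the pullback of the vector field is $(a_j,c_j) \in N(Y)$. So the left-hand side of \eqref{eq:lagrangianchar} evaluated at $(x,\xi)$ becomes exactly $|L_1'\cdots L_k' \cT_g^Y v|$ evaluated at $(x,\xi-Fx)$, with $L_j'$ admissible operators for the conormal characterization \eqref{eq:conormchar} of $v$.

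Then I would handle the distance functions. Set $V' = \chi_F^{-1}V$, a subspace transversal to $Y\times Y^\perp = \chi_F^{-1}\Lambda$, and note $\dist((x,\xi),\Lambda) \asymp \dist((x,\xi-Fx),N(Y))$ and $\dist((x,\xi),V)\asymp \dist((x,\xi-Fx),V')$, since $\chi_F$ is a fixed invertible linear map (equivalence of distances under a linear isomorphism, with constants depending on $\|\chi_F\|,\|\chi_F^{-1}\|$); this mirrors the Peetre-inequality estimates already done in the proof of Proposition~\ref{prop:chirpinvariance}. Applying Definition~\ref{def:Gconormal} to $v \in I_\Gamma^m(\rr d,Y)$ with the transversal subspace $V'$ then gives \eqref{eq:lagrangianchar}, and conversely the same equivalences run backwards: the estimates \eqref{eq:lagrangianchar} for $u$ force the conormal estimates \eqref{eq:conormchar} for $v$, hence $v \in I_\Gamma^m(\rr d,Y)$ and $u \in I_\Gamma^m(\rr d,\Lambda)$. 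One should also remark that the characterization \eqref{eq:conormchar} in Definition~\ref{def:Gconormal} is stated for the transversal $N(Y^\perp)$ but is known (by the results of \cite{Cappiello2}) to hold with any transversal subspace $V'$; alternatively one fixes $V = \chi_F N(Y^\perp)$ and notes the general-$V$ version follows from the transversality-independence already in \cite{Cappiello2}.

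The main obstacle is bookkeeping rather than conceptual: one must carefully verify the cancellation of the two quadratic phase terms in $\cT_g^\Lambda$ versus the phase $e^{\frac{i}{2}\la Fx,x\ra}$ produced by Lemma~\ref{lem:FBIsymplectic}, which works precisely because of the normalization $Y^\perp \subseteq \Ker F$ (so that $F = F_Y$ and $\la \pi_{Y^\perp}x,Fx\ra = 0$), and one must check that the space of admissible operators $\{\la b,\nabla\ra : b \in \Lambda\}$ corresponds exactly under $\chi_F$ to $\{\la b,\nabla\ra : b\in N(Y)\}$ — this is immediate from $\chi_F(Y\times Y^\perp)=\Lambda$ being the isomorphism \eqref{eq:chiFiso}. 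Once these two points are pinned down, the estimate comparison is routine and identical in spirit to Proposition~\ref{prop:chirpinvariance}.
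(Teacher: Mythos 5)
Your proof is correct and follows essentially the same route as the paper's: conjugate by $\mu(\chi_F)$ (with the normalization $Y^\perp \subseteq \Ker F$), use Lemma~\ref{lem:FBIsymplectic} to obtain $\cT_g^\Lambda u(x,\xi) = \cT_h^Y v(x,\xi-Fx)$ with $h = \mu(\chi_F)^{-1}g$ (your version with $\cT_{\mu(\chi_F)g}^\Lambda$ is the same up to relabelling the window), pull back the vector fields $b_j \in \Lambda$ to $\chi_F^{-1}b_j \in N(Y)$ under the linear change of variables, and compare distances via the fixed linear isomorphism $\chi_F$. The paper concludes by explicitly exhibiting $\chi_F(Y^\perp\times Y)$ as the transversal subspace corresponding to $N(Y^\perp)$, which your remark about transversality-independence from \cite{Cappiello2} covers as well.
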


\begin{proof}
Note that \eqref{eq:lagrangianchar} and \eqref{eq:TgLdef} reduce to \eqref{eq:conormchar} and \eqref{eq:TgYdef}, respectively, when $\Lambda = Y \times Y^\perp$.

We have $u \in I_\Gamma^m(\rr d,\Lambda)$ if and only if $u = \mu(\chi_F) v$ where $v \in I_\Gamma^m(\rr d,Y)$. 
By Proposition \ref{prop:chirpinvariance} we may assume $Y^\perp \subseteq \Ker F$.
Let $g \in \cS(\rr d)\setminus 0$ and set $h = \mu(\chi_F)^{-1} g \in \cS(\rr d)\setminus 0$. 
Lemma \ref{lem:FBIsymplectic} and \eqref{eq:TgYdef} give
\begin{align*}
\cT_g^\Lambda u (x,\xi) 
& = e^{-i \la \pi_{Y^\perp} x, \xi \ra} \cT_h v(x,-Fx + \xi) \\
& = \cT_h^Y v(x,-Fx + \xi). 
\end{align*}

A differential operator $L =\la Z, \nabla_{x,\xi} \ra$ with $Z \in \Lambda = \chi_F(Y \times Y^\perp)$ is of the form
\begin{equation*}
\la a, \nabla_x + F \nabla_\xi \ra + \la b, \nabla_\xi \ra
\end{equation*}
where $a \in Y$ and $b \in Y^\perp$. 
This operator acts as 
\begin{align*}
& \left( \la a, \nabla_x + F \nabla_\xi \ra + \la b, \nabla_\xi \ra \right) ( \cT_g^\Lambda u (x,\xi) ) \\
& = \left( \left( \la a, \nabla_1 - F \nabla_2 + F \nabla_2 \ra  + \la b, \nabla_2 \ra \right) \cT_h^Y v \right) (x,-Fx + \xi) \\
& = \left( \la (a,b), \nabla_{1,2} \ra \cT_h^Y v \right) (x,-Fx + \xi). 
\end{align*}
The claim is now a consequence of Definition \ref{def:Gconormal}. 
To wit, 
\begin{align*}
\dist((x,-Fx+\xi), N(Y)) & = \dist( \chi_{-F}(x,\xi), Y \times Y^\perp) 
\asymp \dist((x,\xi), \Lambda), \\
\dist((x,-Fx+\xi), N(Y^\perp)) & \asymp \dist( (x,\xi), \chi_{F}( Y^\perp \times Y) ), 
\end{align*}
and $\chi_{F}( Y^\perp \times Y) \subseteq T^* \rr d$ is transversal to $\Lambda$. 
\end{proof}

\section{Kernels of FIOs and $\Gamma$-Lagrangian distributions}\label{sec:kernelLagrangian}

In this section we prove that the kernels of FIOs associated to $\chi \in \Sp(d,\ro)$ are the $\Gamma$-Lagrangian distributions associated with the twisted graph Lagrangian $\Lambda_\chi'$. 
\begin{lem}\label{lem:metaplectictensor}
If $\chi \in \Sp(d,\ro)$ then there exists $\theta \in \ro$ such that 
\begin{equation*}
\mu(\chi) \otimes \mathrm{id} = e^{i \theta} \mu(\chi_2)
\end{equation*}
where $\chi_2 \in \Sp(2d,\ro)$ is defined by
\begin{align*}
\chi_2(x,\xi) & = \left( \chi(x_1,\xi_1)_1, x_2, \chi(x_1,\xi_1)_2, \xi_2 \right), \\
& \qquad \qquad x=(x_1,x_2) \in \rr {2d}, \quad \xi=(\xi_1,\xi_2) \in \rr {2d}. 
\end{align*}
\end{lem}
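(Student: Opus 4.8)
The plan is to identify $\mu(\chi)\otimes\mathrm{id}$ by means of the characterizing conjugation property \eqref{eq:metaplecticoperator} of metaplectic operators, using that an operator satisfying \eqref{eq:metaplecticoperator} for a given symplectic matrix is unitary and determined up to a factor of modulus one.

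First I would check that $\chi_2\in\Sp(2d,\ro)$. Writing a point of $T^*\rr{2d}$ as $(x,\xi)=(x_1,x_2,\xi_1,\xi_2)$ with $x_j,\xi_j\in\rr d$, the canonical symplectic form \eqref{eq:cansympform} splits as
\begin{equation*}
\sigma\big((x,\xi),(x',\xi')\big) = \big( \la x_1',\xi_1\ra - \la x_1,\xi_1'\ra \big) + \big( \la x_2',\xi_2\ra - \la x_2,\xi_2'\ra \big),
\end{equation*}
so the coordinate subspaces carrying $(x_1,\xi_1)$ and $(x_2,\xi_2)$ are symplectically orthogonal copies of $(T^*\rr d,\sigma)$. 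Since $\chi_2$ acts as $\chi$ on the first and as the identity on the second, it leaves $\sigma$ invariant, hence $\chi_2\in\Sp(2d,\ro)$.

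Next, $U:=\mu(\chi)\otimes\mathrm{id}$ is unitary on $L^2(\rr{2d})\cong L^2(\rr d)\otimes L^2(\rr d)$ and a homeomorphism on $\cS(\rr{2d})$ and on $\cS'(\rr{2d})$. I would then verify that it obeys the conjugation relation attached to $\chi_2$, namely
\begin{equation*}
U^{-1} A^w(x,D) \, U = (A\circ\chi_2)^w(x,D), \qquad A\in\cS'(\rr{4d}).
\end{equation*}
Both sides are weak-$*$ continuous in $A$, so it suffices to treat tensor symbols $A=a\otimes b$ with $a,b\in\cS'(\rr{2d})$, where $(a\otimes b)(x,\xi):=a(x_1,\xi_1)\,b(x_2,\xi_2)$, whose span is weak-$*$ dense. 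From the Weyl integral formula, in which both the phase $\la x-y,\xi\ra$ and the symbol factor over the two groups of variables, one gets $(a\otimes b)^w(x,D)=a^w(x_1,D_1)\otimes b^w(x_2,D_2)$. Hence, using \eqref{eq:metaplecticoperator} for $\mu(\chi)$,
\begin{equation*}
U^{-1}\big(a^w(x_1,D_1)\otimes b^w(x_2,D_2)\big)\,U = (a\circ\chi)^w(x_1,D_1)\otimes b^w(x_2,D_2),
\end{equation*}
and the right-hand side equals $\big((a\circ\chi)\otimes b\big)^w(x,D)$. Since $\big((a\circ\chi)\otimes b\big)(x,\xi)=a\big(\chi(x_1,\xi_1)\big)\,b(x_2,\xi_2)=A(\chi_2(x,\xi))$, this is precisely $(A\circ\chi_2)^w(x,D)$, which proves the relation for tensor symbols and hence, by density, for all $A$.

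Finally, $U$ is a unitary operator satisfying \eqref{eq:metaplecticoperator} for $\chi_2\in\Sp(2d,\ro)$; since such operators are determined up to a unimodular constant (equivalently, $\mu(\chi_2)^{-1}U$ commutes with every $A^w(x,D)$, hence with all translations $T_{x_0}$ and modulations $M_{\xi_0}$ on $\rr{2d}$, hence is scalar by the Stone--von Neumann theorem), we obtain $U=e^{i\theta}\mu(\chi_2)$ for some $\theta\in\ro$. I do not expect a genuine obstacle; the only points needing care are the bookkeeping with the reordering of the four groups of variables in $\rr{2d}$ and the routine justification that testing the intertwining relation on the dense family of tensor symbols suffices — which, if preferred, can be replaced by a direct computation on the Heisenberg generators $T_{x_0}M_{\xi_0}$, since these factor as tensor products in the coordinates $(x_1,\xi_1,x_2,\xi_2)$.
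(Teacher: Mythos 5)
Your proof is correct and follows essentially the same strategy as the paper's: establish the intertwining relation $U^{-1}A^w(x,D)\,U=(A\circ\chi_2)^w(x,D)$ for $U=\mu(\chi)\otimes\mathrm{id}$, check $\chi_2\in\Sp(2d,\ro)$, and conclude $U=e^{i\theta}\mu(\chi_2)$ from the fact that a unitary satisfying \eqref{eq:metaplecticoperator} for a given symplectic matrix is determined up to a unimodular constant. The only difference is technical: the paper verifies the intertwining relation by computing the Wigner distribution of tensor test functions $W(h\otimes q,\,f\otimes g)$ and invoking tensor density in $\cS(\rr{2d})$, while you test the relation on tensor symbols $A=a\otimes b$ using the factorization $(a\otimes b)^w=a^w\otimes b^w$ and weak-$*$ density; both routes reduce immediately to the known relation \eqref{eq:metaplecticoperator} for $\mu(\chi)$.
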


\begin{proof}
Let $f,g,h,q \in \cS(\rr d)$. 
From \eqref{eq:wignerweyl}, \eqref{eq:wignerdistribution} and \eqref{eq:metaplecticoperator} we obtain 
\begin{align*}
W(\mu(\chi) h \otimes q, \mu(\chi) f \otimes g)(x,\xi)
& = W(\mu(\chi) h , \mu(\chi) f)(x_1,\xi_1) \, W(q,g)(x_2,\xi_2) \\
& = W(h , f)(\chi^{-1} (x_1,\xi_1)) \, W(q,g)(x_2,\xi_2).
\end{align*}
Again using \eqref{eq:wignerweyl} this gives for $a \in \cS(\rr {4d})$
\begin{align*}
& \left( (\mu(\chi)^{-1} \otimes \mathrm{id} ) a^w(x,D) (\mu(\chi) \otimes \mathrm{id})(f \otimes g), h \otimes q \right) \\
& \qquad \qquad = \left( a^w(x,D) (\mu(\chi) f \otimes g), \mu(\chi) h \otimes q \right) \\
& \qquad \qquad = (2 \pi)^{-d} (a, W(\mu(\chi) h \otimes q, \mu(\chi) f \otimes g) ) \\
& \qquad \qquad = (2 \pi)^{-d} (b, W(h \otimes q, f \otimes g) ) \\
& \qquad \qquad = (b^w(x,D) (f \otimes g) , h \otimes q) 
\end{align*}
where 
\begin{align*}
b(x,\xi) & = a \left( \chi(x_1,\xi_1)_1, x_2, \chi(x_1,\xi_1)_2, \xi_2 \right) \\
& = a(\chi_2(x,\xi)). 
\end{align*}

Appealing to \cite[Theorem~51.6]{Treves1} we have thus shown 
\begin{equation}\label{eq:metaplectictensor}
(\mu(\chi)^{-1} \otimes \mathrm{id} ) \, a^w(x,D) \, (\mu(\chi) \otimes \mathrm{id})
= (a \circ \chi_2)^w(x,D).
\end{equation}
The following calculation for $(x,\xi), (y,\eta) \in T^* \rr {2d}$ shows that $\chi_2 \in \Sp(2d,\ro)$. 
\begin{align*}
& \sigma \left( \chi_2(x,\xi), \chi_2(y,\eta) \right) \\
& = \sigma \left( (\chi(x_1,\xi_1)_1, x_2, \chi(x_1,\xi_1)_2, \xi_2) , ( \chi(y_1,\eta_1)_1, y_2, \chi(y_1,\eta_1)_2, \eta_2 )\right) \\
& = \la \chi(x_1,\xi_1)_2, \chi(y_1,\eta_1)_1 \ra + \la \xi_2, y_2\ra - \la \chi(x_1,\xi_1)_1, \chi(y_1,\eta_1)_2 \ra - \la x_2, \eta_2 \ra \\
& = \sigma(\chi(x_1,\xi_1), \chi(y_1,\eta_1)) + \sigma(x_2,\xi_2, y_2, \eta_2) \\
& = \sigma(x_1,\xi_1, y_1,\eta_1) + \sigma(x_2,\xi_2, y_2, \eta_2) \\
& = \la y_1, \xi_1 \ra - \la x_1, \eta_1 \ra + \la y_2, \xi_2 \ra - \la x_2, \eta_2 \ra \\
& = \sigma( (x,\xi), (y, \eta) ).
\end{align*}

Combining \eqref{eq:metaplectictensor} with \eqref{eq:wignerweyl} and \eqref{eq:metaplecticoperator} we get 
\begin{equation*}
W( (\mu(\chi) \otimes \mathrm{id}) g, (\mu(\chi) \otimes \mathrm{id}) f) 
= W( \mu(\chi_2) g, \mu(\chi_2) f)
\end{equation*}
for all $f,g \in \cS(\rr {2d})$ which implies 
$\mu(\chi) \otimes \mathrm{id} = e^{i \theta} \mu(\chi_2)$ for some $\theta \in \ro$. 
\end{proof}

\begin{thm}\label{thm:FIOkernelLagr}
If $\chi \in \Sp(d,\ro)$ then 
\begin{equation*}
K^m(\chi) = I_\Gamma^m(\rr {2d}, \Lambda_\chi'). 
\end{equation*}
\end{thm}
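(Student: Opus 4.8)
The plan is to combine the factorization of Fourier integral operators from Theorem~\ref{thm:repFIO} with the description (\cite[Example~5.2]{Cappiello2}) of the kernels of Shubin pseudodifferential operators as the $\Gamma$-conormal distributions with respect to the diagonal, the tensor identity of Lemma~\ref{lem:metaplectictensor}, and the characterization of $\Gamma$-Lagrangian distributions in Corollary~\ref{cor:Lagmeta}. Recall that $N(\Delta) = \Lambda_I'$, so that \cite[Example~5.2]{Cappiello2} reads: the set $\{K_b : b \in \Gamma^m(\rr {2d})\}$ of Schwartz kernels \eqref{eq:schwartzkernelpseudo} of Weyl operators $b^w(x,D)$ equals $K^m(I) = I_\Gamma^m(\rr {2d}, N(\Delta))$. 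As a first step I would use Theorem~\ref{thm:repFIO} --- its second factorization, together with the fact that $b \mapsto b \circ \chi$ is a bijection of $\Gamma^m(\rr {2d})$ --- to write every operator in $\cI^m(\chi)$ as $\mu(\chi)\, b^w(x,D)$ with $b \in \Gamma^m(\rr {2d})$, and conversely. Thus $K^m(\chi)$ is exactly the set of Schwartz kernels of the operators $\mu(\chi)\, b^w(x,D)$, $b \in \Gamma^m(\rr {2d})$.

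The second step transports this composition to the level of kernels. Left composition of an operator with kernel $K \in \cS'(\rr {2d})$ by a continuous operator $\cA$ on $\cS(\rr d)$ produces the operator whose kernel is obtained by letting $\cA$ act in the first variable of $K$; applied here, the kernel of $\mu(\chi)\, b^w(x,D)$ equals $(\mu(\chi) \otimes \mathrm{id})\, K_b$. By Lemma~\ref{lem:metaplectictensor} this is $e^{i\theta}\, \mu(\chi_2)\, K_b$ for some $\theta \in \ro$ depending only on the chosen representative $\mu(\chi)$, with $\chi_2 \in \Sp(2d,\ro)$ the symplectic matrix introduced there. Since $\{K_b : b \in \Gamma^m(\rr {2d})\} = I_\Gamma^m(\rr {2d}, N(\Delta))$ is a vector space, the unimodular factor $e^{i\theta}$ is immaterial and we obtain
\begin{equation*}
K^m(\chi) = \mu(\chi_2)\, I_\Gamma^m(\rr {2d}, N(\Delta)).
\end{equation*}

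The third step identifies the right-hand side with $I_\Gamma^m(\rr {2d}, \Lambda_\chi')$, and rests on the linear-algebra identity $\chi_2\, N(\Delta) = \Lambda_\chi'$. Writing a point of $N(\Delta)$ as $(a,a,b,-b)$ with $a,b \in \rr d$ and applying $\chi_2$ gives $(\chi(a,b)_1,\, a,\, \chi(a,b)_2,\, -b)$, which by \eqref{eq:twistedgraphlagrangian} is the general element of $\Lambda_\chi'$ (take $y=a$, $\eta=b$); running the computation backwards yields the reverse inclusion. Hence $\chi_2$ carries the Lagrangian $N(\Delta) = \Lambda_I'$ onto $\Lambda_\chi'$. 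Since $\mu(\chi_2) \in \cI^0(\chi_2)$ by Proposition~\ref{prop:metaplectic}, Theorem~\ref{thm:FIOonLag} applied to $\mu(\chi_2)$ and to $\mu(\chi_2)^{-1} = \pm \mu(\chi_2^{-1}) \in \cI^0(\chi_2^{-1})$ shows that $\mu(\chi_2) : I_\Gamma^m(\rr {2d}, N(\Delta)) \to I_\Gamma^m(\rr {2d}, \chi_2 N(\Delta)) = I_\Gamma^m(\rr {2d}, \Lambda_\chi')$ is a bijection; alternatively, one invokes Corollary~\ref{cor:Lagmeta} directly, precomposing $\chi_2$ with an isomorphism $\rr {2d} \times \{0\} \to N(\Delta)$ coming from Lemma~\ref{lem:conormfactor} and \eqref{eq:chiFiso}. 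Combined with the displayed equality this gives $K^m(\chi) = I_\Gamma^m(\rr {2d}, \Lambda_\chi')$.

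I expect the main obstacle to be bookkeeping rather than hard analysis: correctly recognizing that left composition by $\mu(\chi)$ is the action of $\mu(\chi)$ in the first kernel variable, collapsing $\mu(\chi) \otimes \mathrm{id}$ into the single metaplectic operator $\mu(\chi_2)$ through Lemma~\ref{lem:metaplectictensor}, and keeping track of the unimodular and sign ambiguities of the metaplectic representation --- none of which affects membership in the vector spaces $K^m(\chi)$ and $I_\Gamma^m(\rr {2d}, \Lambda_\chi')$. The one substantive point, and the conceptual core of the theorem, is the identity $\chi_2\, N(\Delta) = \Lambda_\chi'$. An alternative proof, bypassing the factorization, would compare the FBI-transform characterizations of Theorem~\ref{thm:FIOkernelchar} and Proposition~\ref{prop:Lagrangianchar}, but that additionally requires matching the two twisting phase factors, i.e.\ computing the quadratic form parametrizing $\Lambda_\chi'$ in terms of the blocks of $\chi$.
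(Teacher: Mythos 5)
Your proposal is correct and follows essentially the same route as the paper: factorize via Theorem~\ref{thm:repFIO}, identify the kernel of $\mu(\chi)\,b^w(x,D)$ with $(\mu(\chi)\otimes\mathrm{id})K_b$, invoke Lemma~\ref{lem:metaplectictensor} to collapse this to $e^{i\theta}\mu(\chi_2)K_b$, and use the identity $\chi_2 N(\Delta) = \Lambda_\chi'$. The one cosmetic difference is your final step: you apply Theorem~\ref{thm:FIOonLag} to $\mu(\chi_2)$ and its inverse to get both inclusions at once, whereas the paper instead factors $K_b = \mu(\chi_\Delta)b_1$ via Corollary~\ref{cor:Lagmeta}, observes that $\chi_2\chi_\Delta : \rr{2d}\times\{0\}\to\Lambda_\chi'$ isomorphically, and reapplies Corollary~\ref{cor:Lagmeta} and its Remark~\ref{rem:Lagmeta} for each inclusion separately; your variant is slightly more economical but uses the same underlying ingredients.
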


\begin{proof}
First we prove $K^m(\chi) \subseteq I_\Gamma^m(\rr {2d}, \Lambda_\chi')$. 

Let $K_{\fy,a} \in K^m(\chi)$. 
By Theorem \ref{thm:repFIO} we have $\cK_{\fy,a} = \mu(\chi)b^w(x,D)$ for some $b \in \Gamma^m(\rr {2d})$. 
Define
\begin{equation*}
\chi_\Delta = 
\left(
\begin{array}{cccc}
I_d & 0 & 0 & 0 \\
I_d & 0 & 0 & I_d \\
0 & I_d & I_d & 0 \\
0 & -I_d & 0 & 0
\end{array}
\right) \in \M_{4d \times 4d} (\ro).
\end{equation*}
Then $\chi_\Delta \in \Sp(2d,\ro)$ and 
\begin{equation*}
\chi_\Delta: \rr {2d} \times \{ 0 \} \to \Delta \times \Delta^\perp
\end{equation*}
isomorphically, cf. \eqref{eq:diagonal}. 
The kernel of $b^w(x,D)$ is denoted $K_b$ (cf. \eqref{eq:schwartzkernelpseudo}), 
and $K_b \in I_{\Gamma}^m(\rr {2d},\Delta)$ (see \cite[Example~5.2]{Cappiello2}).
By Corollary \ref{cor:Lagmeta} and Remark \ref{rem:Lagmeta} we have $K_b = \mu(\chi_\Delta) b_1$ for some $b_1 \in \Gamma^m(\rr {2d})$. 

This gives for $f,g \in \cS(\rr d)$
\begin{align*}
(K_{\fy,a}, g\otimes \overline{f})
&= (\cK_{\fy,a} f,g) \\
&= (\mu(\chi)b^w(x,D) f,g) \\
&=(b^w(x,D) f,\mu(\chi)^{-1} g) \\
&=(K_b,\mu(\chi)^{-1} g\otimes \overline{f}) \\
&=(\mu(\chi_\Delta) b_1, \mu(\chi)^{-1} g \otimes \overline{f}) \\
&=((\mu(\chi) \otimes \mathrm{id}) \mu(\chi_\Delta) b_1, g\otimes\overline{f})
\end{align*}
and it follows
\begin{equation}\label{eq:fiokernel1}
K_{\fy,a} = (\mu(\chi) \otimes \mathrm{id}) \mu(\chi_\Delta) b_1. 
\end{equation}

By Lemma \ref{lem:metaplectictensor} we have 
\begin{equation}\label{eq:metaplectictensor2}
\mu(\chi) \otimes \mathrm{id} = e^{i \theta} \mu(\chi_2)
\end{equation}
where $\theta \in \ro$ and $\chi_2 \in \Sp(2d,\ro)$ is defined by
\begin{align*}
\chi_2(x,\xi) & = \left( \chi(x_1,\xi_1)_1, x_2, \chi(x_1,\xi_1)_2, \xi_2 \right), \\
& \qquad \qquad x=(x_1,x_2) \in \rr {2d}, \quad \xi=(\xi_1,\xi_2) \in \rr {2d}. 
\end{align*}
Insertion of \eqref{eq:metaplectictensor2} into \eqref{eq:fiokernel1} yields
\begin{equation}\label{eq:kernelFIOLagrangian}
K_{\fy,a} = \pm e^{i \theta} \mu(\chi_2 \chi_\Delta) b_1. 
\end{equation}

For $(x,\xi) \in \rr {2d}$ we obtain 
\begin{align*}
\chi_2 \chi_\Delta(x,\xi,0,0) 
& = \chi_2( x,x,\xi,-\xi ) \\
& = \left( \chi(x,\xi)_1, x, \chi(x,\xi)_2, -\xi \right)
\end{align*}
and it follows that 
\begin{equation*}
\chi_2 \chi_\Delta: \rr {2d} \times \{ 0 \} \to \Lambda_\chi'
\end{equation*}
isomorphically. 
Again appealing to Corollary \ref{cor:Lagmeta} we may conclude from 
\eqref{eq:kernelFIOLagrangian} that 
$K_{\fy,a} \in I_\Gamma^m(\rr {2d}, \Lambda_\chi')$. This proves $K^m(\chi) \subseteq I_\Gamma^m(\rr {2d}, \Lambda_\chi')$. 

It remains to show the opposite inclusion $I_\Gamma^m(\rr {2d}, \Lambda_\chi') \subseteq K^m(\chi)$. 
Let $K \in I_\Gamma^m(\rr {2d}, \Lambda_\chi')$, and denote by $\cK$ the operator with kernel $K$. 
By Corollary \ref{cor:Lagmeta} and Remark \ref{rem:Lagmeta} we have $K = \mu(\chi_2 \chi_\Delta) b$ for some $b \in \Gamma^m(\rr {2d})$, 
and $K_1 = \mu(\chi_\Delta) b \in I_{\Gamma}^m(\rr {2d},\Delta)$. 
If we denote by $\cK_1$ the operator with kernel $K_1$ then  
\cite[Example~5.2]{Cappiello2} shows that $\cK_1 = a^w(x,D)$ with $a \in \Gamma^m(\rr{2d})$. 

Using Lemma \ref{lem:metaplectictensor} we obtain for $f,g \in \cS(\rr d)$
\begin{align*}
(\cK f, g)
& = (\mu(\chi_2 \chi_\Delta) b, g \otimes \overline{f} ) \\
& = \pm e^{-i \theta} ((\mu(\chi) \otimes \mathrm{id}) K_1, g \otimes \overline{f} ) \\
& = \pm e^{-i \theta}  ( K_1, \mu(\chi) ^{-1} g \otimes \overline{f} ) \\
& = \pm e^{-i \theta}  ( a^w(x,D) f, \mu(\chi) ^{-1} g) \\
& = \pm e^{-i \theta}  ( \mu(\chi) a^w(x,D) f, g)
\end{align*}
and it follows that $\cK = \mu(\chi) a^w(x,D)$ with $a \in \Gamma^m(\rr{2d})$. 
By Theorem \ref{thm:repFIO} 
this means that $K \in K^m(\chi)$
which proves $I_\Gamma^m(\rr {2d}, \Lambda_\chi') \subseteq K^m(\chi)$. 
\end{proof}

\section*{Acknowledgements}

We are grateful to professor Fabio Nicola for helpful discussions. R. Schulz gratefully acknowledges support of the project ``Fourier Integral Operators, symplectic geometry and analysis on noncompact manifolds'' received by the University of Turin in form of an ``I@Unito'' fellowship as well as institutional support by the university of Hannover.


\end{document}